 \newcounter{spec}
{\end{list}}
\renewcommand{\P}{{\mathbf P}}
\newcommand{\sD}{{\mathcal D}}
\newcommand{\N}{{\mathbb N}}
\newcommand{\Z}{{\mathbb Z}}
\newcommand{\Q}{{\mathbb Q}}
\newcommand{\C}{{\mathbb C}}
\renewcommand{\L}{{\mathbb L}}
\newcommand{\oi}{\hskip1mm {\buildrel \simeq \over \rightarrow} \hskip1mm}
\newcommand{\Br}{{\operatorname{Br}}}
\newcommand{\Ker}{{\operatorname{Ker}}}
\newcommand{\Coker}{{\operatorname{Coker}}}
\newcommand{\Hom}{{\operatorname{Hom}}}
\newcommand{\Spec}{{\operatorname{Spec \ }}}
\renewcommand{\div}{{\operatorname{Div}}}
\newcommand{\tors}{{\operatorname{tors}}}
\renewcommand{\lim}{\varprojlim}
\renewcommand{\phi}{\varphi}
\numberwithin{equation}{section}
\newfont{\gothic}{eufb10}
\renewcommand{\qed}{{\hfill$\square$}}
\newtheorem{theo}{Th\'{e}or\`{e}me}[section]
\newtheorem{prop}[theo]{Proposition}
\newtheorem{lem}[theo]{Lemme}
\newtheorem{cor}[theo]{Corollaire}
\theoremstyle{definition}
\newtheorem{defi}[theo]{D\'efinition}
\theoremstyle{remark}
\newtheorem{rema}[theo]{Remarque}
\newtheorem{ex}[theo]{Exemple}
\newtheorem{remas}[theo]{Remarques}
\newcommand{\bthe}{\begin{theo}}
\newcommand{\ble}{\begin{lem}}
\newcommand{\bpr}{\begin{prop}}
\newcommand{\bco}{\begin{cor}}
\newcommand{\bde}{\begin{defi}}
\newcommand{\ethe}{\end{theo}}
\newcommand{\ele}{\end{lem}}
\newcommand{\epr}{\end{prop}}
\newcommand{\eco}{\end{cor}}
\newcommand{\ede}{\end{defi}}
\newcommand{\Gal}{{\rm{Gal}}}
\newcommand{\Pic}{\operatorname{Pic}}
\DeclareFontFamily{U}{wncy}{}
\DeclareFontShape{U}{wncy}{m}{n}{%
<5>wncyr5%
<6>wncyr6%
<7>wncyr7%
<8>wncyr8%
<9>wncyr9%
<10>wncyr10%
<11>wncyr10%
<12>wncyr6%
<14>wncyr7%
<17>wncyr8%
<20>wncyr10%
<25>wncyr10}{}
\DeclareMathAlphabet{\cyr}{U}{wncy}{m}{n}
\def\A{{\mathbb A}}
\def\G{{\mathbb G}}
\def \M{{\mathcal M}}
\def \spec {{\rm Spec}\, }
\def \ov{\overline}
\def \fraco{{\rm Frac \,}}
\def \Ga{{\Gamma}}
\def \ext{{\rm Ext}}
\def \calo{{\mathcal O}}
\def\calf{{\mathcal F}}
\def \calt{{\mathcal T}}
\def\T{{\mathcal T}}
\def \Sha{{\cyr{X}}}
\def \cyc{{\rm cyc}}
\begin{document}
%-----------------------------------------------------------------------%

\title[Dualit\'e pour les courbes sur $\C((t))$]
 {Dualit\'e et principe local-global pour les tores sur une courbe au-dessus de  $\C((t))$ }
\author{Jean-Louis Colliot-Th\'el\`ene}
\address{C.N.R.S., Universit\'e Paris Sud\\Math\'ematiques, B\^atiment 425\\91405 Orsay Cedex\\France}
\email{jlct@math.u-psud.fr}
\author{David Harari}
\address{Universit\'e Paris Sud\\Math\'ematiques, B\^atiment 425\\91405 Orsay Cedex\\France}
\email{David.Harari@math.u-psud.fr}
\date{16 avril 2014}
\maketitle

 \begin{abstract}
 Pour  $K$ un corps global (corps de nombres ou corps de fonctions d'une variable
sur un corps fini $F$),  on dispose de th\'eor\`emes de dualit\'e
classiques  (Tate, Poitou, Nakayama) pour la cohomologie galoisienne \`a valeurs dans des
tores et des modules ab\'eliens finis. 

Nous \'etablissons de tels th\'eor\`emes pour  $K$  le   corps des fonctions
d'une courbe projective et lisse sur le  corps $F=\C((t))$ des s\'eries formelles en une variable sur 
le corps des complexes. Cela permet de contr\^oler le d\'efaut du principe
de Hasse et l'approximation faible  pour  les espaces homog\`enes sous un tore.

 Il y a ici des diff\'erences avec le cas classique ($K$ corps global), et aussi avec
 le cas r\'ecemment \'etudi\'e o\`u $K$ est un corps de fonctions d'une variable sur un
 corps $p$-adique. Par exemple, la $K$-rationalit\'e d'un  tore n'implique pas ici
 la validit\'e du principe de Hasse pour  ses espaces principaux homog\`enes.
  \end{abstract}

\begin{altabstract}
Over  a global field $K$ (number field, or function field of a curve
over a finite field $F$), arithmetic duality theorems for the
Galois cohomology of tori and finite Galois modules have long been known.
More recent work investigates the case where $K$ is the   function fields of of a curve  over
a $p$-adic field.

For $K$ the function field of a curve over the formal series  field $F=\C((t))$, we
establish analogous duality theorems. We thus control the obstruction
to the local-global principle and to weak approximation for homogeneous
spaces of tori.

There are differences with the
afore described cases. For example the Hasse principle need not hold
for principal homogeneous spaces of a $K$-rational torus.
\end{altabstract}

\section*{Introduction}

Soient $k$ le corps des fractions d'un anneau de valuation discr\`ete, de corps r\'esiduel $\kappa$,  et
 $K$ le corps des fonctions d'une courbe $C$ projective, lisse, g\'eom\'etriquement int\`egre  sur $k$. Dans plusieurs travaux r\'ecents,
 on s'est int\'eress\'e au principle local-global pour les points rationnels des espaces
 principaux homog\`enes (torseurs) sous un $k$-groupe lin\'eaire $G$.
 Citons ici les travaux de Harbater, Hartmann et Krashen \cite{HHK1, HHK2, HHK3}, 
 ceux de Parimala, Suresh et l'un des auteurs du pr\'esent article \cite{CTPaSu}, et ceux de 
 l'autre auteur avec Szamuely \cite{HaSz} et avec Scheiderer et Szamuely \cite{HaSchSz}.
 
 On peut s'int\'eresser au principe local-global pour au moins deux  familles de compl\'etions $K_{v}$
 de $K$ par rapport \`a des valuations discr\`etes de rang 1.
  
 (i) Les compl\'etions par rapport aux valuations triviales sur $k$, c'est-\`a-dire les valuations correspondant
 aux  points ferm\'es de la $k$-courbe $C$.
 
 (ii) Les compl\'etions  par rapport \`a toutes les valuations sur $K$, y compris celles qui induisent la valuation
 du corps complet $k$.
 
 Harbater, Hartmann et Krashen ont montr\'e que si le $K$-groupe $G$ est connexe et $K$-rationnel,
 alors  dans le cas (ii) on a un principe local-global pour les torseurs sous $G$.

Supposons d\'esormais $\kappa=\C$ et $k=\C((t))$ (plus g\'en\'eralement 
tous les r\'esultats 
de cet article sont valables en rempla\c cant $\C$ par un corps 
alg\'ebriquement clos de caract\'eristique z\'ero).

 Dans ce cas $K$ est un corps de dimension cohomologique 2
 et pr\'esente des analogies
avec les corps  globaux de caract\'eristique positive, i.e. avec les corps de fonctions d'une variable sur un corps fini. 
Une diff\'erence importante est l'absence d'un th\'eor\`eme de Tchebotarev sur un tel corps.
Ainsi  un \'el\'ement de $K$ peut par exemple  \^etre un carr\'e dans tous les compl\'et\'es $K_{v}$ pour $v\in \Omega$
sans \^etre un carr\'e dans $K$.

Dans le cas (i), {\it le principe local-global ne vaut pas en g\'en\'eral
 pour les points rationnels des torseurs sous un $K$-tore $K$-rationnel} 
(voir l'exemple~\ref{contreexrationnel})  
 \`a la diff\'erence de ce qui se passe sur un corps de nombres 
(Voskresenski\v {\i}, voir \cite{sansuc}, Cor. 9.7)
ou sur 
le corps des fonctions d'une courbe $p$-adique (\cite{HaSz}, Cor. 5.7).
Autrement dit, avec les notations ci-dessous, on peut avoir 
$\Sha^1(K,T) \neq 0$ m\^eme pour un tore $T$ qui est $K$-rationnel.
 Dans \cite{CTPaSu}, on donne aussi des exemples 
 de $K$-tore alg\'ebrique  -- non $K$-rationnel  -- pour lequel le principe local-global  pour les points rationnels des torseurs ne vaut  pas dans le cas 
(ii).
 
 Au vu des r\'esultats classiques sur un corps  global comme ceux de 
Voskresenski\v {\i} (\cite{vosk}) et Sansuc (\cite{sansuc}), 
 on se demande alors :
 peut-on contr\^oler le d\'efaut du principe de Hasse
 (et aussi de l'approximation faible)  au moyen d'un 
accouplement cohomologique ?

Soit $\Omega$ l'ensemble des points ferm\'es de la $k$-courbe $C$.
Pour tout $K$-sch\'ema en groupes commutatif $G$, et tout entier $i \geq 0,$ on d\'efinit le groupe  
ab\'elien
$$\cyr{X}^{i}(K,G)= \Ker [H^{i}(K,G) \to \prod_{v \in \Omega} H^{i}(K_{v},G)].$$Si $K$ est sous-entendu, on abr\`egera parfois $\Sha^i(K,G)$ en 
$\Sha^i(G)$. 

Soit $\mu_n$ le groupe des racines $n$-i\`eme 
de l'unit\'e dans $k$, on note $\Q/\Z(-1)$ la limite inductive
des $\Z/n(-1):=\Hom(\mu_n,\Z/n)$. C'est un groupe ab\'elien 
isomorphe (non canoniquement) \`a $\Q/\Z$.

Les principaux r\'esultats de dualit\'e 
obtenus dans cet article sont rassembl\'es dans le th\'eor\`eme suivant,
\'etabli au \S 7.

\medskip

{\bf Th\'eor\`eme.} {\it Soient $T$ un $K$-tore 
et $\widehat{T}$ son groupe des caract\`eres.
Il existe des accouplements  de groupes de torsion
$$\cyr{X}^1(K,T) \times \cyr{X}^2(K,\widehat{T}) \to \Q/\Z(-1)$$
et
$$\cyr{X}^1(K,\widehat{T}) \times \cyr{X}^2(K,T) \to \Q/\Z(-1)$$
qui par passage au quotient par les sous-groupes divisibles maximaux
induisent des accouplements parfaits de groupes finis
$$\cyr{X}^1(K,T) \times \cyr{X}^2(K,\widehat{T})/{\rm Div} \to \Q/\Z(-1)$$
et 
$$\cyr{X}^1(K,\widehat{T}) \times \cyr{X}^2(K,T)/{\rm Div} \to \Q/\Z(-1).$$}

Il s'agit en principe d'adapter les arguments donn\'es dans \cite{HaSz}.
Dans \cite{HaSz}, on s'int\'eresse au cas o\`u $K$ est le corps des fonctions d'une courbe
sur un corps $p$-adique.
Dans ce cas-l\`a, il y  a un seul type de dualit\'e \`a \'etudier, \`a savoir
$$\cyr{X}^1(K,T) \times \cyr{X}^2(K,T') \to \Q/\Z,$$
o\`u $T'$ est le $K$-tore dual de $T$. En effet  les r\^oles de $T$ et
$T'$ sont interchangeables.
Ici il nous faut \'etudier deux accouplements distincts (comme dans le
cas des corps globaux).

\medskip

Une cons\'equence (Cor.~8.3) du th\'eor\`eme pr\'ec\'edent est que comme 
dans le cas des corps de nombres, {\it l'obstruction au principe de Hasse 
pour un espace principal homog\`ene $Y$ d'un $K$-tore est contr\^ol\'ee 
par un certain sous-groupe du groupe de Brauer alg\'ebrique $\Br_1 Y$}, 
constitu\'e des \'el\'ements dont la restriction \`a $\Br (Y \times_K K_v)$ 
est constante en toute place $v \in C^{(1)}$. 

On verra aussi (Proposition~\ref{contrexwa}) 
qu'\`a la diff\'erence du cas des corps globaux, {\it il existe  
des contre-exemples \`a l'approximation faible pour un tore
qui ne sont pas contr\^ol\'es par le groupe de Brauer non ramifi\'e du tore}.
Comme la dimension cohomologique de $K$ est 2,
  on ne peut ici esp\'erer utiliser comme substitut du groupe
de Brauer  le groupe $H^3$ non ramifi\'e,
comme le font Scheiderer,  Szamuely \ et le second auteur \cite{HaSchSz}
sur les  corps de fonctions d'une variable sur un corps $p$-adique.

N\'eanmoins on verra qu'{\it on a encore une suite exacte 
\`a la Voskresenski\v {\i}-Sansuc 
d\'ecrivant le d\'efaut d'approximation faible} (Th\'eor\`eme~\ref{vosksansuc}).

\medskip

Il resterait \`a comprendre s'il y a une relation entre les obstructions au principe
local-global par rapport aux places de $K$ triviales sur $k$, comme \'etudi\'ees
dans le pr\'esent article, et les obstructions sup\'erieures, par rapport \`a toutes
les valuations  de $K$, et faisant intervenir des lois de r\'eciprocit\'e aux points de codimension 2
sur les surfaces
arithm\'etiques, utilis\'ees dans \cite{CTPaSu}.

\medskip

{\bf Notations}

Soit $A$ un groupe ab\'elien (qu'on supposera toujours \'equip\'e 
de la topologie discr\`ete si on ne d\'efinit pas une autre topologie dessus). 
Pour $n>0$ entier on note ${}_{n}A \subset A$
le sous-groupe des \'el\'ements annul\'es par $n$. Pour $\ell$ premier, on note
$A\{l\} \subset A$ la torsion $\ell$-primaire de $A$. On note $A^{(\ell)}$  la limite 
projective des $A/{\ell}^nA$. On note $A^D=\Hom_c(A,\Q/Z(-1))$ le groupe des
homomorphismes continus de $A$ dans $\Q/\Z(-1)$.
Le foncteur $(.)^D$ est exact sur la cat\'egorie
des groupes discrets, et il r\'ealise de plus une anti-\'equivalence 
de cat\'egories entre les groupes discrets de torsion et les groupes 
profinis.

Dans tout ce texte, la cohomologie utilis\'ee est la cohomologie \'etale
(ou galoisienne si on travaille sur un corps). 
Soit $C$ une courbe projective et lisse sur un corps $k$ (dont on notera
$\bar k$ une cl\^oture alg\'ebrique); soient $U$ 
un ouvert de Zariski non vide de $C$ et $j_U : U \to C$ l'inclusion.
Pour tout faisceau \'etale  $\calf$ sur $U$,
on note $(j_U)_{!}F$ le prolongement de $\calf$ par z\'ero 
et pour tout $i\geq 0$ entier on note
$$H^{i}_{c}(U,\calf):= H^{i}(C,(j_U)_{!}\calf)$$
les groupes de cohomologie \'etale \`a support compact (cf. 
\cite{milneEC}, Prop. III.1.29).

\smallskip

{\bf Remerciements.} Les auteurs remercient le Centre Interfacultaire Bernoulli
de l'\'Ecole Polytechnique F\'ed\'erale de Lausanne pour son hospitalit\'e
\`a l'automne 2012, ainsi que R. Parimala et T. Szamuely pour 
d'int\'eressantes discussions. Pour ce travail, le premier auteur
a b\'en\'efici\'e d'une aide de l'Agence 
Nationale de la Recherche portant la r\'ef\'erence ANR-12-BL01-0005.

\section{Les corps}\label{lescorps}

Soient  $k=\C((t))$ et   $K=\C((t))(C)$  le corps des fonctions d'une courbe
$C$ projective, lisse, g\'eom\'etriquement connexe sur $k$.  
Dans la terminologie de \cite{Parimala}, ce sont
des corps (lg).

Pour tout point ferm\'e $v$
de $C$, le corps r\'esiduel est une extension finie de $\C((t))$, donc de la forme $\C((s))$
avec $s=t^{1/n}$ pour $n$ convenable, et le
 corps des fractions du compl\'et\'e de l'anneau local de $C$ en $v$
est isomorphe \`a $\C((s))((u))$. 

Une extension finie d'un corps $K=\C((t))(C)$ est de la forme $\C((t'))(C')$  pour 
$C'$ une courbe g\'eom\'etriquement connexe convenable sur un corps $\C((t'))$.

Une extension finie d'un corps $\C((s))((u))$ est de la forme $\C((s'))((u'))$.

Un corps $K$ de la forme $\C((t))(C)$ ou $\C((s))((u))$ a les propri\'et\'es suivantes.
Toute extension finie de 
$K$  est encore du m\^eme  type.
Le corps $K$ est  un corps $C_{2}$ -- puisque $\C((t))$ est un corps $C_{1}$.
  Le corps $K$  est de dimension cohomologique 2.
L'extension ab\'elienne maximale de $K$ est un corps $C_{1}$, donc de dimension cohomologique 1.
Ceci r\'esulte simplement du fait que la cl\^oture alg\'ebrique de $\C((t))$ est, d'apr\`es le th\'eor\`eme de
Puiseux, une extension ab\'elienne de $\C((t))$.

Pour toute alg\`ebre simple centrale sur $K$ , la norme r\'eduite est surjective : ceci r\'esulte du fait
que $K$ est un corps $C_{2}$.

{\it Pour toute alg\`ebre simple centrale $D$ sur $K$, indice et exposant de l'alg\`ebre $D$  co\"{\i}ncident.}
 Pour la partie 2-primaire et la partie 3-primaire, c'est une cons\'equence connue (M. Artin) de la propri\'et\'e $C_{2}$.    
 Pour le cas local, il suffit de renvoyer \`a \cite[Thm. 1.5]{CTGiPa}.
Pour le cas $\C((t))(C)$, comme le mentionne Parimala \cite{Parimala},  cela peut se voir soit  en utilisant la m\'ethode de scindage de
la ramification de Saltman \cite{Saltman}, soit 
 par la m\'ethode de recollement de corps de Harbater, Hartmann et Krashen
\cite[Thm. 5.5]{HHK1}.

\medskip

Soit $L/K$ une extension finie galoisienne de  $K=\C((t))(C)$. 
Ceci correspond \`a un rev\^etement fini \'eventuellement
ramifi\'e de courbes int\`egres r\'eguli\`eres $D \to C$ propres au-dessus du corps $k=\C((t))$. Pour $v \in C^{(1)}$
un point ferm\'e o\`u $D \to C$ est \'etale, et $w\in D^{(1)}$ point ferm\'e au-dessus de $v$, le groupe
de d\'ecomposition s'identifie au groupe de Galois des extensions de corps r\'esiduelles. 
Comme toute extension  finie de $\C((t))$, cette extension est cyclique. Ainsi presque 
tous les groupes de d\'ecomposition sont cycliques, comme dans le cas des extensions galoisiennes de corps globaux.

Par contre {\it  on n'a pas l'analogue du th\'eor\`eme de Tchebotarev} :
Pour une extension $L/K$ cyclique, il se peut qu'aucun groupe de d\'ecomposition ne soit \'egal
au groupe de Galois de $L/K$.
Voici un exemple : on prend une isog\'enie cyclique $D _{0}\to C_{0}$ de courbes elliptiques sur $\C$.
On \'etend la situation \`a $\C((t))$. Pour toute extension finie $\C((s))/\C((t)$, l'application
$D(\C((s))) \to C(\C((s)))$ est surjective. Ainsi tous les groupes de d\'ecomposition sont triviaux.

Des exemples similaires se trouvent dans \cite{rimwhap}.

\section{Cohomologie  \`a coefficients $\mu_{n}$ et $\G_{m}$}

On dit qu'un corps $k$ est {\it \`a cohomologie galoisienne finie}
si pour tout module galoisien fini $M$ et tout $i \geq 0$, le 
groupe $H^i(k,M)$ est fini.

\subsection{Cohomologie \'etale des courbes}

 Soient $k=\C((t))$ et $\overline{k}=\cup_{n}\C((t^{1/n}))$.
 Soit $\mathfrak{g}=\Gal(\overline{k}/k)$.
 On a un isomorphisme naturel $\mathfrak{g}=\widehat{\Z}(1)$.

\begin{prop}\label{cohocourbes}

Soit $C/k$ une courbe projective lisse g\'eom\'etriquement connexe de 
jacobienne $J$, et soit $K=k(C)$
son corps des fonctions. Soit $\overline{C}=C\times_{k}\overline{k}$.
On note $I(C)$ l'indice de $C$ (i.e. le pgcd des degr\'es sur $k$ des points
ferm\'es de $C$) de la courbe.

\smallskip

(i) Pour tout entier $n\geq 1$, et tout entier $i \in \Z$, on a des isomorphismes canoniques
$ H^2(\overline{C},\mu_{n}^{\otimes i}) = \mu_{n}^{\otimes i-1}$
et    $ H^2(\overline{C}, \Q/\Z(i))=\Q/\Z(i-1).$

(ii) Pour tout entier $n\geq 1$, et tout entier $i \in \Z$, on a des  isomorphismes canoniques $H^3(C,\mu_{n}^{\otimes i})= \mu_{n}^{\otimes i-2}$ et  $ H^{3}(C,\Q/\Z(i)) = \Q/\Z(i-2).$

(iii) Soit $U \subset C$, $U \neq C$ un ouvert de $C$. On a $H^3(U,\G_{m})=0$
et $H^3_{c}(U,\G_{m})=H^3(C,\G_{m})=\Q/\Z(-1)$. Pour tout entier $n \geq 1$ et tout $i \in \Z$
on a $H^3_{c}(U,\mu_{n}^{\otimes i})=H^3(C,\mu_{n}^{\otimes i})=\mu_{n}^{\otimes i-2}$, et $H^3(U,\mu_n^{\otimes i})=0$. De plus 
les groupes $H^r(U,\mu_{n}^{\otimes i})$ sont finis pour tout $r \geq 0$ 
et tout $i \in \Z$, et le groupe de Brauer $\Br C$ est un groupe 
divisible de type cofini, 
quotient de $H^1(k,J)$ par un groupe cyclique d'ordre $I(C)$.

(iv) Pour chaque place $v \in \Omega$, on a $\Br(K_{v}) \oi H^1(\kappa_{v},\Q/\Z) = \Q/\Z(-1)$.

(v) On a une suite exacte naturelle
$$0 \to \Br C \to \Br K \to \bigoplus_{v \in C^{1}} \Br K_{v} \to \Q/\Z(-1) \to 0.$$

\end{prop}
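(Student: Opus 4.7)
L'\'enonc\'e est une suite de calculs explicites de cohomologie \'etale sur $\overline{C}$, $C$ et $U$, reposant sur deux ingr\'edients structurels : par Puiseux, $\overline{k}$ est la cl\^oture alg\'ebrique de $k$ et $\mathfrak{g}=\Gal(\overline{k}/k)\cong\widehat{\Z}(1)$ est de dimension cohomologique $1$ ; par ailleurs $k$ est $C_1$, donc $\Br(k)=0$. Les outils principaux seront la suite spectrale de Hochschild--Serre $H^p(\mathfrak{g}, H^q(\overline{X},\mathcal{F}))\Rightarrow H^{p+q}(X,\mathcal{F})$, la suite spectrale de Leray pour $\pi:C\to\Spec k$, et la suite de localisation attach\'ee \`a $0\to j_!\G_m\to\G_m\to i_*\G_m\to 0$.

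Pour (i), sur $\overline{C}$, la suite de Kummer, la nullit\'e $\Br(\overline{C})=0$ (Tsen) et l'isomorphisme $\Pic(\overline{C})/n\cong\Z/n$ donnent $H^2(\overline{C},\mu_n)\cong\Z/n$ ; les torsions de Tate et le passage \`a la limite fournissent le reste. Pour (ii), dans Hochschild--Serre pour $C$ seuls les indices $p\in\{0,1\}$ et $q\in\{0,1,2\}$ contribuent, de sorte que le seul terme en degr\'e total $3$ est $E_2^{1,2}=H^1(\mathfrak{g},\mu_n^{\otimes i-1})$ ; l'isomorphisme canonique $\mathfrak{g}/n\cong\mu_n$ l'identifie \`a $\Hom(\mu_n,\mu_n^{\otimes i-1})\cong\mu_n^{\otimes i-2}$.

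Pour (iii), on commencera par \'etablir $H^3(U,\G_m)=0$ : la courbe affine lisse $\overline{U}$ sur $\overline{k}$ v\'erifie $H^{\geq 2}(\overline{U},\G_m)=0$, et $\cd(k)=1$ tue les contributions restantes dans Leray. La suite de localisation ci-dessus, combin\'ee \`a $\Br(\kappa_v)=0$ et $\cd(\kappa_v)=1$ (qui entra\^{\i}nent $H^{\geq 2}(\kappa_v,\G_m)=0$), donnera $H^3_c(U,\G_m)\cong H^3(C,\G_m)$. Pour identifier ce dernier groupe \`a $\Q/\Z(-1)$, on injectera l'extension $0\to J\to\Pic(\overline{C})\to\Z\to 0$ dans Leray pour $\pi$ ; la nullit\'e $H^p(k,J)=0$ pour $p\geq 2$ (cons\'equence de $\cd(k)=1$ via l'isog\'enie $[n]:J\to J$) r\'eduit alors $E_2^{2,1}$ \`a $H^2(k,\Z)=\Q/\Z(-1)$, qui survit \`a $E_\infty$. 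La m\^eme analyse au niveau $E_\infty^{1,1}$ identifie $\Br(C)=H^1(k,\Pic(\overline{C}))$ au quotient de $H^1(k,J)$ par l'image cyclique d'ordre $I(C)$ issue de la suite de degr\'e ; la divisibilit\'e et le type cofini se propagent depuis $H^1(k,J)$. Les variantes \`a coefficients $\mu_n^{\otimes i}$ et la finitude des $H^r(U,\mu_n^{\otimes i})$ s'obtiennent par les m\^emes suites spectrales, en utilisant la finitude de $H^*(\mathfrak{g},M)$ et $H^*(\overline{U},M)$ pour $M$ fini.

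Pour (iv), sur le corps local complet de dimension $2$ $K_v=\C((s))((u))$, le r\'esidu classique $\Br(K_v)\to H^1(\kappa_v,\Q/\Z)$ est surjectif de noyau $\Br(\calo_v)\subset\Br(\kappa_v)=0$, et $H^1(\kappa_v,\Q/\Z)\cong\Q/\Z(-1)$ car $\Gal(\overline{\kappa_v}/\kappa_v)\cong\widehat{\Z}(1)$. Pour (v), on invoquera la suite exacte \`a la Bloch--Ogus sur la courbe r\'eguli\`ere $C$,
$$0\to\Br(C)\to\Br(K)\xrightarrow{\oplus\partial_v}\bigoplus_v H^1(\kappa_v,\Q/\Z)\to H^3(C,\G_m)\to H^3(K,\G_m),$$
l'injectivit\'e de gauche \'etant celle d'Auslander--Goldman, le calcul de $H^3(C,\G_m)$ venant de (iii), et la nullit\'e $H^3(K,\G_m)=0$ d\'ecoulant de $\cd(K)=2$ (via Kummer, $H^3(K,\G_m)$ est \`a la fois divisible et sans torsion). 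L'obstacle principal sera le suivi soigneux des torsions de Tate dans les spectrales Hochschild--Serre, et la justification propre de $H^p(k,J)=0$ pour $p\geq 2$ lorsque $J$ est une vari\'et\'e ab\'elienne ; tout le reste est un assemblage m\'ecanique.
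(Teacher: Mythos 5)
Il y a un v\'eritable trou, localis\'e dans votre traitement de $H^3(U,\G_{m})=0$ au point (iii). Vous affirmez que $H^{q}(\overline{U},\G_{m})=0$ pour $q\geq 2$ et $\cd(k)=1$ \og tuent les contributions restantes \fg{} de la suite spectrale en degr\'e total $3$. Mais $\cd(k)\leq 1$ ne force la nullit\'e de $H^p(k,M)$ pour $p\geq 2$ que lorsque $M$ est un module de \emph{torsion}, et le terme survivant $E_2^{2,1}=H^2(k,\Pic\overline{U})$ a des coefficients a priori non de torsion. Votre propre calcul quelques lignes plus loin montre le danger : le m\^eme terme pour $C$, \`a savoir $H^2(k,\Pic\overline{C})=H^2(k,\Z)=\Q/\Z(-1)$, est non nul et c'est pr\'ecis\'ement lui qui donne $H^3(C,\G_{m})\neq 0$. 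L'argument tel quel prouve donc trop. L'ingr\'edient manquant (celui qu'utilise l'article) est structurel : pour $U\subsetneq C$, l'application degr\'e est surjective sur le sous-groupe de $\Pic\overline{C}$ engendr\'e par les points du bord, de sorte que $\Pic\overline{U}$ est extension d'un groupe fini par un groupe divisible ; comme la cohomologie galoisienne en degr\'e positif est de torsion, que $H^2(k,-)$ s'annule sur les modules finis ($\cd\leq 1$) et est uniquement divisible sur les modules divisibles, on obtient $H^2(k,\Pic\overline{U})=0$, donc $H^3(U,\G_{m})=0$. (Il faut aussi $E_2^{3,0}=H^3(k,\overline{k}[U]^{\times})=0$, ce qui r\'esulte de la divisibilit\'e de $\overline{k}^{\times}$ et de $\cd(k)\leq 1$ appliqu\'e au r\'eseau $\overline{k}[U]^{\times}/\overline{k}^{\times}$.)

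\`A ce point pr\`es, votre itin\'eraire co\"{\i}ncide avec celui de l'article : Kummer et Tsen pour (i) ; Hochschild--Serre avec l'identification $\mathfrak{g}\simeq\widehat{\Z}(1)$ pour (ii) et pour le suivi des torsions de Tate ; $H^2(k,J)=0$ par divisibilit\'e de $J$ et la suite du degr\'e pour le calcul de $H^3(C,\G_{m})$ et pour la description de $\Br C$ comme quotient de $H^1(k,J)$ par $\Z/I(C)$ ; nullit\'e de $H^{q}(\kappa_{v},\G_{m})$ pour $q\geq 2$ pour la comparaison $H^3_{c}(U,\G_{m})\simeq H^3(C,\G_{m})$ ; suite des r\'esidus pour (v). Votre justification de $H^3(K,\G_{m})=0$ (\`a la fois divisible et sans torsion via Kummer et $\cd(K)=2$) est un substitut correct \`a l'assertion non d\'etaill\'ee de l'article.
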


\begin{proof}
Pour tout entier $n\geq 1$ on a la suite exacte de Kummer
$$ 1 \to \mu_{n} \to \G_{m} \stackrel{x\mapsto x^n}{\longrightarrow}  \G_{m} \to 1.$$
Le th\'eor\`eme de Tsen et les propri\'et\'es connues du groupe de Brauer donnent
$H^2({\overline C},\G_{m})=0$. De la suite de Kummer et de la structure du sch\'ema de Picard
 on d\'eduit
$$ \Z/n = \Pic {\overline C}/n \oi H^2(\overline{C},\mu_{n}).$$
Ceci donne (i). 

\smallskip

Comme on a ${\rm cd}(k) \leq 1$ et $H^{i}({\overline C},\mu_{n}^{\otimes i})=0$
pour $i \geq 3$, la suite spectrale
$$E_{2}^{pq}=H^p(k,H^q({\overline C}, \mu_{n}^{\otimes i})) \Longrightarrow H^n(C,\mu_{n}^{\otimes i})$$
donne
$$H^3(C,\mu_{n})=H^1(k, H^2(\overline{C},\mu_{n}^{\otimes i}))=H^1(k,\mu_{n}^{\otimes i-1})=\mu_{n}^{\otimes i-2},$$
ce qui donne (ii).

\smallskip

On a $H^{q}(\overline{C},\G_{m})=0$ pour $q \geq 2$. 
La suite spectrale
$$E_{2}^{pq}=H^p(k,H^q({\overline C}, \G_{m}))  \Longrightarrow H^n(C,\G_{m})   $$
et la  nullit\'e de $H^2(k,\Pic^0 {\overline C})$
(qui vient de ce que $\Pic^0 {\overline C}$ est divisible)
donne 
$$ H^3(C,\G_{m})= H^2(k,\Pic \overline{C}) = H^2(k,\Z)= H^1(k,\Q/\Z)= \Q/\Z(-1).$$
  La suite spectrale analogue
pour $U$ donne la finitude de $H^r(U,\mu_{n}^{\otimes i})$ via 
\cite{milneEC}, Th. VI.5.5 qui vaut sur un corps alg\'ebriquement clos.
Cette m\^eme suite spectrale donne
$H^3(U,\G_{m})=H^2(k, \Pic {\overline U}) $. Le groupe $\Pic {\overline C}$
est extension de $\Z$ par un groupe divisible. Pour $U$ ouvert strict de $C$, le groupe $\Pic{\overline U}$
est donc extension d'un groupe fini par un groupe divisible. Ceci implique $H^2(k, \Pic{\overline U})=0 $,
et donc  $H^3(U,\G_{m})=0$. De m\^eme $H^3(U,\mu_n^{\otimes i})=0$ car 
${\rm cd}(k) \leq 1$ et $H^q(\ov U,\mu_n^{\otimes i})=0$ pour $q >1$ 
vu que $\ov U$ est une courbe affine sur un corps alg\'ebriquement clos.

 Les corps $\kappa(v)$ sont
de dimension cohomologique 1, les $H^{q}(\kappa(v),\G_{m})$ sont donc nuls pour $q \geq 2$.
La proposition~3.1 (2) 
de \cite{HaSz} donne l'isomorphisme 
$H^3_{c}(U,\G_{m}) \oi H^3(C,\G_{m})$
et $H^3_{c}(U,\mu_{n}^{\otimes i})=H^3(C,\mu_{n}^{\otimes i})$ pour tout $i \in \Z$. Enfin on d\'eduit aussi de la suite spectrale et de la nullit\'e 
de $\Br {\overline C}$ (Tsen) un isomorphisme
$$H^1(k, \Pic {\overline C}) = \Br C.$$
De la suite exacte
$$ 0 \to J({\overline k}) \to \Pic {\overline C} \to \Z \to 0$$
et de ${\rm cd}(k) \leq 1$
on d\'eduit la suite exacte
$$ 0 \to \Z/I(C) \to H^1(k,J) \to H^1(k, \Pic {\overline C}) \to 0$$
car $H^1(k,\Z)=0$.
De ${\rm cd}(k) \leq 1$ et de la divisibilit\'e de $J$ on d\'eduit que
$H^1(k,J) $ est divisible, ce qui implique la divisiblit\'e de $\Br C$.
Ceci \'etablit (iii). 

\smallskip

L'\'enonc\'e (iv) est standard. Le calcul de $H^3(C,\G_{m})$ et 
la nullit\'e de $H^3(K,\G_{m})$, ainsi que
les isomorphismes $H^2(K_{v},\G_{m}) \oi H^1(\kappa_{v},\Q/\Z)$
et \cite[III, Prop. (2.1)]{GrBr} donnent l'\'enonc\'e (v). 
On peut aussi l'obtenir \`a moindres frais en prenant la cohomologie galoisienne de la suite exacte
$$ 1 \to {\overline k}(C)^{\times}/{\overline k}^{\times} \to {\rm Div} \ {\overline C} \to \Pic{\overline C}\to 0.$$
\end{proof}

\begin{rema}
Certains des r\'esultats de cette sous-section sont discut\'es dans \cite[Chap. I, Appendix A, p. 131--138]{milneADT}.
\end{rema}

\subsection{Les groupes $\cyr{X}^1(K,\mu_{n}^{\otimes i})$}

 \begin{prop}\label{Sha1Kmun}
 
 Soient  $F$ un corps de caract\'eristique nulle, $k=F((t))$,  et $C$ une $k$-courbe 
 projective, lisse, g\'eom\'etriquement int\`egre. Soit $K$ le corps 
 des fonctions de $C$. Soit
  ${\mathcal C}/F[[t]]$
 un $F[[t]]$-mod\`ele
  r\'egulier, int\`egre, projectif de $C/k$.   Soit ${\mathcal C}_{0}/F$ la fibre  sp\'eciale et
 $C_{0} =( {\mathcal C}_{0})_{\rm red}$ la fibre sp\'eciale r\'eduite de  ${\mathcal C}/F[[t]]$.

 Consid\'erons les groupes suivants.

(i) Le groupe $\cyr{X}^1(K,\mu_{n}) = \Ker [H^1(K,\mu_{n}) \to  \prod_{v \in \Omega} H^1(K_{v},\mu_{n})].$

(ii) Le noyau de la fl\`eche d'\'evaluation
$ H^1(C,\mu_{n}) \to \prod_{v \in \Omega} H^1(\kappa_{v},\mu_{n}),$
o\`u $\kappa_{v}$ d\'esigne le corps r\'esiduel en $v$.

(iii) Le groupe $H^1({\mathcal C},\mu_{n})$.

(iv) Le groupe $H^1({\mathcal C}_{0},\mu_{n})$.

(v) Le groupe $H^1(C_{0},\mu_{n})$.

Ils s'identifient tous \`a des sous-groupes de $H^1(C,\mu_{n}) \subset H^1(K,\mu_{n})$.

Les groupes (i) et (ii) co\"{\i}ncident.
Les groupes (iii), (iv), (v) co\"{\i}ncident. Le groupe (ii) est un sous-groupe du groupe (iii).

Si $F$ est alg\'ebriquement clos, tous les groupes co\"{\i}ncident, et ce sont des groupes finis.

  \end{prop}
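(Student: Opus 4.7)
The plan is to establish the proposition in four steps, corresponding to the four assertions: coincidence of (iii),(iv),(v); embedding of all five groups into $H^1(C,\mu_n)$; the equality (i)=(ii); the inclusion (ii)$\subset$(iii); and finally the algebraically closed case.

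First I would prove that the groups (iii), (iv), (v) coincide and embed in $H^1(C,\mu_n)$. Since $\mathcal{C}$ is proper over the henselian local base $\Spec F[[t]]$, the proper base change theorem yields $H^1(\mathcal{C},\mu_n) \cong H^1(\mathcal{C}_0,\mu_n)$, giving (iii)=(iv). Topological invariance of the \'etale site (in characteristic zero $\mu_n$ is \'etale) gives $H^1(\mathcal{C}_0,\mu_n)\cong H^1(C_0,\mu_n)$, hence (iv)=(v). The embedding (iii)$\hookrightarrow H^1(C,\mu_n)$ comes from restriction along the open immersion $j:C\hookrightarrow\mathcal{C}$: since $\mathcal{C}$ is normal one has $j_*\mu_n=\mu_n$, and the five-term Leray sequence starts
\[
0\to H^1(\mathcal{C},\mu_n)\to H^1(C,\mu_n)\to H^0(\mathcal{C},R^1 j_*\mu_n)\to H^2(\mathcal{C},\mu_n),
\]
providing the injection. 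The further inclusion $H^1(C,\mu_n)\hookrightarrow H^1(K,\mu_n)$ is a standard consequence of purity on the smooth curve $C$.

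Second, I would establish (i)=(ii). For each $v\in\Omega$, the residue sequence for the complete discretely valued field $K_v$ with residue field $\kappa_v$ reads
\[
0\to H^1(\kappa_v,\mu_n)\to H^1(K_v,\mu_n)\to H^0(\kappa_v,\Z/n)\to 0.
\]
Hence $\alpha\in H^1(K,\mu_n)$ maps to zero in $H^1(K_v,\mu_n)$ if and only if its residue $\partial_v\alpha$ and its image in $H^1(\kappa_v,\mu_n)$ both vanish. Imposing this at every $v$: the vanishing of all residues is, by purity on the smooth $C$, equivalent to $\alpha\in H^1(C,\mu_n)$, and the remaining conditions then state precisely that $\alpha$ lies in (ii).

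Third, for the inclusion (ii)$\subset$(iii) I would return to the Leray sequence above and check that any $\alpha\in$ (ii) dies in $H^0(\mathcal{C},R^1 j_*\mu_n)$. At a generic point $\eta$ of an irreducible component $Y$ of $C_0$, the stalk $(R^1 j_*\mu_n)_\eta$ equals $\Z/n$ (since the strictly henselian residue field is separably closed and the Kummer sequence collapses the unit contribution), and the image of $\alpha$ there is precisely the residue $\partial_\eta\alpha$ along the vertical divisor $Y$. To force $\partial_\eta\alpha=0$, I would pick a closed point $P$ of $\mathcal{C}$ lying on the intersection of $Y$ with the closure $\overline v$ of some horizontal $v\in\Omega$, and invoke a reciprocity/compatibility relation in the two-dimensional regular local ring $\mathcal{O}_{\mathcal{C},P}$ linking the residues along $Y$ and along $\overline v$; the hypothesis $\alpha|_{K_v}=0$ feeds into this relation and pins down $\partial_\eta\alpha=0$. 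This local two-dimensional residue reciprocity is the main technical obstacle of the proposition.

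Finally, for the algebraically closed case, I would note first that $H^1(C_0,\mu_n)$ is finite by the standard finiteness theorem for \'etale cohomology of a proper variety over an algebraically closed field, so all five groups are finite once coincidence is proved. For the reverse inclusion (iii)$\subset$(ii) in this setting, I would use that for each $v\in\Omega$ the closure $\overline v$ in $\mathcal{C}$ is $\Spec F[[t_v]]$ with $F$-rational closed point, so henselian invariance gives $H^1(\overline v,\mu_n)=H^1(F,\mu_n)=F^\times/n=0$ because $F=\overline F$. The restriction $H^1(\mathcal{C},\mu_n)\to H^1(\overline v,\mu_n)$ is therefore zero, and so is its further restriction to the generic point, which is the evaluation map $H^1(\mathcal{C},\mu_n)\to H^1(\kappa_v,\mu_n)$ appearing in (ii). All five groups then coincide and are finite.
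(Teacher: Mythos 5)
Your treatment of (iii)=(iv)=(v) (proper base change plus topological invariance), of the embeddings into $H^1(C,\mu_n)$, of (i)=(ii) via the residue sequences over $K_v$ and over $C$, and of the algebraically closed case is essentially identical to the paper's proof. (Two harmless variants: the paper gets $H^1({\mathcal C},\mu_n)\hookrightarrow H^1(C,\mu_n)$ from $\mu_{n,{\mathcal C}}\simeq i_*\mu_{n,K}$ on the regular scheme ${\mathcal C}$ rather than from your Leray sequence; and your claim that $\overline{v}=\Spec F[[t_v]]$ is stronger than needed and not automatic --- the paper only uses that $Z_v=\overline{v}$ is integral, finite over $F[[t]]$, hence henselian local with residue field $F$ when $F=\overline F$, so $H^1(Z_v,\mu_n)=0$.)

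The genuine gap is the step (ii)$\subset$(iii), which you yourself flag as ``the main technical obstacle'' and do not carry out --- and, as you set it up, it would not go through. If you take an arbitrary $v\in\Omega$ whose closure $\overline v$ meets the component $Y$ and an arbitrary point $P$ of $Y\cap\overline v$, the compatibility of residues in the two-dimensional regular local ring ${\mathcal O}_{{\mathcal C},P}$ takes the form $\partial_P(\alpha|_{\kappa_v})=(Y\cdot\overline v)_P\,\partial_Y(\alpha)$ (plus contributions from any other components of ${\mathcal C}_0$ through $P$), so the hypothesis $\alpha|_{\kappa_v}=0$ only yields $(Y\cdot\overline v)_P\,\partial_Y(\alpha)=0$ in $\Z/n$, which is vacuous whenever the intersection multiplicity shares a factor with $n$. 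The paper makes the choices in the opposite order, and that is the whole point: one first picks $P$ a \emph{smooth} closed point of the component $E$ lying on no other component of ${\mathcal C}_0$, then constructs a regular integral horizontal subscheme $Z\subset{\mathcal C}$, finite over $F[[t]]$, meeting $E$ \emph{transversally} at $P$; the place $v$ is then \emph{defined} as the generic point of $Z$. With these choices the compatibility square identifies $\partial_E(\alpha)\in\Z/n$ with $\partial_P(\alpha|_{\kappa_v})\in\Z/n$ with no multiplicity, so $\alpha|_{\kappa_v}=0$ forces $\partial_E(\alpha)=0$; running over all components $E$ kills all residues of $\alpha$ on ${\mathcal C}$ and purity gives $\alpha\in H^1({\mathcal C},\mu_n)$. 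To complete your argument you must supply both the existence of such a transversal horizontal curve through a well-chosen $P$ and the precise (multiplicity-one) form of the residue compatibility.
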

  
  \begin{proof} 
On a la suite exacte
  $$ 0 \to H^1(C,\mu_{n}) \to H^1(K,\mu_{n}) \to \bigoplus_{v \in C^{(1)}} \Z/n,$$
  la suite exacte compatible
  $$ 0 \to H^1(O_{v},\mu_{n}) \to H^1(K_{v},\mu_{n}) \to \Z/n,$$
  et des isomorphismes
  $$H^1(O_{v},\mu_{n}) \oi  H^1(\kappa_{v},\mu_{n}).$$
 Ceci \'etablit l'isomorphisme entre (i) et (ii). 
 
Comme ${\mathcal C}$ est r\'egulier, le morphisme    $ \Spec K \hookrightarrow {\mathcal C}$
induit  un isomorphisme $\mu_{n,{\mathcal C}} \oi i_{*}\mu_{n,K}$. On a donc des plongements 
 $$ H^1({\mathcal C},\mu_{n}) \hookrightarrow H^1(C,\mu_{n})  \hookrightarrow H^1(K,\mu_{n}).$$

Soit $\xi \in H^1(C,\mu_{n})$ dans le groupe (ii). 
Soit $E \subset {\mathcal C}_{0}$   une composante
int\`egre  de la fibre sp\'eciale  ${\mathcal C}_{0}$.
 Soit $P $ un point ferm\'e  lisse de $E$ qui n'appartient \`a aucune autre composante de ${\mathcal C}_{0}$. Il existe un sous-sch\'ema r\'egulier int\`egre $Z \subset {\mathcal C}$,
fini sur $\C[[t]]$, intersectant $E$ transversalement au point $P$. Soit $v \in C^{(1)}$ le point g\'en\'erique de $Z$. Les restrictions de $C$ \`a $v$ et de $E$ \`a $P$ induisent un diagramme commutatif pour les fl\`eches 
r\'esidus~:

$$
\begin{CD}
 H^1(C,\mu_{n}) @>{\partial_{\C(E)}}>>  \Z/n \cr
 @VVV @VV{=}V \cr
 H^1(\kappa_v,\mu_{n})  @>{\partial_P}>>  \Z/n. 
 \end{CD}
 $$
 
Comme $\xi$ a une image nulle dans $H^1(\kappa_{v},\mu_{n})$, 
le  r\'esidu de $\xi$ en $E$ est nul.
Ainsi tous les r\'esidus de $\xi$ aux points de codimension 1 du sch\'ema r\'egulier ${\mathcal C}$
sont nuls, ceci implique $\xi \in H^1({\mathcal C}, \mu_{n})$. Le groupe (ii) est donc dans le groupe (iii).

 L'isomorphisme entre les groupes  (iii) et  (iv) est un cas sp\'ecial du th\'eor\`eme de changement de base propre  (\cite[ VI, Cor. 2.7]{milneEC}).
  
L'isomorphisme entre les groupes  (iv) et (v)  : la cohomologie \'etale \`a coef\-ficients $\Z/n$
 est invariante
par passage au sous-sch\'ema r\'eduit (cela vient de ce que si $X$ est un 
sch\'ema, les petits 
sites \'etales de $X$ et $X_{\rm red}$ sont isomorphes, ce qui r\'esulte
formellement de \cite{milneEC}, Th. I.3.23).

L'adh\'erence d'un point $v$ de $C$
dans ${\mathcal C}$ est un sch\'ema $Z_{v}$ int\`egre   fini sur $F[[t]]$, hens\'elien, de corps des fractions $\kappa_{v}$. Supposons $F$  alg\'ebriquement clos. On a alors  $ H^1(Z,\mu_{n})=0$.
La  fl\`eche  de restriction compos\'ee 
$H^1({\mathcal C},\mu_{n}) \to H^1(Z,\mu_{n}) \to H^1(\kappa_{v},\mu_{n})$  est donc  nulle.
Ainsi le groupe (iii) est un sous-groupe du groupe (ii). Dans ce cas
on a d\'ej\`a que $H^1(C,\mu_n)$ est fini via la suite spectrale 
de Hochschild-Serre car le corps $k$ est \`a cohomologie galoisienne
finie et $H^1(\ov C,\mu_n)$ est fini par \cite{milneEC}, Th. VI.5.5.

  \end{proof}
   
     \begin{cor}\label{corshaZ/n}
   
   Soient $k=\C((t))$, $C$ une $k$-courbe projective, lisse, g\'eom\'etriquement int\`egre
  et  ${\mathcal C}/\C[[t]]$
 un mod\`ele int\`egre r\'egulier et propre de $C/k$.
 Soit $K$ le corps des fonctions de $C$. 
 Soit ${\mathcal C}_{0}$ la fibre  sp\'eciale et
 $C_{0} $ la fibre sp\'eciale r\'eduite.
 Les groupes suivants s'identifient naturellement \`a un m\^eme sous-groupe fini  de $H^1(K,\Z/n)$.
 
(i) Le groupe $\cyr{X}^1(K,\Z/n) = \Ker [H^1(K,\Z/n) \to  \prod_{v \in \Omega} H^1(K_{v},\Z/n)].$

(ii) Le noyau de la fl\`eche d'\'evaluation
$ H^1(C,\Z/n) \to \prod_{v \in \Omega} H^1(\kappa_{v},\Z/n),$
o\`u $\kappa_{v}$ d\'esigne le corps r\'esiduel en $v$.

(iii) Le groupe $H^1({\mathcal C},\Z/n)$.

(iv) Le groupe $H^1({\mathcal C}_{0},\Z/n)$.

(v) Le groupe $H^1(C_{0},\Z/n)$.

    \end{cor}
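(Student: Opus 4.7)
The plan is to derive Corollary~\ref{corshaZ/n} as an almost immediate consequence of Proposition~\ref{Sha1Kmun}, specialized to $F=\C$. The only ingredient needed beyond the proposition is the passage between $\mu_n$-coefficients and $\Z/n$-coefficients, which is provided by the fact that $\C$ contains a primitive $n$-th root of unity.

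More precisely, every scheme appearing in the statement --- namely $\mathcal{C}$, $C$, $\mathcal{C}_{0}$, $C_{0}=(\mathcal{C}_{0})_{\rm red}$, $\Spec K$, and the local objects $\Spec K_{v}$ and $\Spec \kappa_{v}$ for $v\in\Omega$ --- is equipped with a canonical structure morphism to $\Spec \C$. Choosing a primitive $n$-th root of unity $\zeta\in\C$ therefore produces an isomorphism of \'etale sheaves $\mu_{n}\iso\Z/n$ on each of these schemes, and these isomorphisms are compatible with every restriction, pullback, and residue map that intervenes in the formulation of the two statements. Consequently each of the five groups (i)--(v) in Corollary~\ref{corshaZ/n} is identified, via $\zeta$, with the corresponding group (i)--(v) in Proposition~\ref{Sha1Kmun}, and the inclusions and comparison maps between them match.

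It then suffices to invoke the last assertion of Proposition~\ref{Sha1Kmun}: when $F$ is algebraically closed, all five groups with $\mu_{n}$-coefficients coincide as a single finite subgroup of $H^1(K,\mu_{n})$. Transporting this conclusion along $\mu_{n}\simeq\Z/n$ yields the stated identification of the five groups in the corollary as one and the same finite subgroup of $H^1(K,\Z/n)$.

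There is essentially no obstacle, the content being purely the observation that $\C$ algebraically closed of characteristic zero makes the Tate twist trivial. The one small point worth flagging is that the identification (iv)$\,=\,$(v), namely $H^1(\mathcal{C}_{0},\Z/n)=H^1(C_{0},\Z/n)$, relies on invariance of \'etale cohomology with finite constant (equivalently $\mu_{n}$) coefficients under nilpotent thickenings, which is already used --- and valid for either choice of coefficients --- in the proof of Proposition~\ref{Sha1Kmun}.
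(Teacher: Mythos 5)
Your proposal is correct and is exactly the intended derivation: the paper states the corollary without proof as an immediate consequence of Proposition~\ref{Sha1Kmun}, the point being precisely that all schemes involved live over $\C$, so a choice of primitive $n$-th root of unity identifies $\mu_n$ with $\Z/n$ compatibly with every map in sight. Nothing to add.
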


  \begin{cor}\label{shadivisible}
   Soient $k=\C((t))$, $C$ une $k$-courbe projective, lisse, g\'eom\'etriquement int\`egre.
  Le groupe  $\cyr{X}^1(K,\Q/\Z) $
  est un groupe divisible de  type cofini.
   \end{cor}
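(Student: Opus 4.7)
Le plan est de d\'eduire ce corollaire du Corollaire~\ref{corshaZ/n} par passage \`a la limite inductive. Puisque $\Q/\Z = \varinjlim_n \Z/n$ et que la cohomologie \'etale commute aux colimites filtrantes, on a $H^1(K', \Q/\Z) = \varinjlim_n H^1(K', \Z/n)$ pour $K' \in \{K\} \cup \{K_v\}_{v \in \Omega}$. La suite exacte longue associ\'ee \`a $0 \to \Z/n \to \Q/\Z \to \Q/\Z \to 0$ et la divisibilit\'e de $H^0(K', \Q/\Z) = \Q/\Z$ donnent un isomorphisme $H^1(K', \Z/n) \cong {}_n H^1(K', \Q/\Z)$ ; en particulier la fl\`eche $H^1(K', \Z/n) \to H^1(K', \Q/\Z)$ est injective. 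En prenant les noyaux des fl\`eches d'\'evaluation, on obtient
$$\cyr{X}^1(K, \Q/\Z) = \varinjlim_n \cyr{X}^1(K, \Z/n) \quad \text{et} \quad {}_n \cyr{X}^1(K, \Q/\Z) = \cyr{X}^1(K, \Z/n).$$
La finitude de chaque $\cyr{X}^1(K, \Z/n)$ fournie par le Corollaire~\ref{corshaZ/n} entra\^{\i}ne alors le type cofini.

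Pour la divisibilit\'e, on utilisera l'isomorphisme $\cyr{X}^1(K, \Q/\Z) \cong H^1(C_0, \Q/\Z)$ h\'erit\'e du Corollaire~\ref{corshaZ/n}. Comme $\C$ contient toutes les racines de l'unit\'e, il suffit de montrer la divisibilit\'e de $\varinjlim_n H^1(C_0, \mu_n)$. La suite de Kummer sur la courbe projective r\'eduite $C_0/\C$, combin\'ee \`a la divisibilit\'e de $H^0(C_0, \G_m) = \C^*$, donne $H^1(C_0, \mu_n) \cong {}_n \Pic(C_0)$. Or $\Pic^0(C_0)$ est un groupe alg\'ebrique commutatif connexe sur $\C$, donc divisible, tandis que $\Pic(C_0)/\Pic^0(C_0)$ est sans torsion puisqu'il se plonge dans le r\'eseau $\Z^r$ des multi-degr\'es sur les composantes irr\'eductibles de $C_0$. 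Le lemme du serpent appliqu\'e \`a la multiplication par $n$ fournit ${}_n \Pic(C_0) = {}_n \Pic^0(C_0)$, et le passage \`a la limite identifie $\varinjlim_n H^1(C_0, \mu_n)$ au sous-groupe de torsion de $\Pic^0(C_0)$, lequel est divisible comme torsion d'un groupe divisible.

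Le point le plus d\'elicat me para\^{\i}t \^etre la v\'erification que $\Pic(C_0)/\Pic^0(C_0)$ est sans torsion pour une courbe projective r\'eduite pouvant \^etre singuli\`ere et r\'eductible. Une variante plus topologique consisterait \`a invoquer directement le fait classique que $H_1(C_0^{\mathrm{an}}, \Z)$ est sans torsion pour une courbe projective r\'eduite sur $\C$, ce qui forcerait la divisibilit\'e de $H^1(C_0, \Q/\Z)$ par comparaison \'etale-singuli\`ere \`a coefficients finis.
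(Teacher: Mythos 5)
Votre d\'emonstration est correcte, mais elle suit une route diff\'erente de celle du texte pour le point essentiel, \`a savoir la divisibilit\'e de $H^1(C_{0},\Q/\Z)$ pour une courbe propre r\'eduite sur $\C$. Le texte utilise la normalisation $\pi : \widetilde{C_{0}} \to C_{0}$ et la suite exacte de faisceaux $0 \to \Q/\Z \to \pi_{*}(\Q/\Z) \to \bigoplus_{m}(i_{m})_{*}F_{m} \to 0$, qui pr\'esente $H^1(C_{0},\Q/\Z)$ comme extension du groupe divisible $H^1(\widetilde{C_{0}},\Q/\Z)$ (torsion de la jacobienne des composantes normalis\'ees) par un quotient du groupe divisible $\bigoplus_{m}F_{m}$ ; c'est l'argument le plus \'economique. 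Vous passez au contraire par la suite de Kummer et la structure du sch\'ema de Picard : l'\'egalit\'e ${}_{n}\Pic(C_{0})={}_{n}\Pic^0(C_{0})$ repose sur le fait que $\Pic^0(C_{0})$ est exactement le noyau de l'application multidegr\'e (ce qui est dans \cite{BRL}, chap.~9, pour une courbe propre r\'eduite), point que vous signalez \`a juste titre comme le plus d\'elicat ; votre variante topologique (absence de torsion dans $H_{1}(C_{0}^{\rm an},\Z)$, jointe \`a l'identification de $H^1(C_{0},\Q/\Z)$ avec $\Hom(H_{1}(C_{0}^{\rm an},\Z),\Q/\Z)$) est \'egalement valable. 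Votre approche a l'avantage d'identifier pr\'ecis\'ement $\cyr{X}^1(K,\Q/\Z)$ \`a la torsion de $\Pic^0(C_{0})$, et votre premier paragraphe (passage \`a la limite inductive sur $n$ et \'egalit\'e ${}_{n}\cyr{X}^1(K,\Q/\Z)=\cyr{X}^1(K,\Z/n)$) explicite utilement \`a la fois l'isomorphisme $\cyr{X}^1(K,\Q/\Z)\simeq H^1(C_{0},\Q/\Z)$, que le texte d\'eduit directement du corollaire~\ref{corshaZ/n} sans d\'etailler ce passage \`a la limite, et l'assertion de type cofini, laiss\'ee implicite dans le texte.
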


\begin{proof}

D'apr\`es le corollaire~\ref{corshaZ/n}, le groupe $\Sha^1(K,\Q/\Z)$
est isomorphe \`a $H^1(C_{0},\Q/\Z)$, o\`u $C_{0}$ est une courbe
propre sur $\C$. Il suffit alors d'utiliser la proposition bien 
connue suivante~: 

{\it Soit $C$ une courbe propre sur un corps alg\'ebriquement 
clos de caract\'eristique z\'ero. Alors
le groupe $H^1(C,\Q/\Z)$ est un groupe divisible.}

Faute de r\'ef\'erence, nous indiquons une preuve de cette proposition. 
On peut supposer $C$ r\'eduite.
Soit $\pi : \widetilde{C} \to C$ la normalisation. On dispose 
d'une suite exacte de 
faisceaux
$$ 0 \to \Q/\Z \to \pi_{*}(\Q/\Z) \to \bigoplus_{m \in S} (i_m)_* F_{m } 
\to 0$$
o\`u $S$ est un ensemble fini de points de $C$ et o\`u chaque  $F_{m}$
est une somme directe d'exemplaires de $\Q/\Z$.
On en d\'eduit une suite exacte
$$ \bigoplus F_{m} \to H^1(C,\Q/\Z) \to H^1(\tilde{C}, \Q/\Z) \to 0.$$
La courbe $\tilde{C}$ est une union disjointe de courbes propres  et lisses,
pour une telle courbe $D$, le groupe $H^1(D,\Q/\Z)$  s'identifie au groupe
des points de torsion de la jacobienne, qui est un groupe divisible.

\end{proof}

Pour tout module galoisien $M$ et tout $i >0$, on notera 
$\Sha^i_{\omega}(K,M)$ (ou simplement $\Sha^i_{\omega}(M)$ si $K$ est sous-entendu) 
le sous-groupe de $H^i(K,M)$ des \'el\'ements
d'image nulle dans tous les 
$H^i(K_{v},M)$ sauf peut-\^etre un nombre fini.

   \begin{prop}\label{sha1sha1omega}
 Soit $k=\C((t))$. 
 
 (i) On  a
  $\cyr{X}^1(K,\mu_{n}) =\cyr{X}_{\omega}^1(K,\mu_{n}).$
  
(ii) On  a $\cyr{X}^1(K,\Z/n) =\cyr{X}_{\omega}^1(K,\Z/n).$

(iii) On  a $\cyr{X}^1(K,\Q/\Z) =\cyr{X}_{\omega}^1(K,\Q/\Z),$ et ce groupe est divisible.

(iv) Pour tout $\Gal({\overline K}/K)$-module de permutation $P$,
on a $\cyr{X}^2(K,P)= \cyr{X}^2_{\omega}(K,P)$, et ce groupe est divisible.

  \end{prop}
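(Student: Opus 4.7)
The plan is to prove (i) directly; parts (ii)--(iv) then follow by formal arguments. For (i), the inclusion $\Sha^1(K,\mu_n) \subset \Sha^1_\omega(K,\mu_n)$ is immediate. For the converse, given $\alpha \in \Sha^1_\omega(K,\mu_n)$, fix a finite set $S \subset \Omega$ such that $\alpha|_{K_v}=0$ for every $v \in U := \Omega \setminus S$. By Corollaire~\ref{corshaZ/n} it suffices to show $\alpha \in H^1(\mathcal{C},\mu_n)$, i.e.\ that the residues of $\alpha$ at every codimension-$1$ point of the regular arithmetic surface $\mathcal{C}$ vanish. Residues at horizontal points $v \in U$ vanish because $\alpha$ is unramified there, giving $\alpha \in H^1(C\setminus S,\mu_n)$; residues at vertical points (generic points of components of $\mathcal{C}_0$) vanish by repeating the transversal-cut argument in the proof of Proposition~\ref{Sha1Kmun}, restricted to $v \in U$: for each component $E$ of $\mathcal{C}_0$, infinitely many smooth $P \in E$ are cut transversally by a horizontal curve $Z \subset \mathcal{C}$ whose generic point $\eta(Z)$ lies in $U$ (since $U$ is cofinite in $\Omega$), and the residue of $\alpha$ at $E$ coincides with the residue at $P$ of $\alpha|_{\kappa_{\eta(Z)}}=0$.

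\medskip

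The main obstacle is to show that the residue $m_s \in \Z/n$ of $\alpha$ at each horizontal component $Z_s = \ov{\{s\}}$ (for $s \in S$) vanishes. Fix $s_0 \in S$, set $P_0 = Z_{s_0} \cap \mathcal{C}_0$ and $S_{P_0}=\{s \in S : Z_s \ni P_0\}$, and write $\alpha = [a]$ with $a \in K^\times$. For any horizontal curve $Z_v$ through $P_0$ with $v \in U$, the local intersection $(\mathrm{div}(a) \cdot Z_v)_{P_0}$ computes the image of $\alpha|_{\kappa_v}$ in $\kappa_v^\times/\kappa_v^{\times n} \cong \Z/n$, so vanishes modulo~$n$. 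Subtracting the already-vanishing vertical contribution and the horizontal contributions coming from $s \notin S$, one obtains
\[
\sum_{s \in S_{P_0}} m_s \, (Z_s \cdot Z_v)_{P_0} \equiv 0 \pmod{n}.
\]
Varying $v \in U$ so that $Z_v$ passes through $P_0$ with prescribed orders of contact with the various $Z_s$, $s \in S_{P_0}$---which is possible since $\mathcal{C}$ is regular at $P_0$ and $\C((t))$ is infinite---yields enough linear congruences to force each $m_s \equiv 0 \pmod{n}$. This local intersection-theoretic construction is the main technical step.

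\medskip

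Parts (ii)--(iv) then follow formally. Part (ii) coincides with (i) since $K \supset \C$ makes $\mu_n \cong \Z/n$ as Galois modules. For (iii), any $\alpha \in \Sha^1_\omega(K,\Q/\Z)$ has finite order $n$, so lies in $H^1(K,\Z/n)$; the injection $H^1(K_v,\Z/n) \hookrightarrow H^1(K_v,\Q/\Z)$ then shows $\alpha \in \Sha^1_\omega(K,\Z/n) = \Sha^1(K,\Z/n)$ by (ii), and divisibility is Corollaire~\ref{shadivisible}. For (iv), a permutation module $P = \mathrm{Ind}_H^{\Gal(\ovK/K)}\,\Z$ with $L = \ovK^H$ satisfies $H^i(K,P) = H^i(L,\Z)$ by Shapiro's lemma (compatibly at each place), and the exact sequence $0 \to \Z \to \Q \to \Q/\Z \to 0$ together with $H^i(L,\Q)=0$ for $i \geq 1$ gives $H^2(L,\Z) \cong H^1(L,\Q/\Z)$, identifying $\Sha^2_{(\omega)}(K,P)$ with $\Sha^1_{(\omega)}(L,\Q/\Z)$. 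Since $L$ is again of the form $\C((t'))(C')$ (\S\ref{lescorps}), applying (iii) to $L$ concludes, divisibility included.
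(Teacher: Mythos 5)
Your parts (ii)--(iv) are correct and coincide with the paper's treatment (the paper notes that (i) formally implies (ii) and (iii), and deduces (iv) exactly as you do, via Shapiro and (iii) applied to $L=\C((t'))(C')$). For (i), your reduction to showing $\alpha\in H^1(\mathcal C,\mu_n)$ and then invoking Corollaire~\ref{corshaZ/n} is legitimate, and the vanishing of the vertical residues via transversal cuts with generic point in $U$ is fine. The genuine gap is exactly at the step you yourself call the main technical one. The congruences
$$\sum_{s\in S_{P_0}} m_s\,(Z_s\cdot Z_v)_{P_0}\equiv 0 \pmod n$$
are correctly derived (granting that you choose $Z_v$ regular and not a component of $\mathrm{div}(a)$), but the claim that varying $Z_v$ ``yields enough linear congruences to force each $m_s\equiv 0$'' is precisely what has to be proved, and it is not a routine consequence of the regularity of $\mathcal C$ at $P_0$ and the infinitude of the residue field. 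What you need is that the achievable intersection vectors $((Z_s\cdot Z_v)_{P_0})_{s\in S_{P_0}}$ generate all of $(\Z/n)^{S_{P_0}}$. The obstruction is that the curves $Z_s$ may be singular at $P_0$ (so every regular test curve meets $Z_s$ with multiplicity at least $\mathrm{mult}_{P_0}Z_s$, and the attainable multiplicities form a constrained semigroup), and several $Z_s$ may share tangent directions or infinitely near points, so that the contact orders cannot be prescribed independently: raising the contact of $Z_v$ with one branch changes its contact with the others. The generation statement is true, but proving it requires an actual analysis (embedded resolution of $\mathrm{div}(a)+\mathcal C_0$ at $P_0$, or the theory of infinitely near points); as written, the heart of the proposition is asserted rather than established.

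For comparison, the paper's proof of (i) avoids the regular model and intersection theory entirely. It first shows that a class of $f$ in $\Sha^1_\omega(K,\mu_n)$ is unramified at every closed point of the generic fibre $C$, by a propagation argument: if $v(f)\not\equiv 0\pmod n$ at some closed point $P$ with residue field $L$, the implicit function theorem over $L$ produces infinitely many closed points $Q$ near $P$ (genuinely closed, by the lemma on proper subextensions) at which $v_Q(f)\not\equiv 0 \pmod n$, contradicting local triviality at almost all places. It then passes from ``unramified everywhere and split almost everywhere'' to ``split everywhere'' by considering the \'etale covering $D=C(\sqrt[n]{f})\to C$ and showing $D(k')\to C(k')$ is surjective for every finite extension $k'/k$ (the image is closed by properness and has finite complement by hypothesis), which for an \'etale covering forces total splitting at every place. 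If you want to keep your intersection-theoretic route, you must supply a proof of the generation claim; otherwise the paper's two-step argument is the shorter path.
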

  
  \begin{proof}
  L'\'enonc\'e (i) implique clairement (ii) et 
  (iii), la divisibilit\'e venant du corollaire \ref{shadivisible}.
  Soit $L/K$ une extension finie de corps, 
   et $G_{L } \subset G_{K}$ les groupes de Galois absolus.
  Soit $P=\Z[G_{K}/G_{L}]$. On a $H^2(K,P)=H^2(L,\Z)$ et l'\'enonc\'e analogue
  sur tout compl\'et\'e $K_{v}$ de $K$. On a donc 
{\small $$ \cyr{X}^2_{\omega_{K}}(K,P)= \cyr{X}^2_{\omega_{L}}(L,\Z)=
   \cyr{X}^1_{\omega_{L}}(L,\Q/\Z) = \cyr{X}^1(L,\Q/\Z) = \cyr{X}^2 (K,P)$$}
par application du r\'esultat (iii) au corps $L$, qui est le corps des fonctions d'une courbe sur un corps
extension finie de $\C((t))$, donc  isomorphe \`a $\C((t))$. La divisibilit\'e du groupe consid\'er\'e suit aussi de (iii).

Il suffit donc d'\'etablir l'\'enonc\'e (i).
  
 On a $H^1(K,\mu_{n})=K^{\times}/K^{\times n}$ et 
 $H^1(K_{v},\mu_{n})=K_{v}^{\times}/K_{v}^{\times n}$. 
 Soit $f \in K^{\times}$ dont l'image dans $H^1(K,\mu_{n})$ est 
dans $\Sha^1_{\omega}(K,\mu_n)$.
Supposons d'abord $v(f)$ nul modulo $n$ pour toute place $v \in C^{(1)}$.  
Alors le rev\^etement $D:=C(^n \sqrt{f})$ de $C$ est \'etale. 
Pour toute extension finie $k'$ de $k$, l'image de l'application 
$D(k') \to C(k')$ est ferm\'ee par propret\'e de $D \to C$, et le 
compl\'ementaire de cette image dans $C(k')$ est fini car par hypoth\`ese, 
le rev\^etement $D \to C$ est totalement d\'ecompos\'e en presque 
toute place $v \in C^{(1)}$. On en d\'eduit  que $D(k') \to C(k')$ 
est surjectif, et le rev\^etement $D \to C$ \'etant \'etale, ceci 
implique qu'il est totalement d\'ecompos\'e en toute place $v \in C^{(1)}$,
i.e. la classe de $f$ est dans $\Sha^1(K,\mu_n)$.

\smallskip

Supposons d\'esormais qu'il existe $v \in C^{(1)}$ 
avec $v(f)$ non nul modulo $n$. 
   Soit $P$ le point ferm\'e de $C$ d\'efini par $v$ et soit $L=\kappa_{v}$ son corps r\'esiduel.
   
     Soit $\pi \in K^{\times}$ une uniformisante de $C_{L}$ en $P$.
   On a donc $f=u.\pi^r$ avec $r \neq 0 \ {\rm mod} \ n$ 
et $u\in K^{\times}$ unit\'e en $P$.
   
   Par le th\'eor\`eme des fonctions implicites sur le corps $L$, 
   il existe un voisinage ouvert
 analytique $U \subset C(L)$ de $P$ et un ouvert $V \subset L$ contenant $0$
 tel que $\pi$ induise un isomorphisme entre $U$ et $V$.
 Pour $U$ suffisamment petit, et $Q \in U$, $Q \neq P$, la classe de
 $u(Q)$ dans $L^{\times}/L^{\times n} \oi \Z/n$ est constante, et la classe de $\pi(Q)$
 prend toutes les valeurs dans  $L^{\times}/L^{\times n} \oi \Z/n$. Il existe donc
 une infinit\'e de points  $Q \in C(L)$, 
 de valuation associ\'ee $v=v_{Q}$,  tels que $v(f(Q)) \neq 0 \ {\rm mod} \ n$, et donc
 $f \notin K_{v}^{\times}/K_{v}^{\times n}$.
Pour conclure  que toute classe dans $\cyr{X}_{\omega}^1(K,\mu_{n})$
 appartient \`a $H^1(C,\mu_{n})$, il suffit de montrer qu'on peut 
trouver une infinit\'e de points $Q$ comme ci-dessus qui de plus 
d\'efinissent des points ferm\'es de $C$. 
Or, pour $U$ suffisamment petit, $U\setminus P$ est une union finie 
de $n$ ouverts non vides
$U_{i}, i=1,\dots,n$ tels que sur $U_{i}$ on ait $v_{L}(\pi(Q))=i \in \Z/n$.
On conclut avec le lemme suivant~:

\begin{lem}
Soient $K$ un corps complet pour une valuation discr\`ete 
et $L/K$ une extension finie s\'eparable.
Soit $X$ une $K$-vari\'et\'e lisse int\`egre.
Dans $X(L)$, la r\'eunion des $X(F) \subset X(L)$ pour $F/K$ 
sous-extension propre de $L/K$
est un ferm\'e dont le compl\'ementaire est dense dans $X(L)$.
\end{lem}

Le lemme se montre facilement en se ramenant (via le th\'eor\`eme 
des fonctions implicites) au cas o\`u $X$ est l'espace affine, en 
prenant un ouvert de Zariski non vide de $X$ \'equip\'e d'un 
morphisme \'etale vers ${\bf A}_K^n$.

\end{proof}

  \begin{remas}
 
 (1) Soit $k=\C((t))$ et $C$ une $k$-courbe projective, lisse, g\'eom\'etriquement int\`egre d'indice $I=I(C)>1$.
 Alors l'application 
 compos\'ee
 $$ H^1(k,\Z/I) \to H^1(C,\Z/I) \to \prod_{v \in \Omega} H^1(\kappa_{v},\Z/I)$$
 est nulle, et l'application  compos\'ee $$H^1(k,\Z/I) \to H^1(C,\Z/I)  \to H^1(K,\Z/I)$$
 est injective. On a donc dans ce cas $$\Z/I \simeq H^1(k,\Z/I) \subset \cyr{X}^1(K, \Z/I) \subset \cyr{X}^1(K, \Q/\Z).$$
 
 \medskip
 
 (2) Un exemple concret est fourni par la courbe  $C$ d'\'equation $$X^3+tY^3+t^2Z^3=0.$$
 On a $I(C)=3$. La classe de $t \in k^{\times}/k^{\times 3}$ s'annule 
dans tout $ \kappa_{v}^{\times}/\kappa_{v}^{\times 3}$, c'est un 
\'el\'ement non nul de $\Sha^1(\Z/3)$. Le 
mod\`ele r\'egulier minimal de $C$ 
a une fibre sp\'eciale de type $3I_{0}$. La courbe $C_{0}$    \'etant une courbe elliptique (lisse) sur $\C$, le corollaire \ref{corshaZ/n} donne donc  $\cyr{X}^1(K,\Z/3)=(\Z/3)^2$.

 \medskip
 
  (3)  Comme dans  \cite{CTPaSu}, on peut consid\'erer le sous-groupe 
   $$\cyr{X}^1_{\rm total}(K,\Z/n) = \Ker [H^1(K,\Z/n) \to \prod_{v \in 
{\mathcal C}^{(1)} } H^1(K_{v},\Z/n)]$$
   de $\cyr{X}^1(K,\Z/n)$.
    Il s'identifie au groupe des \'el\'ements
  de  $H^1(C_{0}, \Z/n)$ qui s'annulent 
  au point g\'en\'erique de chaque composante connexe de $C_{0}$.
  On peut aussi v\'erifier qu'il s'identifie au groupe des \'el\'ements de $H^1(K,\Z/n) $
  dont la restriction \`a tout compl\'et\'e de $K$ en une valuation discr\`ete de rang 1  quelconque de $K$ est nulle.

  Le groupe $\cyr{X}^1_{\rm total}(K,\Z/n)$ est  nul si la fibre
  sp\'eciale r\'eduite $C_{0}$ est lisse, car $H^1(C_{0},\Z/n) \to H^1(\C(C_{0}),\Z/n)$
  est alors  injectif. Ceci s'applique \`a l'exemple (2) ci-dessus.
 
  Si chaque composante de $C_{0}$ est une courbe lisse de genre z\'ero,
  les applications de restriction de $H^1(C_{0},\Z/n)$ 
  au point g\'en\'erique de chaque composante
  se factorisent  par $H^1(\P^1_{\C},\Z/n)=0$, on a  alors $\cyr{X}^1_{\rm total}(K,\Z/n) = H^1(C_{0},\Z/n)$.
  Un exemple avec $\cyr{X}^1_{\rm total}(K,\Z/n) $ non nul est donc fourni
  par toute courbe elliptique $C/k$ dont la fibre sp\'eciale est de type $I_{m} $ avec $m\geq 2$.
  Comparer avec les paragraphes 7  et 8  (et particuli\`erement le corollaire   8.1) de
   \cite{HHK2}. 
  
\medskip

(4) Si $K$ est un corps de nombres, le th\'eor\`eme de Tchebotarev donne 
$\Sha^1(K,\Z/n)=\Sha^1_{\omega}(K,\Z/n)=0$, et de m\^eme avec $\Q/\Z$ au lieu 
de $\Z/n$. On a donc aussi dans ce cas $\Sha^2_{\omega}(K,P)=\Sha^2(K,P)=0$
pour tout module de permutation $P$. Par contre $\Sha^1_{\omega}(K,\mu_n)$ 
peut contenir strictement $\Sha^1(K,\mu_n)$, c'est par exemple le 
cas pour $K=\Q$ et $n=8$ (Grunwald-Wang).

\end{remas}

\subsection{Le groupe $\cyr{X}^2(K,\mu_{n})$}

\begin{prop}\label{Sha2Kmun}
 Soient $k=\C((t))$, $C$ une $k$-courbe projective, lisse, g\'eom\'etriquement int\`egre
  et  ${\mathcal C}/\C[[t]]$
 un mod\`ele int\`egre r\'egulier et propre de $C/k$.
 Soit $K$ le corps des fonctions de $C$. 
 Soit ${\mathcal C}_{0}$ la fibre  sp\'eciale et
 $C_{0} $ la fibre sp\'eciale r\'eduite.
 Les groupes suivants s'identifient naturellement \`a un m\^eme sous-groupe fini  de $H^2(K,\mu_{n})$.

(i) Le groupe $\cyr{X}^2(K,\mu_{n}) = \Ker [H^2(K,\mu_{n}) \to  \prod_{v \in \Omega} H^2(K_{v},\mu_{n})].$

(ii) Le groupe $_n \Br(C)$.

(iii) Le groupe $H^3_{C_{0}}({\mathcal C}, \mu_{n})$.

(iv) Le   groupe $\Hom(H^1(C_{0}, \mu_{n}), \Z/n)$.

(v) Le groupe  $\Hom(\cyr{X}^1(K,\mu_{n}),\Z/n)$.
\end{prop}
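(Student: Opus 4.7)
The plan is to work around the chain $(i) \simeq (ii) \simeq (iii) \simeq (iv) \simeq (v)$, each identification being compatible with the common embedding into $H^2(K,\mu_n)$. The equivalence $(i) \Leftrightarrow (ii)$ is formal: Hilbert~90 gives $H^2(K,\mu_n) \simeq {}_n\Br K$ and similarly for each $K_v$, so restricting the exact sequence of Proposition~\ref{cohocourbes}(v) to $n$-torsion identifies $\Sha^2(K,\mu_n)$ with ${}_n\Br C$. The equivalence $(iv) \Leftrightarrow (v)$ is immediate from Proposition~\ref{Sha1Kmun}, whose last assertion (for $F=\C$) identifies $\Sha^1(K,\mu_n)$ with $H^1(C_0,\mu_n)$.

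For $(ii) \simeq (iii)$, I would combine the localization long exact sequence
\begin{equation*}
\cdots \to H^2(\mathcal{C},\mu_n) \to H^2(C,\mu_n) \to H^3_{C_0}(\mathcal{C},\mu_n) \to H^3(\mathcal{C},\mu_n) \to \cdots
\end{equation*}
with proper base change $H^i(\mathcal{C},\mu_n) \simeq H^i(C_0,\mu_n)$, applicable because $\Spec \C[[t]]$ is henselian with residue field $\C$. The $H^3$ term vanishes as $C_0$ is one-dimensional, so $H^3_{C_0}(\mathcal{C},\mu_n)$ is the cokernel of $H^2(\mathcal{C},\mu_n) \to H^2(C,\mu_n)$. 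A snake-lemma chase through the Kummer sequences on $\mathcal{C}$ and on $C$, using surjectivity of $\Pic \mathcal{C} \to \Pic C$ (closure of divisors on the regular scheme $\mathcal{C}$) and the vanishing ${}_n \Br \mathcal{C} = 0$ (which follows from $\Br C_0 = 0$ on the reduced curve, together with proper base change and surjectivity of $\Pic \mathcal{C} \to \Pic C_0$ via Grothendieck existence and $H^2(C_0,\mathcal{O}_{C_0})=0$), identifies this cokernel with ${}_n \Br C$.

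For $(iii) \simeq (iv)$, I would invoke a local arithmetic duality on the regular proper arithmetic surface $\mathcal{C}/\C[[t]]$: a cup-product-and-trace construction $H^3_{C_0}(\mathcal{C},\mu_n) \times H^1(\mathcal{C},\mu_n) \to H^4_{C_0}(\mathcal{C},\mu_n^{\otimes 2}) \to \Z/n$ should furnish a perfect pairing, and proper base change identifies $H^1(\mathcal{C},\mu_n) \simeq H^1(C_0,\mu_n)$, yielding the desired isomorphism with $\Hom(H^1(C_0,\mu_n),\Z/n)$. Equivalently, Gabber's absolute purity applied componentwise on the regular components $D_i$ of $C_0$ gives $H^3_{D_i}(\mathcal{C},\mu_n) \simeq H^1(D_i,\mu_n(-1))$, which one glues and dualizes via Poincaré duality on $C_0$. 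Finiteness of all five groups then follows automatically from that of $H^1(C_0,\mu_n)$, since étale cohomology of a proper curve over $\C$ with finite coefficients is finite.

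The main technical obstacle is precisely this last duality step: the reduced fiber $C_0$ is in general not regular---components of the special fiber of a regular arithmetic surface may meet at nodal singularities---so Gabber's purity does not apply to the pair $(\mathcal{C},C_0)$ as a whole. One must either argue componentwise while carefully controlling the contribution of the intersection points, or set up and verify the appropriate Artin--Verdier-type duality for the singular pair directly on the arithmetic surface.
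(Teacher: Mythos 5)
Your proposal follows essentially the same route as the paper: (i)$\leftrightarrow$(ii) from Proposition~\ref{cohocourbes}(v), (ii)$\leftrightarrow$(iii) from the localization sequence combined with Kummer, the surjectivity of $\Pic \mathcal{C} \to \Pic C$, the vanishing of $\Br\mathcal{C} \simeq \Br C_0$ (Artin--Grothendieck) and of $H^3(\mathcal{C},\mu_n)\simeq H^3(C_0,\mu_n)$ by proper base change, and (iv)$\leftrightarrow$(v) from Proposition~\ref{Sha1Kmun}. The step you single out as the main obstacle, namely the perfect pairing
$$H^3_{C_0}(\mathcal{C},\mu_n)\times H^1(\mathcal{C},\mu_n)\to H^4_{C_0}(\mathcal{C},\mu_n^{\otimes 2})\simeq \Z/n,$$
is precisely the duality the paper invokes by citing \cite[Prop.~2.6]{EW}, so your identification of the required statement is correct. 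One remark: the difficulty you raise about the non-regularity of $C_0$ only concerns your alternative route via componentwise Gabber purity; the duality as you first formulate it pairs cohomology of the \emph{regular} scheme $\mathcal{C}$ with cohomology supported on the closed fibre, and no regularity of $C_0$ is needed for that statement, so the purity issue is not an actual obstruction to the argument as the paper (and your first formulation) sets it up.
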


 \begin{proof} On a $H^2(K,\mu_{n})=_n \Br K $ et 
$H^2(K_{v},\mu_{n})=_n \Br K_{v} $;
 la proposition \ref{cohocourbes}(v) montre que les groupes (i) et (ii) co\"incident.

On a  $\Br {\mathcal C} \oi \Br C_{0}=0$ (Artin, Grothendieck \cite[III, Thm. 3.1 et Cor. 5.8]{GrBr}).
Comme $ {\mathcal C}$ est r\'egulier, la
 restriction $\Pic  {\mathcal C} \to \Pic C$
est surjective. 
De la suite exacte de localisation pour la cohomologie \`a support
$$ H^2({\mathcal C}, \mu_{n}) \to H^2(C, \mu_{n}) \to H^3_{C_{0}}({\mathcal C}, \mu_{n}) \to H^3({\mathcal C}, \mu_{n})  $$
et de la suite de Kummer pour ${\mathcal C}$ et pour $C$
on d\'eduit donc une suite exacte
$$0 \to _n \Br (C)  \to H^3_{C_{0}}({\mathcal C}, \mu_{n}) \to H^3({\mathcal C}, \mu_{n}).$$
Le th\'eor\`eme de changement de base propre donne
$$ H^3({\mathcal C}, \mu_{n}) \oi  H^3(C_{0}, \mu_{n}) = 0.$$ On a donc 
$_n \Br (C)  \oi H^3_{C_{0}}({\mathcal C}, \mu_{n}) $.
 L'isomorphisme entre les groupes (iii) et (iv) est un cas particulier d'un th\'eor\`eme g\'en\'eral de dualit\'e 
 qu'on peut trouver explicit\'e dans \cite[Prop. 2.6]{EW}.
 
 L'identification entre le groupe (iv) et le groupe (v)  r\'esulte alors
 de la proposition \ref{Sha1Kmun}.
 \end{proof}

\begin{ex}\label{contreexrationnel}

D'apr\`es les propositions \ref{cohocourbes} et \ref{Sha2Kmun}, on
a les \'egalit\'es 
${}_{n}\Br C  = \Ker [{}_{n}\Br K   \to \prod_{v} {}_{n}\Br K_{v}]$ et ${}_{n}\Br(C)= \Hom(H^1(C_{0}, \mu_{n}), \Z/n)$.
  Supposons $H^1(C_{0}, \mu_{n})\neq 0$. Soit $\alpha \neq 0$ dans
$ \Ker [{}_{n}\Br K  \to \prod_{v} {}_{n}\Br K_{v}]$. Il existe une extension finie $L/K$ de corps,
avec $\alpha_{L}=0 \in \Br L$, extension qu'on peut prendre de degr\'e divisant $n$, 
car pour toute alg\`ebre simple centrale $D$ sur $K$, indice et exposant de $D$ co\"{\i}ncident, 
comme rappel\'e au \S \ref{lescorps}.
 Le plongement naturel $\G_{m,K} \to R_{L/K}\G_{m,L}$ induit une suite exacte de $K$-tores alg\'ebriques
$$ 1 \to \G_{m,K} \to R_{L/K}\G_{m,L} \to T \to 1.$$
Les suites de cohomologie associ\'ees sur $K$ et sur les $K_{v}$, le th\'eor\`eme de Hilbert 90 et le lemme de Shapiro
donnent une suite exacte
$$0 \to \cyr{X}^1(K,T) \to  \cyr{X}^2(K,\G_{m}) \to \cyr{X}^2(L,\G_{m}).$$
On a donc $ \cyr{X}^1(K,T) \neq 0$. On notera que le $K$-tore $T$
est $K$-birationnel \`a l'espace projectif. La situation ici est donc diff\'erente
de celle qui vaut sur les corps globaux (\cite{sansuc}, Cor.~9.7) 
ou sur les corps de fonctions d'une variable sur 
les corps $p$-adiques (\cite{HaSz}, Cor.~5.7).)

Il est facile de donner un exemple num\'erique.  
On part de la courbe elliptique constante $E$ donn\'ee par l'\'equation affine
$$y^2=x(x-1)(x+1),$$ et on note $C=E\times_{\C}\C((t))$.
 La classe de quaternions $\alpha=(x,t)$ non ramifi\'ee sur $C$ est nulle dans 
chaque $K_{v}$ mais pas dans $K$.
On peut prendre alors pour $T$ soit $X^2-xY^2=1$ (avec l'espace 
principal homog\`ene $X^2-xY^2=t$), soit 
$X^2-tY^2=1$ (avec l'espace principal homog\`ene $X^2-tY^2=x$).

\end{ex}

\section{Dualit\'e locale pour la cohomologie des groupes 
finis et des tores}

 Dans ce paragraphe, on note $K=\C((t))((u))$. 
 Toute extension finie $L$ de $K$ est de la forme $\C((t'))((u'))$.

\begin{lem} \label{compatkummer}
Soit $S$ un sch\'ema. Soit $\calt$ un tore sur $S$ et $\widehat{{\calt}}$
son groupe des caract\`eres. Alors on a un diagramme commutatif 
(dans la cat\'egorie d\'eriv\'ee born\'ee des faisceaux fppf sur 
$S$)~:

$$
\begin{CD}
\calt \otimes \widehat{\calt} @>>> \G_m \cr
@VVV @VVV \cr
(_n \calt)[1] \otimes \widehat{\calt}/n @>>> \mu_n[1],
\end{CD}
$$

\smallskip

\noindent o\`u la fl\`eche $\G_m \to \mu_n[1]$ est induite par 
le triangle exact
$$\mu_n \to \G_m \stackrel{.n}{\to} \G_m \to \mu_n[1].$$
\end{lem}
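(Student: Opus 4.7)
Le plan est d'interpr\'eter les deux compositions du carr\'e comme un m\^eme morphisme de la cat\'egorie d\'eriv\'ee, obtenu par naturalit\'e de la suite exacte de Kummer vis-\`a-vis de l'accouplement d'\'evaluation $\mathrm{ev} \colon \calt \otimes \widehat{\calt} \to \G_m$. L'observation-cl\'e est que cet accouplement, \'etant bilin\'eaire, commute \`a la multiplication par $n$ sur chacun des facteurs.

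On commencerait par constater que $\widehat{\calt}$, localement constant \`a valeurs dans les $\Z$-modules libres de type fini, est plat comme faisceau fppf sur $S$. On tensorise alors le triangle exact de Kummer pour $\calt$, \`a savoir ${}_n\calt \to \calt \stackrel{n}{\to} \calt \to {}_n\calt[1]$, par $\widehat{\calt}$, pour obtenir un nouveau triangle exact. La bilin\'earit\'e de $\mathrm{ev}$ fournit alors un morphisme de ce triangle vers le triangle de Kummer pour $\G_m$~; la fl\`eche induite \`a gauche est l'\'evaluation restreinte au $n$-torsion, qui atterrit bien dans $\mu_n$ puisque $\chi(x)^n = \chi(nx) = 1$ pour $x \in {}_n\calt$. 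Ceci produit un carr\'e commutatif interm\'ediaire, dont la fl\`eche verticale gauche est $\partial_\calt \otimes \mathrm{id}_{\widehat{\calt}}$, avec $\partial_\calt \colon \calt \to {}_n\calt[1]$ la fl\`eche bord.

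On factoriserait ensuite la fl\`eche basse ${}_n\calt \otimes \widehat{\calt} \to \mu_n$ \`a travers ${}_n\calt \otimes \widehat{\calt}/n$, ce qui est possible puisque $\mu_n$ est de $n$-torsion. Par bifonctorialit\'e du produit tensoriel, la compos\'ee de $\partial_\calt \otimes \mathrm{id}_{\widehat{\calt}}$ avec la projection $({}_n\calt \otimes \widehat{\calt})[1] \to ({}_n\calt)[1] \otimes (\widehat{\calt}/n)$ co\"\i ncide avec le produit tensoriel $\partial_\calt \otimes \pi$ des deux applications bord, $\pi \colon \widehat{\calt} \to \widehat{\calt}/n$ \'etant la projection canonique. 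On en d\'eduit le carr\'e voulu. Le point d\'elicat sera la commutativit\'e du carr\'e interm\'ediaire (c'est-\`a-dire la construction propre du morphisme de triangles exacts), qui se ram\`ene au fait que $\mathrm{ev}$ commute \`a la multiplication par $n$~; tout le reste rel\`eve du formalisme de la cat\'egorie d\'eriv\'ee et devient essentiellement automatique, une fois la platitude de $\widehat{\calt}$ invoqu\'ee.
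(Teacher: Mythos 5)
Votre argument est correct et suit essentiellement la m\^eme d\'emarche que la preuve du texte~: tensoriser le triangle de Kummer de $\calt$ par le faisceau plat $\widehat{\calt}$, utiliser la bilin\'earit\'e de l'accouplement d'\'evaluation pour obtenir un morphisme vers le triangle de Kummer de $\G_m$, puis factoriser par $\widehat{\calt}/n$ en exploitant que $({}_n\calt)[1]\otimes\widehat{\calt}=({}_n\calt)[1]\otimes\widehat{\calt}/n$. Votre r\'edaction est simplement plus d\'etaill\'ee sur la construction du morphisme de triangles, que le texte laisse implicite.
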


\begin{proof} 
On part du triangle exact 
$$_n \calt \to \calt \stackrel{.n}{\to} \calt \to (_n \calt)[1],$$
que l'on peut tensoriser par le faisceau $\widehat{\calt}$ 
(qui est localement isomorphe 
\`a $\Z^r$ pour la topologie \'etale, avec $r \geq 0$). Comme 
$\widehat{\calt}=Hom(\calt, \G_m)$, on en d\'eduit le r\'esultat 
en observant que $$(_n \calt)[1] \otimes \widehat{\calt}=(_n \calt)[1] \otimes
\widehat{\calt}/n$$
et $\widehat{\calt}/n=Hom(_n \calt, \G_m)=Hom(_n \calt, \mu_n)$.
\end{proof}

\begin{lem}
(i) Pour tout $L$-groupe ab\'elien fini $M$, et tout entier $i \geq 0$,
le groupe $H^{i}(L,M)$ est fini, et nul pour $i \geq 3$.

 (ii) Pour tout $L$-groupe de type multiplicatif $M$ sur $L$,
le groupe $H^1(L,M)$ est fini. 
\end{lem}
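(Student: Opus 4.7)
The strategy is to study $L = \C((t'))((u'))$ via the tower $L \subset L^{\nr} \subset \bar L$, where $L^{\nr} := \bar{k}((u'))$ and $\bar k = \bigcup_{n} \C((t'^{1/n}))$ is the algebraic closure of $k := \C((t'))$. The Puiseux theorem gives $\Gal(\bar k/k) \simeq \widehat{\Z}(1)$, and since $L^{\nr}$ is Henselian discretely valued with algebraically closed residue field of characteristic zero, its absolute Galois group is also procyclic, isomorphic to $\widehat{\Z}$. In particular $\cd(L) \leq \cd(L^{\nr}) + \cd(\Gal(L^{\nr}/L)) \leq 2$, and both layers of the tower have procyclic Galois group.

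For (i), the Hochschild-Serre spectral sequence
\[ E_2^{p,q} = H^p(\Gal(L^{\nr}/L), H^q(L^{\nr}, M)) \Longrightarrow H^{p+q}(L, M) \]
satisfies $E_2^{p,q} = 0$ as soon as $p \geq 2$ or $q \geq 2$, which directly gives $H^i(L, M) = 0$ for $i \geq 3$. For finiteness, note that the continuous action of a procyclic profinite group on a finite module factors through a finite cyclic quotient, so all cohomology groups with such coefficients are finite; applying this observation twice (first for $H^q(L^{\nr}, M)$ with $q \in \{0,1\}$, then for $H^p(\Gal(L^{\nr}/L), -)$ on these finite modules) yields finiteness of every $E_2^{p,q}$, hence of $H^i(L, M)$.

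For (ii), let $\widehat{M}$ denote the character module of $M$. Dualising the exact sequence $0 \to \widehat{M}_{\tors} \to \widehat{M} \to \widehat{M}/\tors \to 0$ yields an exact sequence $1 \to T \to M \to F \to 1$ of $L$-groups of multiplicative type, with $T$ a torus and $F$ a finite group. The resulting piece $H^1(L, T) \to H^1(L, M) \to H^1(L, F)$ of the long exact sequence, combined with finiteness of $H^1(L, F)$ by (i), reduces the problem to showing $H^1(L, T)$ finite. Pick a finite Galois extension $L'/L$ of degree $d$ splitting $T$; over $L'$ one has $T \simeq \G_m^r$, so $H^1(L', T) = 0$ by Hilbert 90, and restriction-corestriction forces $d \cdot H^1(L, T) = 0$, i.e.\ $H^1(L, T) = H^1(L, T)[d]$. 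The Kummer sequence $0 \to {}_d T \to T \by{d} T \to 0$ then yields a surjection $H^1(L, {}_d T) \twoheadrightarrow H^1(L, T)[d]$, and its source is finite by (i) since ${}_d T$ is a finite $L$-group of multiplicative type. Hence $H^1(L, T)$, and therefore $H^1(L, M)$, is finite.

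The only non-routine step is the finiteness of $H^1(L, T)$ for a torus: although torus cohomology is not directly controlled by (i), the combination of Hilbert 90 and restriction-corestriction (to bound the exponent of $H^1(L, T)$) together with the Kummer sequence (to reduce to a finite module) converts the question into the finite-coefficient case already settled in (i).
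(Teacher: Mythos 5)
Your argument is correct, but for part (i) it takes a genuinely different route from the paper's. The paper chooses a finite Galois extension $L'/L$ splitting $M$ and applies Hochschild--Serre to $L'/L$, so that finiteness rests on the finiteness of $H^q(L',\mu_n)$ for $q\le 2$ (i.e.\ on $L'^{\times}/L'^{\times n}$ and ${}_n\Br L'$) and the vanishing in degrees $\ge 3$ on $\cd(L)=2$ recorded in \S 1; you instead use the tower $L\subset L^{\nr}\subset \overline{L}$, whose two layers have procyclic Galois groups (Puiseux for the residue field, tameness for the totally ramified part). Your decomposition makes the vanishing for $i\ge 3$ immediate from the shape of the $E_2$-page and reduces finiteness to the fact that a procyclic group has finite cohomology with finite coefficients, at no cost in structure theory of $L'^{\times}$ or $\Br L'$. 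Two points of hygiene: the maximal unramified extension of $L$ inside $\overline{L}$ is the union $\bigcup_{k'/k \text{ finie}} k'((u'))$ rather than $\overline{k}((u'))$ itself (harmless, as the two have the same Galois cohomology), and the finiteness of $H^p(\widehat{\Z},M)$ should be justified by the computation $H^0=M^{\sigma}$, $H^1=M_{\sigma}$, $H^{\ge 2}=0$ (or a limit over inflations) rather than by identifying it with the cohomology of a finite cyclic quotient, which would give the wrong answer in degrees $\ge 2$. For part (ii), whose proof the paper omits entirely, your d\'evissage $1\to T\to M\to F\to 1$ combined with Hilbert~90, restriction--corestriction and the Kummer sequence is the standard argument and is complete.
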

\begin{proof}
(i) Soit $L'/L$ une extension finie galoisienne d\'eployant $M$.
La finitude r\'esulte via la suite spectrale de Hochschild-Serre 
de la nullit\'e de $H^{i}(L',\mu_{n})$ pour $i\geq 3$ et de la finitude des
 groupes $H^1(L',\mu_{n})$ ainsi que $H^2(L',\mu_{n})= _n \Br L' $.
\end{proof}

\begin{prop}\label{dualitelocalefinie}
Soit $F$ un module galoisien fini, de dual de Cartier
$F^0=Hom(F,\Q/\Z(1))$. Pour $0 \leq i \leq 2$,
on  a des dualit\'es parfaites de groupes finis
$$H^{i}(K  , F) \times H^{2-i}(K , F^0) \to H^2(K ,\Q/\Z(1))=\Q/\Z(-1).$$
\end{prop}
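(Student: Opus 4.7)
The plan is to define the pairing as a cup product and prove its non-degeneracy by devissage to a concrete base case. Let $n$ be an integer killing $F$, so that $F^0 = \Hom(F,\mu_n)$. The evaluation map $F \otimes F^0 \to \mu_n$ furnishes the cup-product pairing
\[H^i(K,F) \times H^{2-i}(K,F^0) \longrightarrow H^2(K,\mu_n) \hookrightarrow H^2(K,\Q/\Z(1)) = \Q/\Z(-1),\]
where the last identification uses the analogue of Proposition~\ref{cohocourbes}(iv) for the 2-dimensional local field $K = \C((t))((u))$, whose Brauer group is identified with $\Q/\Z(-1)$ via iterated residue maps. All groups in sight are finite by the preceding lemma, so only non-degeneracy remains to prove.

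The first step is a standard devissage. Since every finite extension of $K$ is again of the form $\C((t'))((u'))$, the class of fields under consideration is closed under finite extensions. Choose a finite Galois extension $L/K$ trivializing the action on $F$, and use the canonical embedding $F \hookrightarrow \operatorname{Ind}_{G_L}^{G_K} F$ (whose cokernel is still killed by $n$). Shapiro's lemma identifies the cohomology of the induced module over $K$ with the cohomology of the constant module $F$ over $L$, and the analogous identification on the dual side is compatible with the cup products. Applying the five-lemma to the long exact sequences on both sides of the pairing, and arguing by induction on $|F|$, reduces the statement to the case where $F$ is a constant cyclic module $\Z/\ell\Z$ of prime order, with dual $\mu_\ell$ also a trivial Galois module (since $\mu_\infty \subset \C \subset K$).

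For this base case, one exploits the Hochschild--Serre spectral sequence for the tower $K \subset K^{\mathrm{nr}} \subset \ov K$, where $K^{\mathrm{nr}} = \ov k((u))$ is the maximal unramified extension. Both of the Galois groups
\[\Gal(K^{\mathrm{nr}}/K) = \Gal(\ov k/k) = \widehat{\Z}(1) \quad\text{and}\quad \Gal(\ov K/K^{\mathrm{nr}}) = \widehat{\Z}(1)\]
(the latter by Puiseux's theorem applied to $\ov k((u))$) have cohomological dimension $1$, so the spectral sequence collapses and each $H^i(K,-)$ fits in a short exact sequence built from $H^0$ and $H^1$ of the inertia group with coefficients in $H^0$ and $H^1$ of the residue group. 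Cup product is compatible with the resulting two-step filtration, and on the associated graded pieces the pairing factors as a product of two instances of the elementary duality for finite-module cohomology of $\widehat{\Z}(1)$, each of which is obviously perfect.

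The main technical obstacle is tracking the twists through the Hochschild--Serre filtration: one must verify that the two copies of $\widehat{\Z}(1)$ contribute dually (one through $H^0 \times H^1$ and the other through $H^1 \times H^0$) and that the Tate twist in $\Q/\Z(-1)$ matches the $\widehat{\Z}(1)$-action on $\mu_n$ precisely when these contributions are combined. Once this compatibility is checked, non-degeneracy on graded pieces lifts to non-degeneracy of the full pairing by a standard filtration argument, completing the proof.
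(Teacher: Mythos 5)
The paper does not actually prove this proposition: it disposes of it with the single citation \cite[\S 5.1, Exercice 2]{CG}, so what you have written is in effect a solution to Serre's exercise rather than a variant of an argument in the text. Your overall strategy is the standard and correct one: define the pairing by cup product into $H^2(K,\mu_n)\subset H^2(K,\Q/\Z(1))=\Q/\Z(-1)$, reduce to a module with trivial action, and then exploit the two-step Hochschild--Serre filtration for $K\subset K^{\mathrm{nr}}\subset \overline K$, where both $\Gal(K^{\mathrm{nr}}/K)\simeq\widehat\Z(1)$ and the inertia group $I=\Gal(\overline K/K^{\mathrm{nr}})\simeq\widehat\Z(1)$ have cohomological dimension $1$. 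The twist bookkeeping you flag as the main obstacle does come out right: for $F=\Z/\ell$ one has $H^1(I,\mu_\ell)\simeq\Z/\ell$ canonically, the graded pieces pair through the elementary perfect duality $H^q(I,F)\times H^{1-q}(I,F^0)\to H^1(I,\mu_\ell)$ for the procyclic group $I$ followed by the same duality over $\Gal(\overline k/k)$, and $H^1(\Gal(\overline k/k),\Z/\ell)\simeq\Z/\ell(-1)$ matches the target $H^2(K,\mu_\ell)=\Z/\ell(-1)$.

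One step is mis-stated and, as literally written, would fail: the d\'evissage cannot run by induction on $|F|$. The cokernel $Q$ of $F\hookrightarrow \operatorname{Ind}_{G_L}^{G_K}(F_{|L})$ has order $|F|^{[L:K]-1}$, which is larger than $|F|$, so the five lemma applied to $0\to F\to I\to Q\to 0$ gives nothing by that induction. The standard repair is the usual double d\'evissage: first establish the statement for induced modules via Shapiro and the constant case over $L$ (which your reduction does provide), then use \emph{both} an embedding of $F$ into an induced module \emph{and} a surjection from an induced module onto $F$, and run a four-lemma chase simultaneously in all degrees $0\le i\le 2$ (first proving injectivity of every map $H^i(K,F)\to H^{2-i}(K,F^0)^D$ for all finite $F$, then surjectivity). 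With that correction, and granting the multiplicativity of the Hochschild--Serre spectral sequence that you invoke for the compatibility of cup products with the filtration, the argument is complete.
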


\begin{proof}
 \cite[\S 5.1, Exercice 2]{CG}.
\end{proof}

Pour $T$ un $K$-tore, et $\widehat{T}=Hom(T,\G_{m,K})$, on d\'efinit un accouplement
$$H^{i}(K, T) \times H^{2-i}(K, \widehat{T}) \to  H^2(K,\G_{m})= \Q/\Z(-1).$$

\begin{prop}\label{dualitelocaletore}
Pour $i=1$, l'accouplement ci-dessus est un accouplement parfait de 
groupes finis. Pour $i=0$, il induit une dualit\'e parfaite entre  le compl\'et\'e profini
de $H^0(K,T)$ et le groupe discret 
$H^2(K,\widehat T)$, et pour $i=2$ une dualit\'e parfaite entre le 
groupe discret $H^2(K,T)$ et le compl\'et\'e profini de
$H^0(K,\widehat T)$.
\end{prop}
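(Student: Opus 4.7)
L'approche consiste à déduire cette dualité pour les tores de la dualité locale pour les modules finis (Proposition~\ref{dualitelocalefinie}) par passage à la limite via la théorie de Kummer, sur le modèle classique de la dualité de Tate sur les corps $p$-adiques. On observe d'abord que le dual de Cartier de ${}_n T$ est canoniquement $\widehat T / n$: comme $T = \Hom(\widehat T, \G_m)$, on a ${}_n T = \Hom(\widehat T/n, \mu_n)$, et donc $({}_n T)^0 = \Hom({}_n T, \mu_n) = \widehat T/n$ par bidualité. La Proposition~\ref{dualitelocalefinie} appliquée à $F = {}_n T$ fournit alors, pour $0 \leq i \leq 2$, des dualités parfaites de groupes finis
$$H^i(K, {}_n T) \times H^{2-i}(K, \widehat T/n) \to H^2(K, \mu_n) \subset \Q/\Z(-1).$$

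Le Lemme~\ref{compatkummer} assure que ces accouplements finis sont compatibles avec l'accouplement cherché $T \otimes \widehat T \to \G_m$ via les morphismes de bord des suites exactes longues associées aux suites de Kummer
$$0 \to {}_n T \to T \stackrel{\cdot n}{\to} T \to 0 \quad \text{et} \quad 0 \to \widehat T \stackrel{\cdot n}{\to} \widehat T \to \widehat T/n \to 0.$$
Ces suites fournissent les injections $H^i(K, T)/n \hookrightarrow H^{i+1}(K, {}_n T)$ et les surjections $H^i(K, \widehat T/n) \twoheadrightarrow {}_n H^{i+1}(K, \widehat T)$, qui combinées au Lemme~\ref{compatkummer} identifient l'accouplement entre $T$ et $\widehat T$ à une limite d'accouplements parfaits finis.

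Pour le cas $i = 1$, il faut d'abord établir la finitude de $H^1(K, T)$ et $H^1(K, \widehat T)$. Soit $L/K$ une extension galoisienne finie déployant $T$, de degré $n_0 = [L:K]$. Par Hilbert 90 sur $L$ (où $T \simeq \G_m^r$), $H^1(L, T) = 0$, donc $H^1(K, T) = H^1(\Gal(L/K), T(L))$ est $n_0$-torsion; comme ${}_{n_0} H^1(K, T)$ est un quotient du groupe fini $H^1(K, {}_{n_0} T)$ via Kummer, $H^1(K, T)$ est fini. La finitude de $H^1(K, \widehat T)$ est analogue: $\widehat T$ étant libre, $H^1(L, \widehat T) = 0$, d'où $H^1(K, \widehat T) = H^1(\Gal(L/K), \widehat T)$ fini. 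La perfection de la dualité pour $i = 1$ en résulte en choisissant $n$ multiple commun des exposants de $H^1(K, T)$ et $H^1(K, \widehat T)$: le diagramme commutatif issu du Lemme~\ref{compatkummer} ramène alors la dualité cherchée à celle de la Proposition~\ref{dualitelocalefinie}.

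Pour $i = 0$ (et symétriquement $i = 2$), on passe à la limite projective sur $n$: l'injection $H^0(K, T)/n \hookrightarrow H^1(K, {}_n T)$ et la surjection $H^1(K, \widehat T/n) \twoheadrightarrow {}_n H^2(K, \widehat T)$, combinées à la dualité finie, donnent à la limite une dualité parfaite entre le complété profini $H^0(K, T)^\wedge = \varprojlim_n H^0(K, T)/n$ et le groupe discret de torsion $H^2(K, \widehat T) = \varinjlim_n {}_n H^2(K, \widehat T)$. L'obstacle principal sera la justification rigoureuse de ce passage à la limite --- en particulier l'annulation des modules de Tate (conséquence de la finitude établie à l'étape précédente) et la nullité de $H^3(K, -)$ grâce à $\cd(K) = 2$ --- afin que les accouplements limites restent parfaits au bon sens topologique.
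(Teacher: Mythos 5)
Votre démonstration suit essentiellement la même voie que celle de l'article : on déduit l'énoncé de la dualité locale pour les modules finis (Proposition~\ref{dualitelocalefinie}) appliquée à ${}_n T$, de dual $\widehat T/n$, via les deux diagrammes commutatifs issus des suites de Kummer et du Lemme~\ref{compatkummer}, la finitude de $H^1(K,T)$ et $H^1(K,\widehat T)$ (Hilbert 90 et $H^1(K,\Z)=0$) permettant de choisir $n$ annulant ces groupes, puis un passage à la limite sur $n$ pour les cas $i=0$ et $i=2$. Seule précision : pour la perfection en $i=1$ il faut bien exploiter les \emph{deux} diagrammes (en degrés $(0,1)$ et $(1,2)$), l'un donnant la surjectivité et l'autre l'injectivité de $H^1(K,T)\to H^1(K,\widehat T)^D$, ce que votre rédaction contient implicitement.
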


\begin{proof}
Notons $A^{D}:=\Hom(A,\Q/\Z(-1))$.
 Pour tout $n >0$, on a un diagramme commutatif 
(via le lemme~\ref{compatkummer}) de groupes finis, 
\`a lignes exactes 
{\small
$$
\begin{CD}
0 @>>> H^0(K ,T)/n @>>> H^1(K ,  _n T) @>>> _n H^1(K ,T) @>>> 0 \cr
&& @VVV @VVV @VVV \cr
0 @>>> (_n H^2(K ,\widehat T))^D @>>> H^1(K , \widehat T/n)^D @>>> 
(H^1(K ,\widehat T)/n)^D @>>> 0
\end{CD}
$$
}
La fl\`eche verticale du milieu est un isomorphisme d'apr\`es la  Proposition \ref{dualitelocalefinie}.
 Comme $H^1(K ,T)$ et $H^1(K ,\widehat T)$ 
sont finis (ils sont de type cofini, et d'exposant fini via Hilbert 90
et $H^1(K ,\Z)=0$), on en d\'eduit en prenant $n$ convenable une 
fl\`eche surjective $H^1(K ,T) \to H^1(K ,\widehat T)^D$. On \'ecrit 
alors un autre diagramme commutatif de groupes finis, 
\`a lignes exactes
$$
{\small
\begin{CD}
0 @>>> H^1(K ,T)/n @>>> H^2(K , _n T) @>>> _n H^2(K ,T) @>>> 0 \cr
&& @VVV @VVV @VVV \cr
0 @>>> (_n H^1(K ,\widehat T))^D @>>> H^0(K ,\widehat T/n)^D @>>>
(H^0(K ,\widehat T)/n)^D @>>> 0
\end{CD}
}
$$
  dont la fl\`eche verticale du milieu est  un isomorphisme
 d'apr\`es la  Proposition \ref{dualitelocalefinie}. 
En prenant encore $n$ convenable, on en d\'eduit une fl\`eche 
injective $H^1(K ,T) \to H^1(K ,\widehat T)^D$, ce qui par cardinalit\'e 
donne finalement que l'accouplement entre $H^1(K ,T)$ est 
$H^1(K ,\widehat T)$ est un accouplement parfait de groupes finis.

\smallskip

Les deux diagrammes donnent alors que les deux fl\`eches $T(K )/n \to 
(_n H^2(K ,\widehat T))^D$ et $H^2(K ,T)_n \to (H^0(K ,\widehat T)/n) ^D$
sont \'egalement des isomorphismes, ce qui ach\`eve la preuve de la 
proposition en passant \`a la limite projective (resp. inductive) sur $n$.
\end{proof}

\begin{rema} \label{topolocale}
{\rm
L'application naturelle de $T(K) $  \'equip\'e de la topologie induite par celle de $K$,
vers le compl\'et\'e profini de $T(K)$
(qui est aussi la limite projective des $T(K)/n$ pour $n>0$ entier vu que 
chaque $T(K)/n$ est fini)
est continue. L'application qui va de
$T(K)$ vers le dual 
du groupe discret $H^2(K,\widehat T)$ est donc aussi continue.
}
\end{rema}

\medskip
De la proposition ci-dessus on d\'eduit l'\'enonc\'e suivant, analogue d'un r\'esultat 
connu sur les corps $p$-adiques.
\begin{cor}
Soit $K =\C((u))((t))$. Pour tout $K$-tore coflasque $Q$, on a $H^1(K ,Q)=0$.
En  d'autre termes, tout espace principal homog\`ene sous un tel $K $-tore
poss\`ede un point rationnel.
\end{cor}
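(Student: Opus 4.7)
The proof will be a one-line deduction from Proposition~\ref{dualitelocaletore}. The plan is to reduce the vanishing of $H^1(K,Q)$ to the vanishing of $H^1(K,\widehat{Q})$ by invoking the perfect pairing just proved, and then to observe that the second vanishing is built into the very definition of a coflasque torus.

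First I recall the terminology: a $K$-tore $Q$ is called coflasque when its character module $\widehat{Q}=\Hom(Q,\G_m)$ is a coflasque $\Gal(\overline K/K)$-module, i.e.\ satisfies $H^1(H,\widehat{Q})=0$ for every open subgroup $H$ of the absolute Galois group of $K$. Specialising to $H$ equal to the whole Galois group gives $H^1(K,\widehat{Q})=0$ for free, with no cohomological work.

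Next I apply Proposition~\ref{dualitelocaletore} with $T=Q$ and $i=1$: the cup-product
$$H^1(K,Q)\times H^1(K,\widehat{Q}) \longrightarrow H^2(K,\G_m)=\Q/\Z(-1)$$
is a perfect pairing of finite abelian groups. Since the right-hand factor vanishes, so does the left, and thus $H^1(K,Q)=0$, as desired.

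The second assertion is then the standard translation: the pointed set $H^1(K,Q)$ classifies principal homogeneous spaces under $Q$ up to $K$-isomorphism, and a $Q$-torsor is trivial precisely when it carries a $K$-rational point; so the vanishing of $H^1(K,Q)$ means every such torsor has a $K$-point. I do not anticipate any genuine obstacle here — the entire content of the corollary sits in the local duality proposition already established. The only point to verify carefully is the convention on which direction ``coflasque'' is phrased (equivalently, whether the hypothesis bears on $Q$ or on $\widehat{Q}$), but both formulations coincide under the standard convention used in \cite{sansuc}.
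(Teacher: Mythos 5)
Your proof is correct and is essentially the paper's own argument: the authors deduce the corollary from Proposition~\ref{dualitelocaletore} (the perfect pairing $H^1(K,Q)\times H^1(K,\widehat Q)\to \Q/\Z(-1)$) and simply recall that coflasque means $H^1(L,\widehat Q)=0$ for every finite extension $L/K$, hence in particular $H^1(K,\widehat Q)=0$. Your convention check on the definition of coflasque matches the one used in the paper.
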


\begin{proof} 
En effet par d\'efinition, 
un $K$-tore est coflasque s'il satisfait l'\'egalit\'e 
$H^1(L,\widehat{Q})=0$ pour toute extension finie de corps $L/K$.
\end{proof}

\section{Cohomologie des courbes : localisation et finitude}

On commence par quelques rappels g\'en\'eraux.

\begin{prop} \label{injectprop}
Soit $A$ un anneau semilocal r\'egulier int\`egre de corps des 
fractions $K$.
Pour tout $A$-groupe de type multiplicatif lisse $G$, et tout entier $i=0,1,2$,
les applications de restriction $H^{i}(A,G) \to H^{i}(K,G_{K})$ sont 
injectives.
\end{prop}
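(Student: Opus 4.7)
Le cas $i=0$ est imm\'ediat, $G$ \'etant s\'epar\'e sur $\Spec A$ int\`egre. Pour les cas $i=1,2$ on proc\'ederait par d\'evissage \`a partir de deux ingr\'edients fondamentaux valables sur un anneau semilocal r\'egulier int\`egre $A$ de corps des fractions $K$~: d'une part $\Pic(A)=0$, d'autre part le th\'eor\`eme d'Auslander-Goldman donnant l'injectivit\'e $\Br(A) \hookrightarrow \Br(K)$ (cf.\ \cite[II]{GrBr}). Ces deux faits fournissent directement l'\'enonc\'e pour $G=\G_m$ ($H^1(A,\G_m)=\Pic(A)=0$ et $H^2(A,\G_m)=\Br(A)$).

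On en d\'eduit le cas $G=\mu_n$ par une chasse au diagramme dans les suites exactes de Kummer $1 \to \mu_n \to \G_m \to \G_m \to 1$ \'ecrites sur $\Spec A$ et sur $\Spec K$~: la nullit\'e de $\Pic(A)$ traite $i=1$, et sa combinaison avec l'injectivit\'e du groupe de Brauer traite $i=2$. Tout $A$-groupe fini de type multiplicatif lisse $F$ \'etant d\'eploy\'e par une extension finie \'etale $B/A$ (o\`u $B$ est encore semilocal r\'egulier), on \'etend le r\'esultat \`a $F$ via le lemme de Shapiro appliqu\'e \`a la cohomologie \'etale.

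Pour un tore $T$ sur $A$, j'utiliserais une r\'esolution par un tore quasi-trivial du type $1 \to T \to Q \to C \to 1$, avec $Q=\prod_i R_{B_i/A}\G_m$ ($B_i/A$ finies \'etales) et $C$ coflasque, \`a la Colliot-Th\'el\`ene-Sansuc. Via Shapiro, les groupes $H^i(A,Q)$ (resp.~$H^i(K,Q_K)$) s'expriment en termes de $\Pic(B_i)$ et $\Br(B_i)$ (resp.~des m\^emes invariants pour les corps des fractions $\mathrm{Frac}(B_i)$), d'o\`u l'injectivit\'e $H^i(A,Q)\hookrightarrow H^i(K,Q_K)$ par les cas pr\'ec\'edents. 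Une chasse au diagramme sur les suites exactes longues associ\'ees, coupl\'ee \`a une r\'ecurrence analogue pour le tore $C$, donne l'injectivit\'e pour $T$.

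Enfin, pour un groupe $G$ de type multiplicatif lisse g\'en\'eral, la filtration de $\hat G$ par sa torsion fournit par dualit\'e une suite exacte $1 \to T \to G \to F \to 1$ avec $T$ tore et $F$ fini de type multiplicatif lisse, et l'on conclut par un ultime d\'evissage. Le principal obstacle technique est le th\'eor\`eme d'Auslander-Goldman pour l'injectivit\'e du groupe de Brauer, qui est classique mais profond, ainsi que la gestion des chasses au diagramme simultan\'ement aux degr\'es $1$ et $2$ dans le cas des tores; le reste rel\`eve de manipulations cohomologiques standard.
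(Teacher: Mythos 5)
L'article ne d\'emontre pas cette proposition~: il se contente de renvoyer \`a la Prop.~2.2 de \cite{ctcras}. Votre tentative de preuve autonome par d\'evissage ne peut donc pas \og co\"{\i}ncider \fg{} avec celle du texte, et elle comporte deux lacunes r\'eelles. D'abord, le lemme de Shapiro ne traite que les coefficients induits $R_{B/A}\mu_n$~; il ne permet pas de descendre le r\'esultat d'un groupe fini de type multiplicatif d\'eploy\'e par $B/A$ \`a sa forme tordue sur $A$. Il faudrait pour cela la suite spectrale de Hochschild--Serre (avec le probl\`eme des sous-quotients aux termes $E_\infty$) ou un plongement dans un objet induit, ce qui ram\`ene au second probl\`eme. (Signalons aussi, plus mineur, que l'injectivit\'e de $A^{\times}/A^{\times n} \to K^{\times}/K^{\times n}$ utilise la normalit\'e de $A$ et pas seulement $\Pic(A)=0$.)

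Le point essentiel est que vos chasses au diagramme pour les tores ne se referment pas. Pour $i=1$, la r\'esolution $1 \to T \to Q \to C \to 1$ donne, puisque $H^1(A,Q)=H^1(K,Q)=0$, des identifications $H^1(A,T)=\Coker(Q(A)\to C(A))$ et $H^1(K,T)=\Coker(Q(K)\to C(K))$~; l'injectivit\'e voulue \'equivaut donc exactement \`a dire qu'un point de $C(A)$ qui se rel\`eve \`a $Q(K)$ se rel\`eve \`a $Q(A)$, c'est-\`a-dire qu'un $T$-torseur sur $\Spec A$ trivial au point g\'en\'erique est trivial --- l'\'enonc\'e m\^eme \`a d\'emontrer. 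Si l'on passe par la r\'esolution flasque $1 \to S \to Q \to T \to 1$, on r\'eduit le $H^1$ de $T$ au $H^2$ du tore flasque $S$, puis celui-ci au $H^1$ d'un tore coflasque, puis ce dernier au $H^2$ d'un nouveau tore flasque~: la r\'ecurrence ne termine jamais. La d\'emonstration de \cite{ctcras} n'est pas un d\'evissage formel~: elle combine la suite spectrale de Hochschild--Serre pour un rev\^etement \'etale $B/A$ d\'eployant $G$ (o\`u $\Pic(B)=0$) avec la suite des diviseurs $1 \to B^{\times} \to L^{\times} \to {\rm Div}(B) \to 0$, dont le terme de droite est un module de permutation --- c'est l\`a qu'intervient l'hypoth\`ese de factorialit\'e locale, et c'est l'ingr\'edient qui manque \`a votre argument.
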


\begin{proof}
C'est la proposition 2.2 page 450 de \cite{ctcras}, laquelle 
vaut plus g\'en\'eralement pour un anneau semilocal g\'eom\'etriquement 
localement factoriel.
\end{proof}

\begin{prop}\label{extension}
Soit $X$ un sch\'ema de Dedekind int\`egre de corps des fractions $K$.
Soit $U \subset X$ un ouvert non vide. 

(i) Soit $M$ un $X$-sch\'ema en groupes finis ab\'eliens
\'etale d'ordre premier aux caract\'eristiques r\'esiduelles de $X$. 
Si une classe dans $H^{i}(U,M)$ a une image nulle dans tous les $H^{i}(K_{v},M)$
pour $v \notin U$, alors elle est dans l'image de  la restriction $H^{i}(X,M) \to H^{i}(U,M)$.

(ii) Soit $M$ un $X$-sch\'ema en groupes r\'eductifs. Si une classe 
de $H^{1}(U,M)$ a une image nulle dans tous les $H^{1}(K_{v},M)$
pour $v \notin U$, alors elle est dans l'image de la restriction 
$H^{1} (X,M) \to H^{1}(U,M)$.
\end{prop}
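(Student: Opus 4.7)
I would prove the two parts by parallel but distinct local arguments concentrated at the finite set $Z := X \setminus U$ of missing closed points.

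For (i), the key input is the \'etale excision long exact sequence
\begin{equation*}
\cdots \to H^i(X, M) \to H^i(U, M) \xrightarrow{\partial} \bigoplus_{v \in Z} H^{i+1}_v(X, M) \to H^{i+1}(X, M) \to \cdots
\end{equation*}
together with the excision isomorphism $H^{i+1}_v(X, M) \cong H^{i+1}_v(\mathcal{O}_{X,v}^h, M)$ and the henselian local sequence
\begin{equation*}
\cdots \to H^i(\mathcal{O}_{X,v}^h, M) \to H^i(K_v^h, M) \to H^{i+1}_v(\mathcal{O}_{X,v}^h, M) \to \cdots .
\end{equation*}
Under these identifications, the $v$-component of $\partial \xi$ is the image of $\xi|_{K_v^h}$ under the local boundary. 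Since the order of $M$ is invertible in the residue field $\kappa(v)$, the natural map $H^i(K_v^h, M) \to H^i(K_v, M)$ is an isomorphism (the Galois theories of the henselization and of the completion agree in the relevant range). The hypothesis that $\xi|_{K_v} = 0$ for every $v \in Z$ then forces $\partial \xi = 0$, so $\xi$ lifts to $H^i(X, M)$.

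For (ii), $H^1(-, M)$ is only a pointed set and the above long exact sequence is unavailable; I would argue instead by patching. Viewing $\xi$ as the isomorphism class of an $M$-torsor $T$ over $U$, the hypothesis $T(K_v) \neq \emptyset$ combined with Artin--Greenberg approximation applied to the smooth $K_v^h$-scheme $T \times_U \mathrm{Spec}\, K_v^h$ gives $T(K_v^h) \neq \emptyset$, so $T|_{K_v^h}$ is the trivial torsor. One then glues $T$ on $U$ with the trivial $M$-torsor on each $\mathrm{Spec}\, \mathcal{O}_{X,v}^h$ using the standard patching description for torsors under a smooth (in particular reductive) group scheme on a Dedekind scheme: the category of $M$-torsors on $X$ is the fibre product of the categories of $M$-torsors on $U$ and on $\bigsqcup_{v \in Z} \mathrm{Spec}\, \mathcal{O}_{X,v}^h$ over the category of $M$-torsors on the common generic fibre $\bigsqcup_{v \in Z} \mathrm{Spec}\, K_v^h$ (a faithfully flat descent statement in the spirit of Beauville--Laszlo). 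The trivializations furnished by Greenberg approximation supply the required gluing data.

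The main obstacle in this plan is the patching step in (ii): the absence of a non-abelian excision sequence forces one either to invoke a sufficiently general Beauville--Laszlo-type gluing result or to write out a direct faithfully flat descent for the covering $U \sqcup \bigsqcup_{v \in Z} \mathrm{Spec}\, \mathcal{O}_{X,v}^h \to X$. The smoothness of $M$ is essential at two distinct stages -- the approximation step and the descent step -- which is what restricts the statement to reductive coefficients. Everything else is formal once these two ingredients are in place.
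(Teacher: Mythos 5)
Your proposal is correct and matches the paper's (very terse) proof: for (i) the paper likewise invokes the localization/excision sequence for étale cohomology, its functoriality with respect to the (henselized or completed) local rings, and purity, exactly as you do. For (ii) the paper does not write out the gluing argument at all but simply cites Harder (Lemma 4.1.3) and Gille--Pianzola (Cor. 4.8), which are precisely the patching/descent statement you correctly identify as the crux; your Greenberg-approximation step to pass from $K_v$ to $K_v^h$ is also the paper's standard device elsewhere.
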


\begin{proof}
Pour (i), il suffit de consid\'erer la suite de localisation pour la
cohomologie \'etale de $X$, qui est fonctorielle, et le th\'eor\`eme de
puret\'e pour la cohomologie sur un anneau de valuation discr\`ete
(\cite{santa}, section~3.3).
On peut ici utiliser 
soit les hens\'elis\'es soit les compl\'et\'es.
Pour (ii), voir \cite[Lemma 4.1.3]{Harder} ou \cite[Cor. 4.8]{GiPi}.
\end{proof}

Dans la suite de ce paragraphe, on consid\`ere 
$C$ une courbe int\`egre, projective et lisse sur un corps de 
caract\'eristique z\'ero $k$,
et $U \subset C$ un ouvert de Zariski non vide. On note $K=k(C)$.

Comme $C$ est une courbe r\'eguli\`ere, pour tout faisceau \'etale $\calf$ 
sur $U$ (dont on note $F$ la restriction \`a la fibre g\'en\'erique), 
on a pour $i \geq 0$ de longues suites exactes \cite[II, Prop. 2.3]{milneADT}~:

\begin{equation} \label{longlocal}
\dots \to H^i_c(U,\calf) \to H^i(U,\calf) \to \bigoplus_{v \not \in U} H^i(K_v ^h,F) \to H^{i+1}_c(U,\calf) \dots
\end{equation}
o\`u $K_v^h=\fraco{\calo_v ^h}$ est le corps des fractions 
du hens\'elis\'e $\calo_v^h$ de l'anneau local de $C$ en $v$ (pour simplifier
les notations, on notera souvent $H^i(K_v ^h,F)$ pour $H^i(K_v^h,F_v)$, o\`u
$F_v$ est la restriction de $F$ \`a $K_v ^h$).
Si $i \geq 1$ et 
$F$ est repr\'esent\'e par un $K$-sch\'ema en groupes localement de type fini 
(par exemple un tore, un $K$-groupe fini, ou le module des caract\`eres 
d'un tore), 
on peut remplacer $K_v^h$ par le compl\'et\'e $K_v$ via le th\'eor\`eme
d'approximation de Greenberg (cf. \cite{dhsza1}, Lemme~2.7
pour un argument d\'etaill\'e). Les propri\'et\'es de fonctorialit\'e 
de la cohomologie \`a support compact sont r\'esum\'ees dans la 
proposition suivante, dont certains points sont d\'ej\`a discut\'es 
dans la Prop. 4.2. de \cite{HaSz}~:

\begin{prop} \label{compatcoh}
Avec les notations ci-dessus, on consid\`ere deux ouverts de Zariski 
non vides $U$ et $V$ de $C$ avec $V \subset U$. Soit $\calf$ 
un faisceau \'etale sur $U$. Soit $i \geq 0$ un entier. 

\smallskip

a) L'inclusion $V \subset U$ induit une fl\`eche 
$H^i_c(V,\calf) \to H^i_c(U,\calf)$ telles que si 
$W \subset V$, les fl\`eches $$H^i_c(W,\calf) \to 
H^i_c(V,\calf) \to H^i_c(U,\calf)$$
soient compatibles.

\smallskip

b) Le diagramme 
$$
\begin{CD}
H^i(U,\calf) @>>> H^i(V,\calf) \cr
@VVV @VVV \cr
\bigoplus_{v \not \in U} H^i(K_v ^h,F) @<<< \bigoplus_{v \not \in V} 
H^i(K_v ^h,F)
\end{CD}
$$
est commutatif, o\`u la fl\`eche horizontale du bas est la projection. 

\smallskip

c) Le diagramme  
$$
\begin{CD}
\bigoplus_{v \not \in U}  H^i(K_v ^h,F) @>>> \bigoplus_{v \not \in V}    
H^i(K_v ^h,F) \cr
@VVV @VVV \cr
H^{i+1}_c(U,\calf) @<<< H^{i+1}_c(V,\calf)
\end{CD}
$$
est commutatif, o\`u la fl\`eche horizontale du haut est l'injection 
canonique $(f_v) \mapsto ((f_v),0,0,...,0)$.

\smallskip

d) Le diagramme
$$
\begin{CD}
H^i_c(V,\calf) @>>> H^i_c(U,\calf) \cr
@VVV @VVV \cr
H^i(V,\calf) @<<< H^i(U,\calf) 
\end{CD}
$$
est commutatif. En particulier si on pose 
$$\sD^i(U,\calf):={\rm Im} \, [H^i_c(U,\calf) \to H^i(K,F)],$$
on a $\sD^i(V,\calf) \subset \sD^i(U,\calf)$.

\smallskip

e) Pour $i \geq 1$, on a un complexe 
\begin{equation} \label{complexf}
\bigoplus_{v \in C^{(1)}} H^{i-1}(K_v ^h,F) \to H^i_c(U,\calf) \to H^i(K,F)
\end{equation}
qui est de plus exact si pour toute place $v \in U$, la fl\`eche 
$H^i(\calo_v ^h,\calf) \to H^i(K_v ^h,F)$ est injective, par exemple si 
$\calf$ est repr\'esent\'e par un sch\'ema en groupes fini \'etale, ou 
pour $i \leq 2$ si $\calf$ est repr\'esent\'e par un sch\'ema en groupes 
de type multiplicatif.

\smallskip 

f) Soit $\calf$ un groupe de type multiplicatif sur $U$ de dual 
$\calf'=\Hom(\calf,\G_m)$. 
On note $F$ et $F'$ les fibres g\'en\'eriques 
sur $\spec K$ de $F$ et $F'$.
Soit $v \in C^{(1)}$. 
Alors pour tous entiers $i,j \geq 0$,
on a un diagramme commutatif~: 

$$
\begin{CD}
H^i(U,\calf') && \times H^{j+1}_c(U,\calf) @>>> H^{i+j+1}_c(U,\G_m) \cr
@VVV @AAA @AAA \cr
H^i(K_v ^h,F') && \times  
H^j_c(K_v^h,F) @>>> H^{i+j}(K_v^h,\G_m)
\end{CD}
$$
et de m\^eme avec $\calf$ et $\calf'$ \'echang\'es.

\end{prop}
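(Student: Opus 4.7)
My plan is to deduce all six assertions from the functoriality of the localisation triangle
$(j_U)_!\calf \to Rj_{U*}\calf \to i_{C\setminus U,*}\,i_{C\setminus U}^* Rj_{U*}\calf$
on $C$, together with the single input of the extension-by-zero morphism $(j_V)_!(\calf|_V) \to (j_U)_!\calf$ attached to a pair of non-empty Zariski opens $V \subset U \subset C$.

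For (a), applying $R\Gamma(C,-)$ to this morphism yields the desired map $H^i_c(V,\calf) \to H^i_c(U,\calf)$, and transitivity in triples $W \subset V \subset U$ follows from composition. For (b), the restriction $H^i(U,\calf) \to H^i(V,\calf)$ arises from a natural map $Rj_{U*}\calf \to Rj_{V*}(\calf|_V)$ of sheaves on $C$ (identity on stalks at points of $V$, the restriction $H^i(\calo_v^h,\calf) \to H^i(K_v^h,F)$ at points of $U \setminus V$, and identity at points of $C \setminus U$); the induced map on the boundary term of the localisation triangle is exactly the projection $\bigoplus_{v \notin V} \to \bigoplus_{v \notin U}$ appearing in the diagram. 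Part (c) is the naturality of the connecting morphism with respect to $(j_V)_!(\calf|_V) \to (j_U)_!\calf$, and (d) follows from the commutative square of natural transformations coming from $(j_V)_! \to (j_U)_!$ combined with the comparison maps $(j)_! \to Rj_*$.

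The substantive content of the statement lies in (e). That the sequence~(\ref{complexf}) is a complex follows from (d) together with exactness of the localisation sequence for $U$ (which handles the $v \notin U$ components) and for $U \setminus \{v\}$ (which handles the $v \in U$ components, then pushed via (a)). For the exactness under the injectivity hypothesis, take $\alpha \in H^i_c(U,\calf)$ mapping to zero in $H^i(K,F) = \colim_{V \subset U} H^i(V,\calf)$, and choose $V \subset U$ open such that the image $\beta$ of $\alpha$ in $H^i(U,\calf)$ already vanishes in $H^i(V,\calf)$. Applying the localisation sequence on $C$ with support on $C \setminus V$ to the sheaf $(j_U)_!\calf$, together with excision $H^i_v(C,(j_U)_!\calf) \simeq H^i_v(\calo_v^h,\calf)$ and the vanishing of these groups at points $v \in C \setminus U$, gives an exact sequence
\[
\bigoplus_{v \in U \setminus V} H^i_v(\calo_v^h,\calf) \to H^i_c(U,\calf) \to H^i(V,\calf),
\]
so that $\alpha$ lifts to some $\gamma = (\gamma_v)_v$. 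Under the injectivity hypothesis, which holds by absolute purity in the finite étale case and by Proposition~\ref{injectprop} for groups of multiplicative type with $i \leq 2$, the local triangle
\[
H^{i-1}(K_v^h,F) \to H^i_v(\calo_v^h,\calf) \to H^i(\calo_v^h,\calf) \to H^i(K_v^h,F)
\]
has surjective first arrow, so each $\gamma_v$ lifts to some $f_v \in H^{i-1}(K_v^h,F)$, and there remains only to check that the image of $(f_v)$ under the map of~(\ref{complexf}) recovers $\alpha$.

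Part (f) is an instance of the functoriality of cup products in the derived category, applied to the pairing $(j_U)_!\calf' \otimes^{\L} (j_U)_!\calf \to (j_U)_!\G_m$ and the corresponding pairing on $\spec \calo_v^h$. The main technical point that I expect to require some care is, in the last sentence of (e), the identification between the residue map arising from the $(C, C\setminus V)$-localisation and the one arising from the $(C, \{v\})$-localisation of $(j_{U\setminus\{v\}})_!\calf$ followed by the morphism of (a); this compatibility is what transfers the lift $\gamma \in \bigoplus_{v \in U \setminus V} H^i_v(\calo_v^h,\calf)$ to the image of $(f_v) \in \bigoplus_v H^{i-1}(K_v^h, F)$ via~(\ref{complexf}), and is most cleanly established by an octahedral-type argument. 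Everything else in the statement is essentially automatic functoriality of the triangle $(j)_! \to Rj_* \to i_*i^*Rj_*$.
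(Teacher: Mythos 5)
Your overall strategy --- deriving everything from the functoriality of $(j_V)_!(\calf|_V)\to(j_U)_!\calf$, excision, and the naturality of the localisation triangle --- is essentially the paper's own (which for d) and e) defers to \cite{HaSz}, Prop.~3.1 and 4.2, and for c) and f) writes out exactly the excision diagrams you allude to, including the reduction of f) to the case $v\notin U$). For b) the paper's argument is more economical than yours: each component map $H^i(U,\calf)\to H^i(K_v^h,F)$ factors through $H^i(K,F)$, so no map $Rj_{U*}\calf\to Rj_{V*}(\calf|_V)$ is needed.

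There is, however, one concrete error in your treatment of e). For $v\in C\setminus U$ the group $H^i_v(C,(j_U)_!\calf)\simeq H^i_v(\calo_v^h,(j_v)_!F_v)$ does \emph{not} vanish: what vanishes is $H^i(\calo_v^h,(j_v)_!F_v)$, and it is precisely for this reason that $H^i_v(\calo_v^h,(j_v)_!F_v)\simeq H^{i-1}(K_v^h,F)$ (\cite{milneADT}, Prop.~II.1.1), a group which is in general nonzero. You have confused cohomology with support at $v$ with the cohomology of the henselian local scheme. The exact sequence you extract from the localisation triangle must therefore read
\[
\bigoplus_{v\in U\setminus V}H^i_v(\calo_v^h,\calf)\ \oplus\ \bigoplus_{v\in C\setminus U}H^{i-1}(K_v^h,F)\ \longrightarrow\ H^i_c(U,\calf)\ \longrightarrow\ H^i(V,\calf).
\]
This does not sink the argument --- the extra summands are exactly part of the source of the map in (\ref{complexf}) and contribute directly to the desired image --- but the step as written is false. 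A smaller point on f): the Artin--Verdier pairing couples $H^i(U,\calf')$ with $H^{j+1}_c(U,\calf)$, so it is not induced by a pairing $(j_U)_!\calf'\otimes^{\L}(j_U)_!\calf\to(j_U)_!\G_m$ of extensions by zero; as in the paper, one must realise it through the action of $\Hom_U(\calf,\G_m[i])$ on compactly supported cohomology, and the commutativity to check is the compatibility of that action with excision and the boundary map, not a plain functoriality of cup products.
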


Noter que dans ces \'enonc\'es, on pourra souvent remplacer les 
hens\'elis\'es $K_v^h$ par les compl\'et\'es $K_v$, comme on l'a 
expliqu\'e plus haut.

\begin{proof}
a) r\'esulte de la d\'efinition de $H^i_c$ et du fait que les immersions 
ouvertes $W \subset V \subset U$ induisent des morphismes compatibles
de faisceaux $(j_W)_! \calf \to (j_V)_! \calf \to (j_U)_! \calf$.

\smallskip

b) vient de ce que pour toute place $v$, la fl\`eche $H^i(U,\calf) \to 
H^i(K_v^h,F)$ est la compos\'ee des restrictions $H^i(U,\calf) \to 
H^i(K,F)$ et $H^i(K,F) \to H^i(K_v^h,F)$. 

\smallskip

Pour d\'emontrer c), il suffit (par lin\'earit\'e) de prouver que 
si $v \in C^{(1)}$ n'est pas dans $U$ (et donc pas non plus dans $V$),
le triangle
$$
\begin{diagram}
\node{} \node{H^{i+1}_c(V,\calf)}\arrow{s,r}{} \\
\node{H^i(K_v ^h,F)}\arrow{e}
\arrow{ne} \node{H^{i+1}_c(U,\calf)}
\end{diagram}
$$
est commutatif. Par (\cite{milneADT}, Prop.~1.1), on a 
$H^i(K_v^h,F)=H^{i+1}_v(\calo_v^h,(j_v)_{!} F_v)$, o\`u $F_v=F_{\mid K_v ^h}$ 
et $j_v : \spec K_v^h \to \spec \calo_v ^h$ est l'inclusion. 
Par ailleurs, la fl\`eche $H^i(K_v ^h,F) \to H^{i+1}_c(U,\calf)$ est induite
(cf. \cite{milneADT}, Lemme~2.4)
par la fl\`eche $H^{i+1}_v(C, (j_U)_! \calf) \to 
H^{i+1}(C, (j_U)_! \calf)=H^{i+1}_c(U,
\calf)$ (qui vient de la suite de localisation) et par l'identification de 
$H^{i+1}_v(C, (j_U)_! \calf)$ avec $H^{i+1}_v(\calo_v^h,(j_v)_{!} F_v)$ 
(excision, \cite{milneEC}, Cor. III.1.28).
La m\^eme chose est valable pour $V$, et on conclut alors la preuve de c) 
via la commutativit\'e du diagramme
$$
\begin{CD}
H^{i+1}_v(C, (j_V)_! \calf_{\mid V}) @>>> H^{i+1}(C, (j_V)_! \calf_{\mid V}) \cr
@VV{\simeq}V @VVV \cr
H^{i+1}_v(C, (j_U)_! \calf) @>>> H^{i+1}(C, (j_U)_! \calf). \cr
\end{CD}
$$

\smallskip

d) est d\'emontr\'e dans \cite{HaSz}, Prop.~3.1. (3). 

\smallskip 

La preuve de e) est identique \`a celle de la Prop.~4.2. de \cite{HaSz}, 
en rappelant que pour toute place $v \in C^{(1)}$, la fl\`eche 
$H^{i-1}(K_v^h,F) \to H^i_c(U,\calf)$ est d\'efinie en choisissant 
un ouvert non vide $V \subset U$ tel que $v \not \in V$, puis en 
composant la fl\`eche $H^{i-1}(K_v^h,F) \to H^i_c(V,\calf)$ 
(qui vient de la suite exacte (\ref{longlocal})) 
avec la fl\`eche $H^i_c(V,\calf) \to 
H^i_c(U,\calf)$ provenant de l'inclusion $V \subset U$; cette 
d\'efinition ne d\'epend pas de $V$ d'apr\`es c).

Il reste juste \`a v\'erifier que dans les deux cas particuliers
consid\'er\'es, la fl\`eche $H^i(\calo_v^h,\calf) \to H^i(K_v,F)$ est 
bien injective.  Le cas $i \leq 2$ avec $\calf$ de type 
multiplicatif r\'esulte de la 
Prop.~\ref{injectprop}. Dans le cas o\`u $\calf$ est fini \'etale avec 
$i$ quelconque,  
on a $H^i(\calo_v^h,\calf)=H^i(\kappa(v),F)$.
Soit $G_v$ le 
groupe de Galois de $K_v$, $I_v$ le groupe d'inertie et $G(v)$ 
le groupe de Galois du corps r\'esiduel 
$\kappa(v)$. L'injectivit\'e voulue r\'esulte alors
de ce que la suite exacte de groupes 
$$1 \to I_v \to G_v \to G(v) \to 1$$ 
est scind\'ee car $k$ est de caract\'eristique z\'ero.

\smallskip 

Pour d\'emontrer f), on note d'abord qu'il suffit de traiter le 
cas o\`u $v \not \in U$; en effet, si $v \in U$, on choisit un 
ouvert non vide $V \subset U$ tel que $v \not \in V$ et on utilise le 
diagramme (dont le rectangle sup\'erieur est commutatif)
$$
\begin{CD}
H^i(U,\calf') && \times H^{j+1}_c(U,\calf) @>>> H^{i+j+1}_c(U,\G_m) \cr
@VVV @AAA @AAA \cr
H^i(V,\calf') && \times H^{j+1}_c(V,\calf) @>>> H^{i+j+1}_c(V,\G_m) \cr
@VVV @AAA @AAA \cr
H^i(K_v ^h,F') && \times
H^j_c(K_v^h,F) @>>> H^{i+j}(K_v^h,\G_m)
\end{CD}
$$
pour se ramener (via c)) \`a v\'erifier la commutativit\'e du rectangle 
inf\'erieur, donc \`a montrer f) quand $v \not \in U$.
Comme les accouplements d'Artin-Verdier 
$$H^i(U,\calf') \times H^{j+1}_c(U,\calf) \to H^{i+j+1}_c(U,\G_m)$$
sont induits par les accouplements 
$$\ext^i_U(\calf,\G_m) \times H^{j+1}_c(U,\calf) \to H^{i+j+1}_c(U,\G_m)$$
et de m\^eme pour les accouplements locaux, il suffit de v\'erifier 
que le diagramme
$$
\begin{CD}
{\rm Hom}_U(\calf,\G_m[i]) && \times H^{j+1}_c(U,\calf) 
@>>>H^{i+j+1}_c(U,\G_m) \cr
@VVV @AAA @AAA \cr
{\rm Hom}_{K_v ^h}(F_v,\G_m[i]) && \times H^j(K_v^h,F_v) @>>>H^{i+j}(K_v^h,\G_m)
\end{CD}
$$ 
commute, les ${\rm Hom}$ \'etant pris dans la cat\'egorie d\'eriv\'ee. 
Soit alors $\alpha_U \in {\rm Hom}_U(\calf,\G_m[i])$, de restriction 
$\alpha_v \in {\rm Hom}_{K_v^h}(F_v,\G_m[i])$. Comme on l'a vu dans la 
preuve de c), on a $H^j(K_v^h,F)=H^{j+1}_v(\calo_v^h,(j_v)_! F_v)$. On conclut 
avec la commutativit\'e du diagramme 
$$
\begin{CD}
H^{j+1}_v(\calo_v^h,(j_v)_! F_v) @<<{\simeq}< H^{j+1}_v(C, (j_U)_! \calf) @>>> 
H^{j+1}(C,(j_U)_! \calf) \cr
@VV{(\alpha_v)_*}V @VV{(\alpha_U)_*}V @VV{(\alpha_U)_*}V \cr
H^{i+j+1}_v(\calo_v^h,(j_v)_! (\G_m)_v) @<<{\simeq}< H^{i+j+1}_v(C, 
(j_U)_! \G_m)
@>>> H^{i+j+1}(C,(j_U)_! \G_m)
\end{CD}
$$
et la d\'efinition de la fl\`eche 
$H^j(K_v^h,F) \to H^{j+1}_c(U,\calf)=H^{j+1}(C,(j_U)_! \calf)$, rappel\'ee
dans la preuve de c) (les isomorphismes indiqu\'es sur le diagramme 
sont obtenus via l'excision). La preuve pour $\calf$ et $\calf'$ \'echang\'es 
est identique.

\end{proof}

\begin{lem}\label{finitudeUfini}
Soit $k$ un corps \`a cohomologie galoisienne finie. Soient 
$C$ une $k$-courbe projective, lisse, g\'eom\'etriquement int\`egre, et 
$K=k(C)$.
Soit $U \subset C$ un ouvert non vide.
Pour tout $U$-sch\'ema en groupes ab\'eliens  $\calf$ fini \'etale sur $U$,
et tout entier $i \geq 0$, les groupes $H^{i}(U,\calf)$  et  $H^{i}_{c}(U,\calf)$
sont finis, et ils sont de plus nuls pour $i \geq 4$ si ${\rm cd}(k) \leq 1$.
\end{lem}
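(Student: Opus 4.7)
Le plan est d'assembler les finitudes fournies par deux suites spectrales de Hochschild-Serre, l'une globale sur $U$ et l'autre locale en les places $v \not\in U$.

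Pour les groupes $H^i(U,\calf)$, on utilisera la suite spectrale de Hochschild-Serre associée au revêtement $\overline U := U \times_k \overline k \to U$:
$$E_2^{p,q} = H^p(k, H^q(\overline U, \calf)) \Rightarrow H^{p+q}(U, \calf).$$
Les groupes $H^q(\overline U, \calf)$ sont finis d'après \cite{milneEC}, Th. VI.5.5, et nuls pour $q \geq 3$ puisque $\overline U$ est une courbe sur un corps algébriquement clos de caractéristique zéro. Comme $k$ est à cohomologie galoisienne finie par hypothèse, chaque terme $E_2^{p,q}$ sera fini, d'où la finitude de $H^i(U, \calf)$ pour tout $i$. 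Sous l'hypothèse supplémentaire $\cd(k) \leq 1$, on aura $E_2^{p,q} = 0$ dès que $p \geq 2$ ou $q \geq 3$, ce qui impliquera $H^i(U, \calf) = 0$ pour $i \geq 4$.

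Pour les groupes $H^i_c(U, \calf)$, on exploitera la suite exacte de localisation (\ref{longlocal}). L'ensemble $C \setminus U$ étant fini, il suffira d'établir la finitude de $H^i(K_v^h, F)$ en chaque place $v \not\in U$. À cet effet, on considérera la suite exacte scindée (possible en caractéristique zéro)
$$1 \to I_v \to G_v \to G(v) \to 1,$$
où $I_v \cong \widehat{\Z}(1)$ est le groupe d'inertie et $G(v) = \Gal(\overline{\kappa(v)}/\kappa(v))$. Le groupe $I_v$ étant de dimension cohomologique $1$, les groupes $H^q(I_v, F)$ seront finis pour $q \leq 1$ et nuls pour $q \geq 2$. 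Le corps $\kappa(v)$, extension finie de $k$, reste à cohomologie galoisienne finie (par le lemme de Shapiro appliqué au module induit). La suite spectrale de Hochschild-Serre correspondante donnera alors la finitude de tous les $H^i(K_v^h, F)$.

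On conclura la finitude de $H^i_c(U, \calf)$ en combinant la suite exacte (\ref{longlocal}) avec ces deux résultats. Pour la nullité sous l'hypothèse $\cd(k) \leq 1$, on aura $\cd(\kappa(v)) \leq 1$ donc $\cd(K_v^h) \leq 2$, d'où $H^i(K_v^h, F) = 0$ pour $i \geq 3$; combiné à l'annulation de $H^i(U, \calf)$ pour $i \geq 4$, la suite de localisation donnera $H^i_c(U, \calf) = 0$ pour $i \geq 4$. L'argument étant essentiellement une mise en œuvre standard des suites spectrales de Hochschild-Serre et des théorèmes de finitude de Milne, aucun obstacle technique sérieux n'est à prévoir; le seul point méritant attention sera la transmission de la finitude de la cohomologie galoisienne aux extensions finies de $k$.
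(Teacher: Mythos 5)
Votre d\'emonstration est correcte et suit essentiellement la m\^eme voie que celle de l'article~: suite spectrale de Hochschild-Serre et th\'eor\`emes de finitude de Milne pour les $H^i(U,\calf)$, puis la suite de localisation combin\'ee \`a la finitude de la cohomologie galoisienne des $K_v^h$ et \`a la majoration $\cd(K_v^h)\leq 2$ pour les $H^i_c(U,\calf)$. Vous ne faites qu'expliciter, via la d\'ecomposition inertie/corps r\'esiduel, le point que l'article se contente d'affirmer, \`a savoir que les hens\'elis\'es $K_v^h$ sont \`a cohomologie galoisienne finie.
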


\begin{proof}  Pour tout $\Z/n$-faisceau localement constant
 $\calf$ sur toute $k$-vari\'et\'e $U$, les groupes $H^{i}(U,\calf)$ sont finis,
 et nuls pour $i $ strictement plus grand que ${\rm cd}(k) + 2 {\rm dim}(U)$
(ceci r\'esulte de la suite spectrale de Hochschild-Serre et de 
\cite{milneEC}, Th. VI.1.1. et 
Th. VI.5.5. qui valent pour $k$ alg\'ebriquement clos).
L'\'enonc\'e pour $H^i_c$ s'en d\'eduit via la suite (\ref{longlocal}) et 
le fait que les hens\'elis\'es $K_v^h$ sont \`a cohomologie galoisienne
finie, et de plus de dimension cohomologique $\leq 2$ si 
${\rm cd}(k) \leq 1$. 
\end{proof}

\begin{prop} \label{finitorsion}

Soit $k$ un corps \`a cohomologie galoisienne finie. Soient
$C$ une $k$-courbe projective, lisse, g\'eom\'etriquement int\`egre, et
$K=k(C)$.
Soit $U\subset C $ un ouvert non vide et $\T$ un tore sur $U$.

(i) Pour $i \geq 2$, les groupes 
$H^i(U,\T)$ et $H^i_c(U,\T)$ sont de torsion et de type cofini.

(ii)  Pour $i \geq 2$, les groupes 
 $H^i(U,\widehat{\T})$ et  $H^i_{c}(U,\widehat{\T})$ sont de torsion et de type cofini.

(iii)  Le groupe $H^1(U,\widehat{\T})$ est fini et le groupe 
$H^0(U,\widehat{\T})$ est de type fini.

(iv) Les groupes $H^0(U,\T)_{\tors}$ et 
$H^1(U,\T)_{\tors}$ sont de type cofini.

(v) Pour $i \geq 0$, les groupes $H^i(U,\calt)/n$ et $H^i(U,\widehat \calt)/n$
sont finis. 

\end{prop}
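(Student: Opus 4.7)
Les cinq assertions reposent sur deux outils : les suites de Kummer
$0 \to {}_n\calt \to \calt \to \calt \to 0$ et
$0 \to \widehat{\calt} \to \widehat{\calt} \to \widehat{\calt}/n \to 0$,
et la suite spectrale de Leray $E_2^{pq} = H^p(k, H^q(\overline U, \calf)) \Rightarrow H^{p+q}(U,\calf)$ pour $\calf \in \{\calt, \widehat{\calt}\}$, le tout coupl\'e au lemme~\ref{finitudeUfini} sur les coefficients finis.

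Pour (iii), on choisit un rev\^etement \'etale fini galoisien $V \to U$, de groupe $G$, trivialisant $\calt$, de sorte que $\widehat{\calt}_{|V} \simeq \Z^r$ soit constant. Comme $\pi_1^{\et}(V)$ est profini et $\Z$ discret, on a $H^1(V, \Z) = 0$ ; par Hochschild-Serre il vient $H^0(U, \widehat{\calt}) \hookrightarrow H^0(V, \widehat{\calt}) \simeq \Z^N$ (de type fini) et $H^1(U, \widehat{\calt}) \hookrightarrow H^1(G, \Z^N)$ (fini). Les suites de Kummer donnent ensuite les plongements $H^i(U, \calt)/n \hookrightarrow H^{i+1}(U, {}_n\calt)$ et $H^i(U, \widehat{\calt})/n \hookrightarrow H^i(U, \widehat{\calt}/n)$, ainsi que des majorations analogues de $H^i(U, \calt)[n]$ par $H^i(U, {}_n\calt)$ et de $H^i(U, \widehat{\calt})[n]$ par $H^{i-1}(U, \widehat{\calt}/n)$. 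Tous ces seconds membres \'etant finis d'apr\`es le lemme~\ref{finitudeUfini}, ceci \'etablit (v) et la finitude des $n$-torsions intervenant dans (i), (ii), (iv).

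Pour (i) et (ii), il reste \`a v\'erifier que $H^i(U, \calf)$ est de torsion pour $i \geq 2$. Tout groupe $H^p(k, A)$ avec $p \geq 1$ \'etant de torsion, la suite spectrale de Leray ram\`ene le probl\`eme \`a la torsion de $H^q(\overline U, \calf)$ pour $q \geq 1$ (voire $q \geq 2$ si $\calf = \calt$). Trivialisons $\calf$ par un rev\^etement \'etale $\overline V \to \overline U$. Pour $\calt$, on invoque $H^q(\overline V, \G_m) = 0$ pour $q \geq 2$ (th\'eor\`eme de Tsen, $\overline V$ \'etant lisse sur un corps alg\'ebriquement clos). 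Pour $\widehat{\calt}$, on se ram\`ene au cas de $H^q(\overline V, \Z)$ : la nullit\'e de $H^1(\overline V, \Z)$ provient de ce que $\pi_1^{\et}(\overline V)$ est profini, et la torsion de $H^q(\overline V, \Z)$ pour $q \geq 2$ s'obtient via la suite exacte $0 \to \Z \to \Q \to \Q/\Z \to 0$ jointe \`a l'annulation de $H^q(\overline V, \Q) \simeq H^q(\overline V, \Z) \otimes \Q$ en degr\'e positif, qu'on \'etablit par comparaison avec la cohomologie de Betti d'une courbe lisse complexe (ou directement par Mayer-Vietoris). Le cotype fini en r\'esulte alors en combinant avec les finitudes des $n$-torsions d\'ej\`a obtenues.

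Pour les versions \`a support compact, on utilise la suite exacte longue (\ref{longlocal}),
$\bigoplus_{v \notin U} H^{i-1}(K_v^h, F) \to H^i_c(U, \calf) \to H^i(U, \calf) \to \bigoplus_{v \notin U} H^i(K_v^h, F)$,
somme finie ; pour $i \geq 2$, les termes locaux sont de torsion (cohomologie galoisienne en degr\'e $\geq 1$) et leurs $n$-torsions sont finies par d\'evissage via l'inertie (les corps r\'esiduels $\kappa(v)$ \'etant \`a cohomologie galoisienne finie), d'o\`u la torsion et le cotype fini de $H^i_c(U, \calf)$. L'obstacle principal sera la derni\`ere \'etape de (ii), \`a savoir le contr\^ole de $H^q(\overline V, \Z)$ : contrairement \`a $\G_m$ ou aux coefficients finis, la cohomologie \'etale \`a coefficients $\Z$ n'est pas directement accessible et requiert un argument fin de comparaison.
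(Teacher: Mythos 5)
Votre d\'evissage global est pour l'essentiel celui du texte : suites de Kummer pour ramener la finitude de $H^i(U,\calt)/n$, $H^i(U,\widehat\calt)/n$ et des sous-groupes de $n$-torsion aux groupes finis $H^j(U,{}_n\calt)$ et $H^j(U,\widehat\calt/n)$ (lemme~\ref{finitudeUfini}), rev\^etement trivialisant et Hochschild--Serre pour (iii) (le texte utilise une restriction-corestriction \'equivalente), et suite de localisation (\ref{longlocal}) pour les versions \`a support compact. La seule divergence r\'eelle est le traitement de la torsion de $H^i(U,\calf)$ pour $i\geq 2$ : vous passez par $\overline U$ et un rev\^etement trivialisant $\overline V$ pour invoquer Tsen et la cohomologie de $\Z$ sur $\overline V$, l\`a o\`u le texte cite directement que $H^i(X,\G_m)$ est de torsion pour $i\geq 2$ sur tout sch\'ema r\'egulier (\cite[II, Prop. 1.4]{GrBr}) et que $H^i(X,\Z)$ est de torsion pour $i\geq 1$ sur tout sch\'ema normal int\`egre (\cite{milneADT}, Lemme II.2.10), appliqu\'es \`a $U$ lui-m\^eme. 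Les deux routes aboutissent, mais la seconde \'evite pr\'ecis\'ement le point que vous signalez vous-m\^eme comme d\'elicat.

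Ce point d\'elicat est d'ailleurs le seul endroit o\`u votre argument ne tient pas tel quel : vous justifiez l'annulation de $H^q(\overline V,\Q)$ en degr\'e positif \emph{par comparaison avec la cohomologie de Betti}, et via un isomorphisme $H^q_{\et}(\overline V,\Q)\simeq H^q_{\et}(\overline V,\Z)\otimes\Q$. Le th\'eor\`eme de comparaison \'etale--Betti ne vaut que pour des coefficients finis (ou de torsion) ; pour $\Q$ il est faux : $H^1_{\rm Betti}(\overline V(\C),\Q)\simeq \Q^{2g+\delta}$ est non nul en g\'en\'eral alors que $H^1_{\et}(\overline V,\Q)=0$ (le $\pi_1$ \'etale est profini), et pour $\overline V$ propre on a $H^2_{\rm Betti}=\Q$ alors que $H^2_{\et}(\overline V,\Q)=0$. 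De m\^eme l'isomorphisme $H^q_{\et}(X,\Q)\simeq H^q_{\et}(X,\Z)\otimes\Q$ n'est pas automatique sans hypoth\`ese de finitude (et l'utiliser ici serait circulaire, puisque la torsion de $H^q(X,\Z)$ est pr\'ecis\'ement ce qu'on cherche). La bonne justification est soit de citer \cite{milneADT}, Lemme II.2.10, soit d'utiliser que pour $X$ normal int\`egre on a $H^q_{\et}(X,\Q)=H^q_{\Zar}(X,\Q)=0$ pour $q\geq 1$ (un faisceau constant sur un espace irr\'eductible est flasque pour la topologie de Zariski). Avec cette correction, le reste de votre preuve est correct.
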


\begin{proof}
Pour tout sch\'ema r\'egulier $X$, le groupes $H^i(X,\G_m)$
sont de torsion pour $iÊ\geq 2$ (\cite[II, Prop. 1.4]{GrBr}.
 Par suite spectrale, on conclut qu'il en est de m\^eme
pour $H^i(X,\T)$  pour tout $X$-tore isotrivial.
Comme pour tout $n >0$ le groupe $H^i(U,{}_n \T)$ 
est fini (Lemme \ref{finitudeUfini}) et
se surjecte sur 
${}_n H^i(U,\T)$ via la suite de Kummer, on obtient aussi que 
$H^i(U,\T)$ est de type cofini pour $i \geq 2$. De m\^eme 
$H^i(U,\calt)/n$ s'injecte dans $H^{i+1}(U, _n \calt)$, donc il est 
aussi fini.

Pour $n>0$ le groupe $_n H^1(U,\T)$ est un quotient du groupe fini 
$H^1(U,_n \T)$, ce qui montre que $H^1(U,\T)_{\tors}$ est de type cofini
(mais $H^1(U,\T)$ n'est pas forc\'ement de torsion). Le groupe 
$H^0(U,\T)_{\tors}$ est \'egalement de type cofini car 
on a $_n H^0(U,\T)=H^0(U, _n T)$. 

\smallskip

Pour tout sch\'ema normal int\`egre $X$, et  $i\geq 1$,
 les groupes $H^{i}(X,\Z)$ sont de torsion
 \cite{milneADT}, Lemme II.2.10 et le groupe $H^1(X,\Z)$ est nul par
\cite{sga4}, IX.3.6. (ii).
 Comme le groupe $H^{i-1}(U,\widehat \T/n)$ 
 est fini (Lemme \ref{finitudeUfini}) et
 se surjecte sur $_n H^i(U,\widehat \T)$, 
on voit que $H^i(U,\widehat \T)$ est encore de type cofini pour tout 
$i \geq 1$. De plus, comme $H^1(U,\Z)=0$ pour tout $U$ r\'egulier, 
il existe (par 
restriction-corestriction) un $n >0$ fix\'e tel que $H^1(U,\widehat \T)=
{}_{n}H^1(U,\widehat \T)$, et ce groupe est fini puisque c'est un quotient 
de $H^0(U,\widehat \T/n)$. Le groupe $H^i(U,\widehat \calt)/n$ est \'egalement 
fini car il s'injecte dans $H^i(U, \widehat \calt /n)$, et de m\^eme 
$H^0(U,\widehat \calt)=H^0(K,\widehat T)$ est de type fini.
\smallskip 

De la suite exacte
$$\bigoplus_{v \notin U} H^{i-1}(K_v,T) \to H^i_c(U,\T) \to H^i(U,\T)$$
on d\'eduit que pour
  $i \geq 2$, le groupe $H^i_c(U,\T)$ est de torsion.
 
 De la suite exacte
 $$\bigoplus_{v \notin U} H^{i-1}(K_v,\widehat \T) \to H^i_c(U,\widehat \T) \to H^i(U,\widehat \T)$$
on d\'eduit que pour
  $i \geq 2$, le groupe 
 $H^i_c(U,,\widehat \T)$ est de torsion 
et de type cofini.
 \end{proof}

\begin{prop} \label{finitudeShai}
Soit $C$ une $\C((t))$-courbe projective, lisse, g\'eom\'e\-tri\-quement int\`egre.
 Soit $K=\C((t))(C)$ et $K_{v}$ le compl\'et\'e de $ K$ en un point de codimension 1 de $C$.

(i) Pour tout $K$-groupe ab\'elien fini $G$, et tout $i\geq 0$,  les groupes
$$\cyr{X}^{i}(K,G) := \Ker [ H^{i}(K,G) \to \prod_{v \in C^{(1)}} H^{i}(K_{v},G)]$$
sont finis. Le groupe $\Sha^1_{\omega}(K,G)$ est \'egalement fini.

(ii) Pour tout $K$-groupe de type multiplicatif $G$,  le groupe
$$\cyr{X}^1(K,G) := \Ker [ H^1(K,G) \to \prod_{v \in C^{(1)}} H^1(K_{v},G)]$$
est fini.
\end{prop}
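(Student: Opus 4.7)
Le plan pour la partie (i) est de ramener la finitude de $\Sha^i(K,G)$ \`a celle d'un groupe fini de cohomologie \'etale \`a support compact, via un argument d'\'epandage. On \'etend $G$ en un $U_0$-sch\'ema en groupes finis \'etales $\calf$ sur un ouvert non vide $U_0 \subset C$. Les cas $i=0$ et $i \geq 3$ sont imm\'ediats~: $\Sha^0(K,G)=0$ car $G(K) \hookrightarrow G(K_v)$ pour tout $v$, et $H^i(K,G)=0$ pour $i \geq 3$ puisque $\cd K \leq 2$. Pour $i=1,2$, on introduit le sous-groupe
$$\sD^i(U_0,\calf) := {\rm Im}\bigl[H^i_c(U_0,\calf) \to H^i(K,G)\bigr]$$
de $H^i(K,G)$, quotient du groupe fini $H^i_c(U_0,\calf)$ (Lemme \ref{finitudeUfini}). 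L'affirmation cl\'e est $\Sha^i(K,G) \subset \sD^i(U_0,\calf)$, ce qui conclut. \'Etant donn\'e $\xi \in \Sha^i(K,G)$, on utilise $H^i(K,G) = \varinjlim_{V \subset U_0} H^i(V,\calf)$ pour relever $\xi$ en $\tilde\xi \in H^i(V,\calf)$ pour un $V \subset U_0$ convenable. Comme le hens\'elis\'e $K_v^h$ et le compl\'et\'e $K_v$ ont des groupes de Galois absolus canoniquement isomorphes, $H^i(K_v^h,G) \cong H^i(K_v,G)$, donc l'image de $\tilde\xi$ dans $H^i(K_v^h,G)$ s'annule pour tout $v \notin V$. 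Par l'exactitude de (\ref{longlocal}), $\tilde\xi$ se rel\`eve \`a $H^i_c(V,\calf)$, d'o\`u $\xi \in \sD^i(V,\calf) \subset \sD^i(U_0,\calf)$ par la Proposition \ref{compatcoh}(d).

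Pour la finitude de $\Sha^1_\omega(K,G)$, on se ram\`ene au cas d'un module galoisien trivial via inflation-restriction. On choisit une extension galoisienne finie $L/K$ d\'eployant $G$~; la suite d'inflation-restriction pr\'esente $\Sha^1_\omega(K,G)$ comme extension d'un sous-groupe de $\Sha^1_\omega(L,G)^{\Gal(L/K)}$ par le groupe fini $H^1(\Gal(L/K),G)$, la restriction envoyant bien $\Sha^1_\omega(K,G)$ dans $\Sha^1_\omega(L,G)$ car chaque place de $K$ n'a qu'un nombre fini de places de $L$ au-dessus. Le corps $L$ est encore de la forme $\C((t'))(C')$, et $G$ y est un $G_L$-module trivial, somme directe de copies de $\Z/n_i$. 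La Proposition \ref{sha1sha1omega}(ii) appliqu\'ee sur $L$ donne alors $\Sha^1_\omega(L,G) = \Sha^1(L,G)$, fini par la partie (i) appliqu\'ee au corps $L$.

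Pour la partie (ii), on \'ecrit $G$ comme extension $1 \to T \to G \to F \to 1$ avec $T$ un tore et $F$ fini de type multiplicatif. Par Hilbert 90 appliqu\'e \`a une extension galoisienne finie $L/K$ d\'eployant $T$, le groupe $H^1(K,T)$ est annul\'e par $n=[L:K]$. \'Epandant $T$ en un tore $\calt$ sur un ouvert non vide $U_0 \subset C$, la Proposition \ref{injectprop} donne une injection $H^1(U_0,\calt) \hookrightarrow H^1(K,T)$, et la Proposition \ref{extension}(ii) combin\'ee avec l'argument d'\'epandage de (i) montre que $\Sha^1(K,T) \hookrightarrow {}_n H^1(U_0,\calt)$~; ce dernier groupe est, via la suite de Kummer pour le tore $\calt$, un quotient du groupe fini $H^1(U_0,{}_n\calt)$ (Lemme \ref{finitudeUfini}). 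Donc $\Sha^1(K,T)$ est fini. Pour le cas g\'en\'eral, on utilise le complexe $\Sha^1(K,T) \to \Sha^1(K,G) \to \Sha^1(K,F)$~: l'image dans $\Sha^1(K,F)$ est contenue dans le groupe fini $\Sha^1(K,F)$ (par (i)), et un \'el\'ement du noyau se rel\`eve en une classe de $H^1(K,T)$ dont les images locales en chaque $v$ appartiennent au sous-groupe fini ${\rm Im}[F(K_v) \to H^1(K_v,T)]$~; apr\`es r\'etr\'ecissement de $U_0$ pour la bonne r\'eduction de $F$, ce relev\'e est non ramifi\'e sur $U_0$ et se situe donc dans ${}_n H^1(U_0,\calt)$ modulo ${\rm Im}[F(K) \to H^1(K,T)]$, qui est fini. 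Le principal obstacle est cette derni\`ere chasse au diagramme, la suite des $\Sha$ n'\'etant qu'un complexe et non exacte~; il faut suivre soigneusement les relev\'es locaux pour contr\^oler le noyau.
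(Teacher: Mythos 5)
Pour la partie (i) et pour la finitude de $\Sha^1_\omega(K,G)$, votre argument est correct et suit essentiellement la d\'emonstration du texte~: \'epandage sur un ouvert $U_0$, finitude de la cohomologie d'un faisceau fini sur $U_0$ (Lemme \ref{finitudeUfini}), et r\'eduction au cas d\'eploy\'e par inflation-restriction. La seule variante est que vous passez par $H^i_c(U_0,\calf)$ et la suite (\ref{longlocal}) l\`a o\`u le texte invoque directement la proposition \ref{extension} pour montrer que $\Sha^i(K,G)$ est contenu dans l'image du groupe fini $H^i(U_0,\calf)$~; les deux arguments sont \'equivalents.

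En revanche, votre traitement de (ii) contient un pas erron\'e et un pas inachev\'e. D'abord, la proposition \ref{injectprop} ne s'applique qu'\`a un anneau \emph{semilocal} r\'egulier~: pour un ouvert $U_0$ de la courbe $C$, la fl\`eche $H^1(U_0,\calt)\to H^1(K,T)$ n'est pas injective en g\'en\'eral (d\'ej\`a pour $\calt=\G_m$ on a $H^1(U_0,\G_m)=\Pic U_0$, qui peut \^etre non nul, alors que $H^1(K,\G_m)=0$)~; l'\'enonc\'e interm\'ediaire $\Sha^1(K,T)\hookrightarrow {}_nH^1(U_0,\calt)$ est donc mal fond\'e. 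La finitude de $\Sha^1(K,T)$ se r\'epare n\'eanmoins~: ce groupe est contenu dans l'image de $H^1(U_0,\calt)\to H^1(K,T)$, laquelle se factorise par $H^1(U_0,\calt)/n$ puisque le but est annul\'e par $n$, et $H^1(U_0,\calt)/n$ est fini car il s'injecte dans $H^2(U_0,{}_n\calt)$. Ensuite, la chasse au diagramme finale pour un $G$ de type multiplicatif g\'en\'eral, que vous signalez vous-m\^eme comme le point d\'elicat, reste \`a l'\'etat d'esquisse~: il faudrait notamment justifier qu'une classe de $H^1(K,T)$ qui est en chaque $v\in U_0$ dans l'image de $F(K_v)$ provient de $H^1(U_0,\calt)$, ce qui est un \'enonc\'e de puret\'e demandant la suite de localisation pour les tores et non la proposition \ref{extension} (ii) telle quelle. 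Le texte \'evite toutes ces difficult\'es en faisant le d\'evissage $1\to\calt\to{\mathcal G}\to\calf\to 1$ sur l'ouvert $U$ lui-m\^eme~: comme $\Sha^1(K,G)$ est contenu dans l'image de $H^1(U,{\mathcal G})$, laquelle se factorise par $H^1(U,{\mathcal G})/n$ o\`u $n$ est l'exposant de $H^1(K,G)$, il suffit de constater que ce quotient est fini, coinc\'e entre un quotient de $H^1(U,\calt)/m\hookrightarrow H^2(U,{}_m\calt)$ et un sous-groupe de $H^1(U,\calf)$ --- aucune chasse sur les groupes $\Sha$ n'est n\'ecessaire.
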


\begin{proof} Soit $U \subset C$ un ouvert sur lequel le groupe $G$ s'\'etend en un
 groupe de type multiplicatif ${\mathcal G}$. 
D'apr\`es la proposition~\ref{extension},
  les groupes consid\'er\'es sont dans l'image
 de  $H^{i}(U,{\mathcal G}) \to H^{i}(K,G)$.
 Compte tenu du lemme \ref{finitudeUfini},
 ceci \'etablit le premier \'enonc\'e de (i) . Si maintenant l'action 
de Galois sur $G(\ov K)$ est triviale, 
la proposition~\ref{sha1sha1omega} donne que
$\Sha^1_{\omega}(K,G)=\Sha^1(K,G)$ est fini. Dans le cas g\'en\'eral, 
soit $L$ une extension finie galoisienne de groupe $\Gamma$ 
de $K$ qui d\'eploie $G$,
alors la suite exacte de restriction-inflation induit une suite 
exacte 
$$0 \to N \to  \Sha^1_{\omega}(K,G) \to \Sha^1_{\omega}(L,G),$$
o\`u $N$ est un sous-groupe du groupe fini $H^1(\Gamma,G)$, d'o\`u la finitude 
de $\Sha^1_{\omega}(K,G)$.

\smallskip
 
 Montrons (ii). Le groupe  $H^1(K,G)$
 est d'exposant fini, disons $n>0$. L'image de $H^1(U,{\mathcal G}) 
\to H^1(K,G)$
 se factorise donc par $H^1(U,{\mathcal G})/n$. 
On a une suite exacte de $U$-groupes de type multiplicatif
 $$ 1 \to \T \to {\mathcal G} \to \calf \to 1,$$
 o\`u $\T$ est un $U$-tore et $\calf$ un $U$-groupe ab\'elien fini.
 Soit $m$ un multiple de $n$ qui annule $F$. On a alors une suite exacte
   $$ H^1(U,\T)/m \to H^1(U,{\mathcal G})/n \to H^1(U,\calf)$$
  et une injection
  $$  H^1(U,\T)/m \hookrightarrow H^2(U,_m \T).$$
  Comme $H^1(U,\calf)$ et $ H^2(U,_m \T)$ sont 
finis (Lemme \ref{finitudeUfini}), ceci \'etablit (ii).
 \end{proof}
  
 \begin{rema}
 \`A la diff\'erence 
du  cas classique, celui des corps de nombres, et du cas  des corps de fonctions d'une variable sur un 
corps $p$-adique  \cite{HaSz}), sur $K=\C((t))(C)$,
 le groupe $\cyr{X}^2(K,\G_{m})$ n'est pas forc\'ement fini.
 Si la courbe $C$ admet un mod\`ele projectif et lisse sur $\C[[t]]$
 de fibre sp\'eciale $C_{0}$ de genre $g$, alors  $\cyr{X}^2(K,\G_{m}) \simeq H^1(C_{0},\Q/\Z) \simeq (\Q/\Z)^{2g}$
(voir la proposition \ref{Sha2Kmun}).
\end{rema}

\section{Dualit\'e \`a la Artin-Verdier sur un ouvert}

\begin{prop} \label{finigroup}
Pour un $U$-sch\'ema en groupes ab\'eliens finis \'etale   $\calf$, 
de dual de Cartier $\calf^0=Hom(\calf,\Q/\Z(1))$,
on a, pour $0 \leq i \leq 3$, un accouplement parfait de groupes finis
$$H^{i}(U,\calf) \times H^{3-i}_{c}(U,\calf^0) \to H^3_{c}(U,\Q/\Z(1)) = \Q/\Z(-1).$$
\end{prop}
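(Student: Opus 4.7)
The plan is to follow the template of Artin-Verdier duality for an arithmetic curve, adapted to the fact that $k=\C((t))$ has $\cd k=1$. The argument parallels \cite{HaSz}, \S 4, but with the total degree shifted by one: in \cite{HaSz} the base field is $p$-adic (of cohomological dimension $2$) and the target is $H^4_c(U,\G_m)$, whereas here $\cd k=1$ gives total degree $3$ and target $H^3_c(U,\Q/\Z(1))$.

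\emph{Construction of the pairing.} Since $\calf$ is killed by some integer $n$, one has $\calf^0=\Hom(\calf,\mu_n)$ canonically. The evaluation $\calf\otimes\calf^0\to\mu_n$ composed with $\mu_n\hookrightarrow\Q/\Z(1)$ produces the cup product
\begin{equation*}
H^i(U,\calf)\times H^{3-i}_c(U,\calf^0)\to H^3_c(U,\mu_n)\hookrightarrow H^3_c(U,\Q/\Z(1))=\Q/\Z(-1),
\end{equation*}
where the last identification is Proposition~\ref{cohocourbes}(iii). Both factor groups are already finite by Lemma~\ref{finitudeUfini}, so only non-degeneracy need be checked.

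\emph{D\'evissage.} Applying the five-lemma to short exact sequences of finite \'etale sheaves on $U$, one reduces to the case where $\calf$ is locally constant with stalk $\Z/n$. Then, passing to a connected finite \'etale cover $\pi:V\to U$ trivializing $\calf$, Shapiro's lemma for both ordinary and compactly supported cohomology (valid since $\pi_!=\pi_*$ for $\pi$ finite \'etale) allows one to assume $\calf=\Z/n$ constant on $U$.

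\emph{Main step.} For $\calf=\Z/n$, let $\bar U=U\times_k\bar k$ and $\mathfrak g=\Gal(\bar k/k)\simeq\hat\Z(1)$. Since $\cd k=1$, the two Hochschild-Serre spectral sequences
\begin{equation*}
H^p(k,H^q(\bar U,\Z/n))\Rightarrow H^{p+q}(U,\Z/n),\qquad H^p(k,H^q_c(\bar U,\mu_n))\Rightarrow H^{p+q}_c(U,\mu_n)
\end{equation*}
collapse to four-term short exact sequences. I would then combine two inputs: (a) Poincar\'e-Verdier duality on the curve $\bar U$ over the algebraically closed field $\bar k$ of characteristic zero, giving a perfect pairing of finite groups $H^q(\bar U,\Z/n)\times H^{2-q}_c(\bar U,\mu_n)\to H^2_c(\bar U,\mu_n)=\Z/n$; and (b) the duality for $k=\C((t))$, namely a perfect pairing $H^p(k,M)\times H^{1-p}(k,M^0)\to\Q/\Z(-1)$ for any finite $\mathfrak g$-module $M$, which follows elementarily from $\mathfrak g\simeq\hat\Z(1)$ and Pontryagin-type duality between invariants and coinvariants of a pro-cyclic group. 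Suitable compatibility of cup products with the two spectral sequences then allows the five-lemma to conclude the non-degeneracy of the pairing on $U$.

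\emph{Principal difficulty.} The main obstacle is the compatibility of the three cup-product pairings -- on $\bar U$, on $\Spec k$, and on $U$ -- with the edge maps of the Hochschild-Serre spectral sequences, so that the global pairing really factors through the induced filtrations. This is a standard but lengthy diagram chase, entirely analogous to the one in \cite{HaSz}. A related technical point is the careful bookkeeping of Tate twists: the output of Poincar\'e duality on $\bar U$ is $\Z/n$ (untwisted), while the duality at $k$ contributes the $(-1)$-twist, for a total target $\Z/n(-1)\subset\Q/\Z(-1)$, in agreement with the computation of $H^3_c(U,\mu_n)$ in Proposition~\ref{cohocourbes}.
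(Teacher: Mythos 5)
Votre démonstration est correcte et suit essentiellement la même voie que celle du texte, qui renvoie à \cite{milneEC}, Cor.~V.2.3 : combiner la dualité de Poincaré--Verdier sur $\overline{U}$ au-dessus du corps algébriquement clos $\overline{k}$ avec la dualité galoisienne pour $\mathfrak{g}=\Gal(\overline{k}/k)\simeq\widehat{\Z}(1)$ via la suite spectrale de Hochschild--Serre, en contrôlant la compatibilité des cup-produits. Les étapes supplémentaires de dévissage et de Shapiro que vous proposez sont inoffensives mais non nécessaires, la dualité de Poincaré sur $\overline{U}$ valant déjà pour tout faisceau localement constant constructible.
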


\begin{proof}
L'argument est ici
exactement le m\^eme que dans \cite{milneEC}, Corollaire V.2.3.
Il s'agit de combiner la dualit\'e de Poincar\'e pour une vari\'et\'e projective et lisse sur un corps alg\'ebriquement clos
avec la dualit\'e en cohomologie galoisienne sur le corps $\C((t))$. Pour 
un argument d\'etaill\'e dans un cadre g\'en\'eral, voir \cite{diego}.
\end{proof}

Pour conserver la sym\'etrie, il est commode ici d'introduire 
les $1$-motifs $\M$ sur $U$ de la forme $\M=\T=[0 \to \calt]$,
dont le dual est le $1$-motif $\M^*=\widehat \calt [1]=[\widehat \calt 
\to 0]$ (par convention, dans ces complexes \`a deux termes, le terme 
de gauche est plac\'e en degr\'e $-1$ et celui de droite en degr\'e $0$).
On a alors 
une fl\`eche (dans la cat\'egorie d\'eriv\'ee born\'ee 
des faisceaux fppf sur $U$)~:
\begin{equation} \label{tensderiv}
\M \otimes^{\L} \M^* \to \G_m[1]
\end{equation}
qui induit pour $0 \leq i \leq 2$ des accouplements 
$$H^i(U,\M) \times H^{2-i}_c(U,\M^*) \to H^3_{c}(U,\G_m)=\Q/\Z(-1).$$
(Noter qu'on peut ici utiliser indiff\'eremment la cohomologie fppf 
ou \'etale, les sch\'emas en groupes qui interviennent \'etant suppos\'es 
lisses).
On dispose aussi pour tout $n >0$ de la r\'ealisation $n$-adique 
$T_{\Z/n} \M$ de $\M$, qui est un sch\'ema en groupes fini \'etale sur 
$U$, et d'un triangle exact associ\'e
\begin{equation} \label{kummertri}
T_{\Z/n} \M \to \M \to \M \to T_{\Z/n} \M[1].
\end{equation}
Le dual de Cartier de $T_{\Z/n} \M$ est $(T_{\Z/n} \M)^0=T_{\Z/n} \M^*$. 
Si $M=\calt$,
on a simplement $T_{\Z/n} \M=_n \calt$, tandis que si $M= \widehat \T [1]$,
on a $T_{\Z/n} \M=\widehat \T/n$.
La fl\`eche 
$$T_{\Z/n} \M \otimes T_{\Z/n} \M^* \to \mu_n$$ associ\'ee 
\`a cette dualit\'e est compatible (via le lemme~\ref{compatkummer}) 
avec (\ref{tensderiv}), i.e. le
diagramme 
\begin{equation} \label{compatdiag}
\begin{CD}
\M \otimes \M^* [-1] @>>> \G_m \cr
@VVV @VVV \cr 
(T_{\Z/n} \M) [1] \otimes T_{\Z/n} \M^* @>>> \mu_n[1]
\end{CD}
\end{equation}

est commutatif. 

\begin{theo}

Soit $\M$ un $1$-motif sur $U$ du type $\M=\T$ ou $\M=\widehat \T[1]$,
o\`u $\T$ est un tore. Soit $\M^*$ le dual de $\M$. Soit $l$ un nombre 
premier. Alors pour $0 \leq i \leq 2$, l'accouplement 
$$H^{i}(U,\M) \times H^{2-i}_{c}(U, \M^* ) \to \Q/\Z(-1)$$
induit des accouplements parfaits de groupes finis
$$ H^{i}(U,\M)\{l\}^{(l)} \times H^{2-i}_{c}(U,\M^* )^{(l)} \{l\} \to \Q_{l}/\Z_{l}(-1)$$ et 
$$ H^{i}_c(U,\M)\{l\}^{(l)} \times H^{2-i}(U,\M^* )^{(l)} \{l\} \to \Q_{l}/\Z_{l}(-1)$$
\end{theo}

\begin{proof}
C'est essentiellement le m\^eme argument que dans \cite{HaSz}
(Th. 1.3) ou \cite{dhsza1} (Th. 3.4). Le triangle exact (\ref{kummertri})
induit pour tout $n >0$ un diagramme commutatif (cf. (\ref{compatdiag})) 
de groupes finis (rappelons que $T_{\Z/n} \M$ est un sch\'ema en groupes
fini \'etale sur $U$), \`a lignes exactes~:
{\small
\begin{equation} \label{diagkummer}
\begin{CD}
0 @>>> H^{i-1}(U,\M)/n @>>> H^i(U,T_{\Z/n} \M) @>>> _n H^i(U,\M) @>>> 0 \cr
&& @VVV @VVV @VVV \cr
0 @>>> (_n H^{3-i}_c(U,\M^*))^D @>>> H^{3-i}_c(U,T_{\Z/n} \M^*)^D @>>> 
(H^{2-i}_c(U,\M^*)/n) ^D@>>> 0
\end{CD}
\end{equation}
}
La fl\`eche verticale du milieu est un isomorphisme par la 
Prop.~\ref{finigroup}.
En prenant $n=l^m$ (pour $m=1,2,...$) et en passant \`a la limite 
inductive sur $m$, on obtient un diagramme commutatif \`a lignes 
exactes et dont la fl\`eche verticale du milieu est un isomorphisme~:
{\small
$$
\begin{CD}
0 @>>> H^{i-1}(U,\M) \otimes \Q_l/\Z_l  @>>> \varinjlim 
H^i(U,T_{\Z/l^m} \M) @>>> H^i(U,\M)\{ l \} @>>> 0 \cr
&& @VVV @VVV @VVV \cr
0 @>>> (T_l H^{3-i}_c(U,\M^*))^D @>>> (\varprojlim 
H^{3-i}_c(U,T_{\Z/l^m} \M^*))^D @>>>
H^{2-i}_c(U,\M^*)^{(l) ^D}@>>> 0
\end{CD}
$$
}
Maintenant comme $H^{i-1}(U,\M) \otimes \Q_l/\Z_l$ est divisible, 
ce diagramme induit un isomorphisme 
$$(\varinjlim H^i(U,T_{\Z/l^m} \M))^{(l)} \simeq (H^i(U,\M)\{ l \})^{(l)}.$$
Comme le module de Tate $T_l H^{3-i}_c(U,\M^*)$ est sans torsion, on a 
de m\^eme un isomorphisme 
$$H^{2-i}_c(U,\M^*)^{(l)} \{ l \}\simeq \varprojlim
H^{3-i}_c(U,T_{\Z/l^m} \M^*)\{ l \}$$ 
qui induit par dualit\'e un isomorphisme
$$(\varprojlim H^{3-i}_c(U,T_{\Z/l^m} \M^*))\{ l \}^D \simeq 
H^{2-i}_c(U,\M^*)^{(l)} \{ l \} ^D,$$
ce qui d\'emontre finalement que la fl\`eche verticale de droite du 
diagramme pr\'ec\'edent induit un isomorphisme entre les groupes
$H^i(U,\M)\{ l \}^{(l)}$ et $H^{2-i}_c(U,\M^*)^{(l)} \{ l \} ^D$. 
On d\'emontre de m\^eme qu'on a un isomorphisme entre 
$H^i_c(U,\M)\{ l \})^{(l)}$ et $H^{2-i}(U,\M^*)^{(l)} \{ l \} ^D$
en rempla\c cant les groupes $H^i$ par les groupes $H^i_c$ et vice-versa
dans le diagramme~\ref{diagkummer}. 
\end{proof}

\medskip

En prenant successivement $\M=\T$ et $\M=\widehat{\T}[1]$, on obtient~:

\begin{cor}

Soit $i$ un entier avec $0 \leq i \leq 3$.

(i) L'accouplement 
$$H^{i}(U,\T) \times H^{3-i}_{c}(U, \widehat{\T} ) \to \Q/\Z(-1).$$
{\it induit } des accouplements parfaits de groupes finis
$$ H^{i}(U,\T)\{l\}^{(l)} \times H^{3-i}_{c}(U,\widehat{\T} )^{(l)} \{l\} \to \Q_{l}/\Z_{l}(-1)$$
et
$$ H^{i}(U,\T)^{(l)}\{l\}  \times H^{3-i}_{c}(U,\widehat{\T} ) \{l\}^{(l)} \to \Q_{l}/\Z_{l}(-1)$$

\bigskip

(ii) L'accouplement 
$$H^{i}(U,\widehat{\T}) \times H^{3-i}_{c}(U, {\T} ) \to \Q/\Z(-1).$$
{\it induit } des accouplements parfaits de groupes finis
$$ H^{i}(U,\widehat{\T})\{l\}^{(l)} \times H^{3-i}_{c}(U,\T )^{(l)} \{l\} \to \Q_{l}/\Z_{l}(-1)$$
et
$$ H^{i}(U,\widehat{\T})^{(l)}\{l\} \times H^{3-i}_{c}(U,\T )\{l\}^{(l)}  \to \Q_{l}/\Z_{l}(-1).$$

\end{cor}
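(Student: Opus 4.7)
\medskip

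\noindent\textbf{Esquisse de d\'emonstration.} Le corollaire est une cons\'equence directe du th\'eor\`eme pr\'ec\'edent appliqu\'e aux deux choix possibles du $1$-motif $\M$. La seule chose \`a faire est de traduire les accouplements obtenus dans le th\'eor\`eme via les identifications standard entre la cohomologie d'un complexe d\'ecal\'e et la cohomologie d\'ecal\'ee, \`a savoir $H^j_{(c)}(U,\widehat{\T}[1]) = H^{j+1}_{(c)}(U,\widehat{\T})$ (compte tenu de la convention rappel\'ee dans l'excerpt, o\`u le terme de droite d'un $1$-motif est plac\'e en degr\'e $0$).

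Pour l'\'enonc\'e (i), je prendrais $\M=\T$ dans le th\'eor\`eme, de sorte que $\M^*=\widehat{\T}[1]$. L'accouplement $H^i(U,\T)\times H^{2-i}_c(U,\widehat{\T}[1]) \to \Q/\Z(-1)$ fourni par le th\'eor\`eme devient alors, par d\'ecalage, exactement l'accouplement
$$H^i(U,\T)\times H^{3-i}_c(U,\widehat{\T}) \to \Q/\Z(-1)$$
pour $0\leq i\leq 2$, et les deux accouplements parfaits de groupes finis annonc\'es dans (i) se d\'eduisent mot pour mot des deux accouplements parfaits fournis par le th\'eor\`eme (passage aux composantes $\ell$-primaires et aux compl\'et\'es $\ell$-adiques, compatibles avec la dualit\'e).

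Pour l'\'enonc\'e (ii), je prendrais sym\'etriquement $\M=\widehat{\T}[1]$, de sorte que $\M^*=\T$. L'accouplement du th\'eor\`eme devient alors $H^{j+1}(U,\widehat{\T})\times H^{2-j}_c(U,\T)\to \Q/\Z(-1)$ pour $0\leq j\leq 2$ et, en posant $i=j+1$, on retrouve pour $1\leq i\leq 3$ l'accouplement
$$H^i(U,\widehat{\T})\times H^{3-i}_c(U,\T)\to \Q/\Z(-1)$$
avec les accouplements parfaits de groupes finis voulus. Aucune difficult\'e substantielle n'est \`a pr\'evoir : tout le travail essentiel a \'et\'e effectu\'e dans le th\'eor\`eme (et dans la compatibilit\'e~\ref{compatdiag} entre la dualit\'e au niveau des $1$-motifs et celle au niveau des r\'ealisations $\Z/n$-adiques), et le corollaire n'est qu'une lecture convenable de ses deux cas particuliers.
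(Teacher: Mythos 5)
Votre argument est correct et co\"{\i}ncide avec celui du texte, qui se borne \`a dire que le corollaire s'obtient en prenant successivement $\M=\T$ et $\M=\widehat{\T}[1]$ dans le th\'eor\`eme, le d\'ecalage $H^{j}_{(c)}(U,\widehat{\T}[1])=H^{j+1}_{(c)}(U,\widehat{\T})$ faisant le reste. Notez seulement que, pour couvrir toute la plage $0\leq i\leq 3$ annonc\'ee (cas $i=3$ de (i) et $i=0$ de (ii)), il faut aussi invoquer la seconde famille d'accouplements du th\'eor\`eme (celle \'echangeant $H^{i}_{c}$ et $H^{2-i}$), ou observer que ces cas extr\^emes sont d\'eg\'en\'er\'es.
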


\begin{rema}
{\rm Les \'enonc\'es de ce paragraphe sont valables pour un $1$-motif 
quelconque (y compris comportant une vari\'et\'e ab\'elienne); la preuve 
est identique via le fait (bien connu, mais pour lequel il est difficile 
de trouver une r\'ef\'erence) qu'on a encore le diagramme commutatif
(\ref{compatdiag}) dans ce contexte. Par contre les r\'esultats 
du paragraphe suivant ne sont pas valables pour des $1$-motifs quelconques,
faute d'avoir un bon th\'eor\`eme de dualit\'e locale pour les vari\'et\'es
ab\'eliennes sur les compl\'et\'es $K_v$ de $K$.
}
\end{rema}

\section{Groupes ab\'eliens finis : dualit\'e pour les groupes de Tate-Shafarevich}

Dans toute cette section, on pose $k=\C((t))$, et on d\'esigne 
par $C$ une $k$-courbe projective, lisse, g\'eom\'etriquement int\`egre, 
de corps des fonctions $K=k(C)$. Soit $\calf$ un groupe de type 
multiplicatif sur $U$ de dual de Cartier $\calf'=\Hom(\calf,\G_m)$.
Notons $F$ et $F'$ les fibres g\'en\'eriques respectives sur 
$\spec K$ de $\calf$ et $\calf'$.
Soit $U$ un ouvert non vide de $C$. Pour tout entier $i$ avec 
$0 \leq i \leq 3$, on a un accouplement d'Artin-Verdier 
$$H^i(U,\calf) \times H^{3-i}_c(U,\calf ') \to H^3_c(U,\G_m) \oi \Q/\Z(-1)$$
et pour toute place $v \in C^{(1)}$ un accouplement local 
$$H^i(K_v,F) \times H^{2-i}(K_v,F') \to \Br K_v \oi H^3_c(U,\G_m) 
\oi \Q/\Z(-1).$$ De m\^eme en \'echangeant $\calf$ et $\calf'$.

\begin{lem} \label{artlocal}
Avec les notations ci-dessus, on consid\`ere les fl\`eches 
$$H^i(U,\calf) \to H^{3-i}_c(U,\calf ')^D; \quad \prod_{v \in C^{(1)}} 
H^i(K_v,F) \to \bigoplus_{v \in C^{(1)}} H^{2-i}(K_v,F')^D$$
induites respectivement par l'accouplement d'Artin-Verdier et 
la somme des accouplements locaux. Alors on a un diagramme commutatif 
$$
\begin{CD}
H^i(U,\calf) @>>> \prod_{v \in C^{(1)}} H^i(K_v,F) \cr
@VVV @VVV \cr
H^{3-i}_c(U,\calf ')^D @>>> (\bigoplus_{v \in C^{(1)}} H^{2-i}(K_v,F'))^D
\end{CD}
$$
et de m\^eme en \'echangeant $\calf$ et $\calf'$.
\end{lem}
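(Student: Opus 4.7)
Le plan est de se ramener, par lin\'earit\'e, \`a une compatibilit\'e en une seule place, puis d'invoquer directement la proposition~\ref{compatcoh}~(f). Commen\c cons par observer que, la cible de la fl\`eche horizontale sup\'erieure et la source de la duale en bas \'etant une somme directe (respectivement un produit), la commutativit\'e demand\'ee \'equivaut \`a celle du carr\'e obtenu, pour chaque place $v \in C^{(1)}$ fix\'ee, en rempla\c cant le produit par le seul facteur en $v$. La fl\`eche horizontale inf\'erieure de ce carr\'e r\'eduit est alors la duale de la fl\`eche de bord $\partial_v : H^{2-i}(K_v,F') \to H^{3-i}_c(U,\calf')$ issue de la suite exacte longue~(\ref{longlocal}).

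Par d\'efinition des fl\`eches duales, la commutativit\'e voulue \'equivaut ensuite, pour tous $\alpha \in H^i(U,\calf)$ et $\beta \in H^{2-i}(K_v,F')$, \`a l'identit\'e
$$\alpha \cup \partial_v(\beta) \;=\; \partial_v\bigl(\alpha|_{K_v} \cup \beta\bigr)$$
dans $H^3_c(U,\G_m) = \Q/\Z(-1)$, o\`u \`a droite $\partial_v$ d\'esigne la fl\`eche de bord analogue $H^2(K_v,\G_m) \to H^3_c(U,\G_m)$. Or cette identit\'e est pr\'ecis\'ement la commutativit\'e du diagramme de la proposition~\ref{compatcoh}~(f) appliqu\'e avec $j=2-i$ : la fl\`eche verticale centrale de ce diagramme est exactement $\partial_v$, et les fl\`eches horizontales sont les accouplements d'Artin-Verdier global et local.

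Deux points m\'eritent commentaire. D'abord, la proposition~\ref{compatcoh}~(f) est \'enonc\'ee avec les hens\'elis\'es $K_v^h$, mais on passe aux compl\'et\'es $K_v$ via le th\'eor\`eme d'approximation de Greenberg, comme rappel\'e dans la discussion qui pr\'ec\`ede la proposition~\ref{compatcoh}. Ensuite, l'\'enonc\'e sym\'etrique obtenu en \'echangeant $\calf$ et $\calf'$ se prouve identiquement, en appliquant la seconde moiti\'e de la proposition~\ref{compatcoh}~(f). Le travail essentiel ayant d\'ej\`a \'et\'e fait dans cette derni\`ere, je ne pr\'evois pas d'obstacle substantiel ; le pr\'esent lemme n'en est qu'une reformulation commode, adapt\'ee \`a la comparaison directe entre groupes de Tate-Shafarevich et noyaux de fl\`eches de localisation qui sera utilis\'ee au paragraphe suivant.
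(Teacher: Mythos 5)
Votre preuve est correcte et suit exactement la même démarche que celle du texte : réduction par linéarité à une place fixée, puis application de la proposition~\ref{compatcoh}~(f) avec $j=2-i$, le passage des hensélisés $K_v^h$ aux complétés $K_v$ étant traité de la même façon. Votre rédaction explicite simplement la traduction de la commutativité du diagramme de (f) en l'identité d'adjonction $\alpha \cup \partial_v(\beta) = \partial_v(\alpha|_{K_v} \cup \beta)$, ce que le texte laisse implicite.
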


\begin{proof}
Il suffit d'appliquer la proposition~\ref{compatcoh} f) avec $j=2-i$ 
\`a chaque place $v \in C^{(1)}$, en notant aussi que la compatibilit\'e
avec les compl\'et\'es $K_v$ r\'esulte imm\'ediatement de celle avec 
les hens\'elis\'es $K_v^h$.
\end{proof}

\begin{theo}  \label{fini}
Soit $M$ un $K$-module fini; soit $M^0=\Hom_{\Z}(M,\Q/\Z(1))$.
On a un accouplement  parfait  de groupes ab\'eliens finis
$$\cyr{X}^1(K,M) \times \cyr{X}^2(K,M^0) \to \Q/\Z(-1).$$
 \end{theo}

\begin{proof}
Le reste de la section est consacr\'e \`a la d\'emonstration de ce th\'eor\`eme,
qui utilise la Prop.~\ref{compatcoh} et aussi les id\'ees de 
\cite{HaSz}, th\'eor\`eme~4.4. 
 
 Soit $M$ un $K$-module fini. Soit $U \subset C$ un ouvert non vide sur lequel $M$ (ainsi que $M^0$)
 s'\'etend en un $U$-sch\'ema en groupes ab\'eliens fini \'etale, encore not\'e $M$.
 
 Comme dans \cite[II, \S 2, p. 178]{milneEC} et  \cite[\S 3]{HaSz}
(cf. aussi la Prop.~\ref{compatcoh}, d),
on  introduit pour tout entier $i \geq 0$  les groupes 

$${\mathcal D}^{i}(U,M) = {\rm  Im}  [ H^{i}_{c}(U,M) \to H^{i}(K,M)] $$

$$D^{i}(U,M) = \Ker [H^{i}(U,M) \to \prod_{v \notin U} H^{i}(K_{v},M)]$$

Comme $H^{i}_{c}(U,M)$ et $H^{i}(U,M)$ sont finis, les groupes
${\mathcal D}^{i}(U,M) $ et $D^{i}(U,M)$ sont finis.  

\begin{lem} Soient $k=\C((t))$, $U/k$ une courbe lisse et $M$ un $U$-groupe 
fini \'etale.
Pour tout $v \in U^{(1)}$,
on a  $H^2(\calo_{v},M)=0$, et de m\^eme avec $\calo_v^h$ au lieu de $\calo_v$.
\end{lem}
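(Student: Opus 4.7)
The plan is to reduce both groups to Galois cohomology of the residue field $\kappa(v)$, and then invoke $\cd(\kappa(v))=1$.

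Since $v\in U^{(1)}$ is a closed point of the smooth $k$-curve $U$ with $k=\C((t))$, the residue field $\kappa(v)$ is a finite extension of $\C((t))$. As recalled at the start of \S\ref{lescorps}, every such extension is itself isomorphic to a field of the form $\C((s))$, and its absolute Galois group is $\widehat\Z(1)$; hence $\cd(\kappa(v))=1$, and in particular $H^j(\kappa(v),N)=0$ for every $j\geq 2$ and every torsion Galois module $N$.

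Both $\calo_v^h$ (the henselization of $\calo_{U,v}$) and $\calo_v$ (its completion, which the paper uses interchangeably with $\calo_v^h$ thanks to Greenberg approximation, cf.\ the remark after Proposition~\ref{extension}) are henselian local rings with residue field $\kappa(v)$. Since we work in characteristic zero, the order of the finite \'etale group scheme $M$ is automatically invertible in these rings, and the reduction at the closed point induces canonical isomorphisms
$$H^i(\calo_v,M)\;\iso\;H^i(\kappa(v),M) \quad\text{and}\quad H^i(\calo_v^h,M)\;\iso\;H^i(\kappa(v),M)$$
for every $i\geq 0$; this is the standard consequence of the equivalence of \'etale sites of a henselian local ring and its residue field (see e.g.\ \cite{milneEC}, Rem.\ III.3.11). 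Taking $i=2$ and applying the previous paragraph yields the desired vanishing.

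There is essentially no obstacle: the proof is an immediate combination of two standard facts. The only point requiring slight vigilance is the interpretation of $\calo_v$ as a henselian ring, i.e.\ as the completion of $\calo_{U,v}$ (or equivalently, via Greenberg, as $\calo_v^h$). For the non-henselian local ring $\calo_{U,v}$ itself one has $H^2(\calo_{U,v},\mu_n)\simeq {}_n\Br(\calo_{U,v})$ by the Kummer sequence and vanishing of $\Pic(\calo_{U,v})$, and this Brauer group typically does not vanish; thus the lemma must refer to the henselian version, which is indeed the one needed in the sequel and for which the reduction argument above gives the conclusion at once.
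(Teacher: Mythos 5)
Your proof is correct and follows exactly the paper's argument: identify $H^2(\calo_v,M)$ (resp. $H^2(\calo_v^h,M)$) with $H^2(\kappa(v),M)$ via henselianity, then use that $\kappa(v)$, being a finite extension of $\C((t))$, has cohomological dimension $1$. The additional remark correctly pinning down $\calo_v$ as the completed (hence henselian) local ring is consistent with the paper's usage and is the intended reading.
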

\begin{proof}
On a $H^2(\calo_{v},M)=H^2(\kappa_{v},M_{\kappa_{v}})$, et ce dernier groupe est nul
car $\kappa_{v}$ est de dimension cohomologique 1. De m\^eme pour $\calo_v^h$.
\end{proof}
 
 On voit donc que dans la situation ci-dessus on a
$$D^2(U,M) =  \Ker [H^{2}(U,M) \to \prod_{v \in C^{(1)}} H^{2}(K_{v},M)].$$
Par ailleurs la proposition~\ref{compatcoh} (e) 
s'applique avec $i=1$, et fournit une 
suite exacte 
$$\bigoplus_{v \in C^{(1)}} H^0(K_v,M^0) \to H^1_c(U,M^0) \to 
{\mathcal D}^1(U,M^0) \to 0.$$

Comme les groupes $ {\mathcal D}^{1}(U,M) \subset H^{1}(K,M)$ sont finis 
et d\'ecroissent avec $U$ (Prop.~\ref{compatcoh}, d), 
  ceci implique qu'il existe un ouvert non vide $U_{0} \subset U$
 tel que pour tout ouvert  non vide $V \subset U_{0}$ de $C$, on ait
 $${\mathcal D}^{1}(V,M^0) =  {\mathcal D}^{1}(U_{0},M^0).$$
On a alors aussi ${\mathcal D}^{1}(U_0,M^0)=\Sha^1(K,M^0)$. 
En effet si $\alpha$ est un \'el\'ement de ${\mathcal D}^{1}(U_{0},M^0)$, 
alors il est pour tout $V \subset U_0$ 
dans ${\mathcal D}^{1}(V,M^0)$, donc par d\'efinition dans l'image 
de $H^1_c(V,M^0)$, ce qui implique que sa restriction $\alpha_v$ 
\`a $H^1(K_v ^h,M^0)$ (et donc aussi \`a $H^1(K_v,M^0)$) 
est nulle pour tout $v \not \in V$ via la suite exacte (\ref{longlocal}).
Ceci \'etant vrai pour tout ouvert non vide $V \subset U_0$, on obtient 
que ${\mathcal D}^{1}(U_0,M^0) \subset \Sha^1(K,M^0)$. En sens inverse, 
tout \'el\'ement de $\Sha^1(K,M^0)$ se rel\`eve dans $H^1(U_0,M^0)$ 
(Prop.~\ref{extension}, ii), en un \'el\'ement qui provient de 
$H^1_c(U_0,M^0)$ d'apr\`es la 
suite exacte (\ref{longlocal}), ce qui prouve l'inclusion en sens inverse.

Quitte \`a r\'etr\'ecir $U$, on peut supposer que $U=U_0$. 
 
\medskip

 On consid\`ere alors le diagramme  (commutatif d'apr\`es le 
lemme~\ref{artlocal}) suivant  de suites exactes
 
 $$ 
 \begin{CD}
 0  &\to &D^2(U,M) &\to&  H^2(U,M) &\to& \prod_{v \in C^{(1)}}    H^2(K_{v},M) \cr
 &&  @VVV @VVV @VVV \cr
 0 &\to& {\mathcal D}^{1}(U,M^0)^D &\to & H^{1}_{c}(U,M^0)^D &\to &(\bigoplus_{v \in C^{(1)} }  H^0(K_{v},M^0))^D .
 \end{CD}
 $$

D'apr\`es la proposition \ref{finigroup}, la fl\`eche m\'ediane est un isomorphisme.
Les th\'eor\`emes de dualit\'e locale (Prop. \ref{dualitelocalefinie}) donnent que
la  fl\`eche de droite est un isomorphisme.   
  
  La fl\`eche induite $$ D^2(U,M)  \to {\mathcal D}^{1}(U,M^0)^D$$ est donc un isomorphisme de groupes
 ab\'eliens finis.
 On a  ${\mathcal D}^{1}(U,M^0) = \cyr{X}^1(K,M^0)$.
Il reste \`a voir le lien entre $ D^2(U,M) $ et $\cyr{X}^2(K,M)$.
En utilisant la proposition \ref{extension} on voit que pour $V \subset U$ avec $M$ un $U$-sch\'ema en groupes finis
\'etales,  la fl\`eche de restriction
$H^2(U,M) \to H^2(V,M)$ induit une surjection de groupes finis
$D^2(U,M) \to D^2(V,M)$.
En consid\'erant les images dans $H^2(K,M)$, on voit qu'il existe un ouvert 
fixe $U_{1} \subset C$ tel que pour tout ouvert non vide $U$ de $C$ les fl\`eches
$$D^2(U_{1},M) \to D^2(U\cap U_{1},M) \to \cyr{X}^2(K,M)$$
soient des isomorphismes. Quitte \`a restreindre encore $U$, on peut donc 
supposer que $D^2(U,M)=\Sha^2(K,M)$, ce qui termine la preuve.
\end{proof}
 
\section{Tores : dualit\'es pour les groupes de Tate-Shafarevich}

Soit $U\subset C $ un ouvert non vide et $\T$ un tore sur $U$ de fibre 
g\'en\'erique $T$. Pour tout groupe ab\'elien $A$, on note 
$\ov A$ le quotient de $A$ par son sous-groupe divisible maximal.

\begin{theo}\label{dualitesha2tore}
On a un accouplement parfait de groupes finis
$$\cyr{X}^1(K,\widehat{T})  \times \overline{\cyr{X}^2({K,T)}} \to \Q/\Z(-1).$$
\end{theo}

\begin{theo} \label{dualitesha1tore}
On a un accouplement parfait de groupes finis
$$\cyr{X}^1(K,T)  \times \overline{\cyr{X}^2({K,\widehat T)}} \to \Q/\Z(-1).$$
\end{theo}

\begin{proof} On suit la m\'ethode de \cite{HaSz}, th\'eor\`eme~4.1, en 
commen\c cant par des observations qui serviront pour les deux 
\'enonc\'es.
Pour $i=1,2$, et $\T$ tore sur $U$, on va encore utiliser les groupes
$$\sD^{i}(U,\T)  = {\rm Im} \, [H^{i}_{c}(U,\T)  \to H^{i}(K,T)].$$

Pour $i=1$, c'est un  groupe fini car $H^1(K,T)$ est d'exposant fini
et l'image se factorise donc par un quotient $H^{1}_{c}(U,\T)/n \subset H^2_{c}(U,{}_{n}\T)$.
Les groupes   $H^{2}_{c}(U,\T)$ et $H^{2}(U,T)$  sont de torsion, de  type cofini.
Ceci implique que les groupes $\sD^{2}(U,\T)\{l\}$ sont de  type cofini.
Fixons un nombre premier $l$.
Comme dans la prop. 3.6 de  \cite{HaSz} il existe un ouvert $U_{0}$
(d\'ependant de $l$) tel que pour tout ouvert  non vide $V \subset U_{0}$
on ait $\sD^{i}(V,\T)\{l\}= \sD^{i}(V,\T)\{l\} 
\subset H^{i}(K,T)$ pour $i=1,2$. Quitte \`a restreindre $U$, on peut 
supposer $U_0=U$. 
Comme dans la Prop. 3.6 de   \cite{HaSz}, on a 
alors 
$$\sD^{i}(U,T)\{l\} = \cyr{X}^{i}(K,T)\{l\}$$
(et de m\^eme pour tout ouvert non vide inclus dans $U$), 
puisque si $V \subset U$ tout \'el\'ement de $H^i_c(V,\calf)$ a une image nulle 
dans $H^i(K_v,F)$ pour $v \not \in V$.

\smallskip

On va aussi utiliser
$$\sD^{i}(U,\widehat{\T})  = {\rm Im} \, [H^{i}_{c}(U,\widehat{\T})  \to H^{i}(K,\widehat{T})].$$
Pour $i=1$ c'est  un groupe fini.
 Pour $i \geq 2$ c'est un groupe de torsion de  type cofini.
 L\`a encore on peut supposer que pour tout ouvert non vide $V \subset U$
on a $\sD^{i}(V,\widehat{\T})\{l\}= \sD^{i}(U,\widehat{\T})\{l\} \subset H^{i}(K,\widehat{T})$ pour $i=1,2$.

Partant de l\`a on obtient comme ci-dessus 
$$\sD^{i}(U, \widehat{T})) \{l\} = \cyr{X}^{i}(K,\widehat{T})\{l\},$$
et de m\^eme pour tout ouvert non vide inclus dans $U$.
Pour $i=2$, ce groupe n'est pas forc\'ement fini.

\bigskip

{\it Preuve du th\'eor\`eme~\ref{dualitesha2tore}.}

La prop.~\ref{compatcoh} e) s'applique \`a $\T$ pour $i=2$ (noter qu'ici 
$H^2(\calo_v,\T)=0$ car le corps r\'esiduel $\kappa(v)$ de $\calo_v$ est de 
dimension cohomologique $1$), et donne la  suite exacte
de groupes de torsion
$$\bigoplus_{v \in C^{(1)}} H^1(K_{v},T) \to H^2_{c}(U,\T) \to \sD^2(U,\T) \to 0,$$

On a des suites exactes induites
$$\bigoplus_{v \in C^{(1)}} H^1(K_{v},T)\{l\} \to H^2_{c}(U,\T)\{l\} \to \sD^2(U,\T)\{l\} \to 0.$$

 On a des suites exactes
$$ 0 \to  H^2_{c}(U,\T)\{l\}_{\div}  \to H^2_{c}(U,\T)\{l\} \to \overline{H^2_{c}(U,\T)\{l\}} \to 0$$
et 
$$0 \to \sD^2(U,\T)\{l\}_{\div} \to \sD^2(U,\T)\{l\} \to \overline{\sD^2(U,\T)\{l\}} \to 0.$$
En prenant les duaux $A \mapsto A^D$ (\`a coefficients $\Q_{l}/\Z_{l}(-1)$),
on obtient deux suites exactes 
$$ 0 \to \overline{H^2_{c}(U,\T)\{l\}} ^D \to  H^2_{c}(U,\T)\{l\}^D \to [H^2_{c}(U,\T)\{l\}_{\div}]^D \to 0$$
et
$$0  \to\overline{\sD^2(U,\T)\{l\}}^D \to \sD^2(U,\T)\{l\}^D \to [\sD^2(U,\T)\{l\}_{\div}]^D \to  0.$$
Dans ces deux derni\`eres  suites, les groupes de gauche sont finis, les groupes de droite sont des $\Z_{l}$-modules
de type fini sans torsion.

\bigskip

On d\'efinit
 $$D^{1}_{sh}(U,\widehat{\T}) = \Ker [H^{1}(U,\widehat{\T}) \to \prod_{v \in C^{(1)}} H^{1}(K_{v},\widehat{T})].$$
 
 Le groupe $H^{1}(U,\widehat{\T})$ est fini, donc aussi $D^{1}_{sh}(U,
\widehat{\T}) $.

\bigskip

On a le diagramme (commutatif par le lemme~\ref{artlocal})
 
 {
$$\begin{array}{ccccccccccc}
0  & \to &  D^{1}_{sh}(U,\widehat{\T}) \{l\} & \to & H^{1}(U,\widehat{\T})\{l\} & \to & \prod_{v \in C^{(1)}} H^{1}(K_{v},\widehat{T})\{l\} \cr
&& \downarrow && \downarrow& &  \downarrow   \\
0 & \to & \sD^2(U,\T)\{l\}^D & \to & H^2_{c}(U,\T)\{l\}^D &\to & [\bigoplus_{v \in C^{(1)}} H^1(K_{v},T)\{l\}]^D
\end{array}
$$}

Dans ce diagramme, les groupes $D^{1}_{sh}(U,\widehat{\T})$ et 
$H^{1}(U,\widehat{\T})$ sont finis.

Il r\'esulte alors de ce qui pr\'ec\`ede, et du fait que les groupes de droite sont d'exposant fini,
que ce diagramme se factorise en un diagramme commutatif de suites exactes : 
 {
$$\begin{array}{ccccccccccc}
0  & \to &  D^{1}_{sh}(U,\widehat{\T}) \{l\}  & \to & H^1(U,\widehat {\T}) \{l\}& \to & \prod_{v \in C^{(1)}} H^{1}(K_{v},\widehat{T})\{l\} \cr
&& \downarrow && \downarrow& &  \downarrow   \\
0 & \to & \overline{\sD^2(U,\T)\{l\}}^D & \to & \overline{H^2_{c}(U,\T)\{l\}}^D &\to & [\bigoplus_{v \in C^{(1)}} H^1(K_{v},T)\{l\}]^D.
\end{array}
$$}

On a vu que l'on a des accouplements parfaits de groupes finis
$$ H^{i}(U,\widehat{\T})^{(l)}\{l\} \times H^{3-i}_{c}(U,\T )\{l\}^{(l)}  \to \Q_{l}/\Z_{l}(-1).$$
Appliquant ceci \`a $i=1$ et utilisant le fait que $ H^{1}(U,\widehat{T)}$ est fini, on voit que la fl\`eche verticale m\'ediane
est un isomorphisme. Par les dualit\'es locales, la fl\`eche verticale de droite est un isomorphisme.
On obtient donc une dualit\'e parfaite de groupes finis :
$$ D^{1}_{sh}(U,\widehat{\T}) \{l\} \times \overline{\sD^2(U,\T)\{l\}} \to \Q_{l}/\Z_{l}(-1).$$
Or on a d\'ej\`a vu qu'on avait 
$$\sD^{2}(U,\T)\{l\} = \cyr{X}^{2}(K,T)\{l\}.$$
 Pour $\cyr{X}^1(K,\widehat{T})$, on utilise le lemme suivant. 
 
\medskip

\begin{lem}
 Pour tout ouvert $V \subset U$, la fl\`eche de restriction
$H^1(U,\widehat{\T} ) \to H^1(V,\widehat{\T})$ est un isomorphisme, et il en est de m\^eme
de la restriction $H^1(U,\widehat{\T} ) \to H^1(K,\widehat{T})$.
On a donc
$\cyr{X}^1(K,\widehat{T})=D^{1}_{sh}(U,\widehat{\T})$.
\end{lem}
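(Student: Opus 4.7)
L'id\'ee est de ramener le calcul de $H^1(V,\widehat{\T})$, pour tout ouvert $V \subset U$, et de $H^1(K,\widehat T)$ \`a un unique groupe $H^1(\Gamma,\widehat T)$, o\`u $\Gamma=\Gal(L/K)$ pour une extension finie galoisienne $L/K$ qui d\'eploie $\widehat T$. Quitte \`a restreindre $U$ (ce qui est d\'ej\`a admis dans la d\'emonstration en cours), je supposerais qu'il existe un rev\^etement fini \'etale galoisien $\pi:\tilde U\to U$ de groupe $\Gamma$ sur lequel le $U$-faisceau $\widehat\T$ devient constant de valeur $\widehat T$. Pour tout ouvert $V\subset U$, le changement de base $\tilde V := V\times_U\tilde U\to V$ reste fini \'etale galoisien de groupe $\Gamma$, avec $\tilde V$ lisse (donc normal) sur $k$.

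J'appliquerais ensuite la suite spectrale de Hochschild-Serre pour $\tilde V\to V$ :
$$E_2^{pq}=H^p(\Gamma,H^q(\tilde V,\widehat{\T})) \Longrightarrow H^{p+q}(V,\widehat{\T}).$$
On a $H^0(\tilde V,\widehat{\T})=\widehat T$ (muni de son action naturelle de $\Gamma$) et $H^1(\tilde V,\widehat{\T})=H^1(\tilde V,\Z)^r=0$, en utilisant d'une part que $\widehat{\T}$ est constant sur $\tilde V$ de valeur $\Z^r$ (avec $r$ le rang de $\widehat T$), et d'autre part la nullit\'e $H^1(X,\Z)=0$ pour $X$ normal (SGA 4, IX.3.6(ii), d\'ej\`a cit\'ee dans l'article). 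Il en r\'esulte un isomorphisme canonique $H^1(V,\widehat{\T})\simeq H^1(\Gamma,\widehat T)$, ind\'ependant de $V$, et la fonctorialit\'e de la suite spectrale montre que pour $V\subset U$ la restriction $H^1(U,\widehat{\T})\to H^1(V,\widehat{\T})$ s'identifie \`a l'identit\'e sur $H^1(\Gamma,\widehat T)$.

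Pour le point g\'en\'erique, la suite d'inflation-restriction pour $G_L\triangleleft G_K$ donne de m\^eme $H^1(K,\widehat T)\simeq H^1(\Gamma,\widehat T)$, via la nullit\'e $H^1(L,\widehat T)=\Hom_{\mathrm{cont}}(G_L,\widehat T)=0$ (absence d'homomorphisme continu non nul d'un groupe profini vers un $\Z$-module libre de type fini discret). La m\^eme fonctorialit\'e montre que la restriction $H^1(U,\widehat{\T})\to H^1(K,\widehat T)$ s'identifie \'egalement \`a l'identit\'e de $H^1(\Gamma,\widehat T)$, d'o\`u le premier \'enonc\'e du lemme. L'\'egalit\'e $\Sha^1(K,\widehat T)=D^1_{sh}(U,\widehat{\T})$ en r\'esulte par comparaison des noyaux de la restriction vers $\prod_{v\in C^{(1)}}H^1(K_v,\widehat T)$, qui se factorisent dans les deux cas via l'isomorphisme commun $H^1(U,\widehat{\T})\simeq H^1(K,\widehat T)$. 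Le point d\'elicat est de s'assurer que $U$ peut \^etre choisi suffisamment petit pour que $\tilde U\to U$ soit fini \'etale, ce qui est toujours possible en retirant de $U$ le lieu de ramification du rev\^etement normalis\'e induit par l'extension $L/K$ d\'eployant $T$.
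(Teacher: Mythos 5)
Votre argument est essentiellement celui du texte : le papier prend lui aussi un revêtement fini étale galoisien intègre $U'\to U$ de groupe $G$ déployant $\widehat{\T}$, applique la suite d'inflation-restriction et utilise la nullité de $H^1(U',\Z)$ (normalité) pour obtenir $H^1(G,\widehat{\T}(U'))\simeq H^1(U,\widehat{\T})$, l'indépendance en $V$ et la comparaison avec $H^1(K,\widehat T)$ s'en déduisant par la même fonctorialité que celle que vous invoquez. Votre précaution sur le rétrécissement de $U$ est superflue (un tore sur le schéma normal $U$ est automatiquement isotrivial), mais la preuve est correcte et suit la même voie.
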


\begin{proof}  Soit $U' \to U$ un rev\^etement fini \'etale galoisien int\`egre, de groupe $G$, d\'eployant $\widehat{\T}$.
On a alors la suite exacte
$$ 0 \to H^1(G,\widehat{\T}(U')) \to H^1(U,\widehat{\T}) \to H^1(U', \widehat{\T})$$
qui, comme $H^1(U',\Z)=0$ (car $U'$ est lisse et en particulier normal) donne un isomorphisme
$H^1(G,\widehat{\T}(U')) \simeq H^1(U,\widehat{\T})$. 
\end{proof}

On conclut finalement qu'on a pour chaque nombre premier $l$ une
dualit\'e parfaite entre les groupes finis 
$\overline{\cyr{X}^{2}(K,T)\{l\}}$ et $\cyr{X}^1(K,\widehat{T})\{l\}$, 
ce qui termine la preuve du th\'eor\`eme~\ref{dualitesha2tore} en raisonnant 
composante $l$-primaire par composante $l$-primaire.

\bigskip

{\it Preuve du th\'eor\`eme~\ref{dualitesha1tore}.} 

 On peut appliquer la Prop.~\ref{compatcoh} e) avec $i=1$ \`a 
$\widehat \calt$, car pour $v \in U$ on a un isomorphisme
$H^1(\calo_v,\widehat \calt) \to H^1(K_v,\widehat T)$ via la suite
de restriction-inflation et le fait que le groupe d'inertie de $K_v$ 
agit trivialement sur $\widehat T$. On en d\'eduit une
suite exacte de groupes de torsion
$$\bigoplus_{v \in C^{(1)}} H^1(K_{v},\widehat{T}) \to H^2_{c}(U,\widehat{\T}) \to \sD^2(U,\widehat{\T}) \to 0.$$

On d\'efinit
 $$D^{1}_{sh}(U,\T) = \Ker [H^{1}(U,\T) \to \prod_{v \in C^{(1)}} H^{1}(K_{v},T)].$$
 (rappel : ceci n'est en g\'en\'eral pas un groupe de torsion.)

On a le diagramme commutatif
 
 {
$$\begin{array}{ccccccccccc}

0  & \to &  D^{1}_{sh}(U,\T) \{l\} & \to & H^{1}(U,\T)  \{l\} & \to & \prod_{v \in C^{(1)}} H^{1}(K_{v},T)\{l\} \cr
&& \downarrow && \downarrow& &  \downarrow   \\
0 & \to & \sD^2(U,\widehat{\T})\{l\}^D & \to & H^2_{c}(U,\widehat{\T})\{l\}^D &\to & [\bigoplus_{v \in C^{(1)}} H^1(K_{v},\widehat{\T})\{l\}]^D

\end{array}
$$}

Le th\'eor\`eme donne un accouplement parfait de groupes finis
$$ H^{1}(U,\T)\{l\}^{(l)} \times H^{2}_{c}(U,\widehat{\T} )^{(l)} \{l\} \to \Q_{l}/\Z_{l}(-1).$$
Compte tenu du fait que $H^{2}_{c}(U,\widehat{\T} )$ est un groupe de torsion de  type cofini,
ceci se lit aussi comme
$$ \overline{H^{1}(U,\T)\{l\}  } \times \overline{H^{2}_{c}(U,\widehat{\T} ) \{l\}} \to \Q_{l}/\Z_{l}(-1).$$

Comme tout accouplement entre groupes de torsion passe au quotient par les sous-groupes divisibles maximaux,
et comme les groupes de droite dans le diagramme sont annul\'es par un entier fixe, ce diagramme induit
un diagramme commutatif de suites exactes
 {
$$\begin{array}{ccccccccccc}

0  & \to &  \overline{D^{1}_{sh}(U,\T) \{l\} } & \to & \overline{H^{1}(U,\T)  \{l\} } & \to & \prod_{v \in C^{(1)}} H^{1}(K_{v},T)\{l\} \cr
&& \downarrow && \downarrow& &  \downarrow   \\
0 & \to & \overline{\sD^2(U,\widehat{\T})\{l\}}^D & \to & \overline{H^2_{c}(U,\widehat{\T})\{l\}}^D &\to & [\bigoplus_{v \in C^{(1)}} H^1(K_{v},\widehat{\T})\{l\}]^D 

\end{array}
$$}
  qui induit donc un isomorphisme de groupes finis
$$ \overline{D^{1}_{sh}(U,\T) \{l\} } \simeq  \overline{\sD^2(U,\widehat{\T})\{l\}}^D $$
soit encore une dualit\'e parfaite de groupes finis
$$ \overline{D^{1}_{sh}(U,\T) \{l\} } \times  \overline{\sD^2(U,\widehat{\T})\{l\}} \to \Q_{l}/\Z_{l}(-1).$$

On a vu plus haut qu'on pouvait choisir $U$ de sorte que  
$$\sD^2(U,\widehat{\T})\{l\} \simeq \cyr{X}^2(K, \widehat{T})\{l\},$$
et de m\^eme pour tout ouvert non vide $V \subset U$.

Le groupe fini  $\cyr{X}^{1}(K,T)\{l\}$ est la limite inductive des 
$D^{1}_{sh}(V,\T) \{l\} $ pour $V \subset U$.
Comme l'image d'un groupe divisible est un groupe divisible, 
$\cyr{X}^{1}(K,T)\{l\}$ est aussi la limite inductive des groupes $\overline{D^{1}_{sh}(V,\T) \{l\} }$.

Ceci d\'emontre :
 
Pour tout $l$ premier, on a une dualit\'e parfaite de groupes finis entre
$\cyr{X}^{1}(K,T)\{l\}$ et $\overline{\cyr{X}^2(K, \widehat{T})\{l\}}$ 
par son sous-groupe divisible maximal.

Le groupe $\cyr{X}^{1}(K,T)\subset H^1(K,T)$  est annul\'e par multiplication par le degr\'e
de toute extension de corps de $K$ d\'eployant le $K$-tore $T$, d'o\`u on 
d\'eduit le r\'esultat.
\end{proof}

\begin{rema}
L'exemple~\ref{contreexrationnel} montre que $\Sha^1(K,T)$ peut \^etre non 
nul (un exemple plus explicite est d\'evelopp\'e dans la section \ref{exempledetaille}).
Montrons par ailleurs 
qu'il existe un r\'eseau $\widehat{T} $ avec $\Sha^1(K, \widehat{T} )
\neq 0$.
  Soit $E/\C$ une courbe elliptique. Soit $K=\C((t))(E)$.
D'apr\`es le corollaire~\ref{corshaZ/n}, on a
$\Sha^1(K,\Q/\Z) \simeq (\Q/\Z)^2$.
On a donc $\Sha^2(K,\Z)= (\Q/\Z)^2$.
On part d'une  classe $\xi$  non nulle dans $\Sha^2(K,\Z)$.
Il existe une extension $L$ finie galoisienne, de groupe $G$, telle que
$\xi$ s'annule dans $H^2(L, \Z)$.

La fl\`eche $\Z \to \Z[G]$ d\'efinie par la norme $N_G=\sum_{g \in G}g$
induit une suite exacte de $G$-modules
$$ 0 \to \Z \to \Z[G]  \to \widehat{T} \to 0,$$
o\`u $\widehat{T}$ est un r\'eseau.

On d\'eduit de cette suite un plongement
$$\Sha^1(K,  \widehat{T} ) \hookrightarrow \Sha^2(K, \Z),$$
et $\xi \in \Sha^2(K, \Z)$ est dans l'image de $\Sha^1(K,  \widehat{T} )$,
qui est donc  non nul.

\end{rema}

\section{Lien avec l'obstruction de r\'eciprocit\'e} \label{neuf}

On discute ici l'analogue du paragraphe 5 de \cite{HaSz}, et de consid\'erations classiques
sur les corps de nombres (voir \cite[Chap. 6]{Sk}).

\medskip

Soient  $K$ un corps, $G=\Gal({\overline K}/K)$ et $Y$ un espace principal homog\`ene d'un $K$-tore $T$.
Comme on a $\Pic({\overline Y})=0$,
de la suite spectrale
$$E_{2}^{pq} = H^p(K,H^q({\overline Y}, \G_{m})) \Longrightarrow  H^n(Y,\G_{m} )$$
on tire un isomorphisme
$$ H^2(G,{\overline K}[Y]^{\times})) \oi \Br_{1}(Y).$$
o\`u $\Br_{1}(Y) = \Ker [\Br(Y) \to \Br({\overline Y})]$.
On a la suite exacte de $G$-modules
$$ 0 \to {\overline K}^{\times} \to {\overline K}[Y]^{\times} \to \widehat{T} \to 0.$$
On a donc des suites exactes
$$ \Br(K) \to \Br_{1}(Y) \to  H^2(G,{\widehat T}) \to H^3(G,{\overline K}^{\times}).$$
Si l'on a ${\rm cd}(K) \leq 2$, alors on a $H^3(G,{\overline K}^{\times})=0$.

\bigskip

Soit $k=\C((t))$.
Supposons d\'esormais 
que $K=k(C)$ est le corps des fonctions d'une $k$-courbe $C$
projective, lisse, g\'eom\'etriquement connexe. On note $\Omega$ l'ensemble des
points ferm\'es de $C$.
Le corps $K$ et ses compl\'et\'es $K_{v}$ aux points ferm\'es $v \in \Omega$
 sont de dimension cohomologique 2.
\medskip

On renvoie \`a la Proposition \ref{cohocourbes} pour les \'enonc\'es  suivants. 
Pour chaque place $v$, on a $\Br(K_{v}) \oi H^1(\kappa_{v},\Q/\Z) \simeq \Q/\Z(-1)$.
On a une suite exacte
$$ 0 \to \Br(C) \to \Br(K) \to \bigoplus_{v \in \Omega} \Br(K_{v}) \to \Q/\Z(-1)  \to 0.$$
La nullit\'e de l'application compos\'ee $$ \Br(K) \to \bigoplus_{v \in \Omega} \Br(K_{v}) \to \Q/\Z(-1)$$
est un cas particulier de loi de r\'eciprocit\'e.

\subsection{Sous-groupes du groupe de Brauer}\label{sgpbrauer}

Soient $Y$ une $K$-vari\'et\'e lisse g\'eom\'etriquement int\`egre et $Y_{c}$ une $K$-compactification lisse.
Pour $v \in \Omega$, on note $Y_{v}=Y\times_{K}K_{v}$.

Le groupe $\Br_{1}(Y)=\ker [\Br Y \to \Br  {\overline{Y}}]$ contient les sous-groupes suivants : 

(a)  Le  groupe $\Br(Y_{c})=\Br_{1}(Y_{c}) \subset  \Br_{1}(Y)$.

(b) Le groupe $B(Y) \subset \Br_{1}(Y) $
 form\'e des \'el\'ements de $\Br_{1}(Y)$  dont l'image dans  
$\Br_{1}(Y_{v})$ appartient \`a
  l'image de $\Br(K_{v})$ pour toute
place $v \in C^{(1)}$.

(c)  Le groupe $B_{\omega}(Y) \subset \Br_{1}(Y) $
 form\'e des \'el\'ements de $\Br_{1}(Y)$  dont l'image dans $\Br_{1}(Y_{v})$
appartient \`a l'image de $\Br(K_{v})$ pour presque toute place 
$v \in C^{(1)}$. Un argument de bonne r\'eduction,
et la nullit\'e des groupes $\Br(O_{v})$ pour $O_{v}$ 
compl\'et\'e de l'anneau local
en $v \in C$, de corps r\'esiduel une extension finie de $\C((t))$, montre que $B_{\omega}(Y)$
est form\'e des   \'el\'ements de $\Br_{1}(Y)$ dont l'image est 
nulle dans $\Br_{1}(Y_{v})$ pour presque toute place $v \in C^{(1)}$.

\medskip

On note $\Br_a Y$ le quotient de $\Br_1 Y$ par l'image de $\Br \, K$.
On note  $\Br_a Y_c$ le quotient de $\Br_1 Y_c  $ par 
l'image de $\Br \, K$.

\medskip
Consid\'erons le cas o\`u $Y$ est un espace principal homog\`ene sous un $K$-tore $T$.
Pour presque toute place $v$, le groupe de d\'ecomposition de l'extension minimale
d\'eployant le $K$-tore $T$ est cyclique (voir le paragraphe \ref{lescorps}). 
De plus pour presque toute
place $v$ on a $Y_{v} \simeq T_{v}$. Le groupe de Brauer d'une compactification lisse d'un tore d\'eploy\'e par 
une extension cyclique du corps de base est r\'eduit au groupe de Brauer du corps de base (\cite{sansuc}, Th. 9.2.ii), r\'esultat d\^u aussi \`a 
Voskrensenski\v{\i}, cf. \cite{voskbook} Chap. 4, Section 11.6,
Cor. 3 page 122).
Le groupe $\Br(Y_{c})=\Br_1(Y_{c})$ est donc un sous-groupe du groupe  $B_{\omega}(Y)$.

On a   le diagramme   :
$$\begin{array}{ccccccccccc}
   B(Y) & \subset & B_{\omega}(Y) & \subset &  \Br_{1}(Y) \cr
 \uparrow && \uparrow{\subset} && \cr
 \Br(K)& \to & \Br(Y_{c}). &&   
  \end{array}
  $$

Par passage au quotient par l'image de $\Br(K)$ dans $B(Y)$, resp. dans  $B_{\omega}(Y)$,
on d\'efinit le groupe $ \cyr{B}(Y)$,  resp. le groupe $\cyr{B}_{\omega}(Y)$, et on obtient le
 diagramme :
$$\begin{array}{ccccccccccc}
  \cyr{B}(Y) & \subset & \cyr{B}_{\omega}(Y) & \subset &  \Br_{a}(Y) \cr
  && \uparrow{\subset} && \cr
 \ &   & \Br_a(Y_{c}) &&   
  \end{array}
  $$

D'apr\`es les pr\'eliminaires de ce paragraphe, on a  un diagramme commutatif de suites exactes
$$\begin{array}{ccccccccccc}
&   &  \Br(K)& \to & \Br_{1}(Y) & \to &  H^2(K,{\widehat T}) & \to & 0 \cr
&& \downarrow && \downarrow && \downarrow &&   \cr
 &&  \bigoplus_{v\in \Omega} \Br(K_{v}) & \to & \bigoplus_{v\in \Omega} \Br_{1}(Y_{v}) & \to & \bigoplus_{v\in \Omega}H^2(K_{v},{\widehat T}) & \to & 0
  \end{array}
  $$

On a donc un isomorphisme
$\cyr{B}(Y)  \oi \cyr{X}^2(K,\widehat{T})$ et un isomorphisme $\cyr{B}_{\omega}(Y)  \oi \cyr{X}^2_{\omega}(K,\widehat{T})$.
Pour $i\in \N$,  et un module galoisien $M$
 sur le corps $K$, on note  $\cyr{X}^{i}_{\rm cyc}(K,M)$ le noyau 
de la restriction
 $$H^{i}(G,M) \to \prod_{g \in G} H^{i}(<g>,M)$$
aux sous-groupes ferm\'es procycliques.
D'apr\`es   \cite[Prop. 9.5]{CTSflasque}, pour $T_{c}$ une compactification lisse de $T$,
on a $\Br(T_{c})/\Br(K) \simeq \cyr{X}^{2}_{\rm cyc}(K,\widehat{T})$. 
Comme ${\rm cd}(K) \leq 2$, 
d'apr\`es  \cite[Lemme 6.8]{sansuc} et  \cite[Prop. 2.2]{CTBordeaux},
 il existe un isomorphisme naturel  $ \Br_{1}(Y) \oi   \Br_{1}(T)$
compatible aux isomorphismes
 $ \Br_{a}(Y)\oi H^2(K,\widehat{T})$  et $ \Br_{a}(T) \oi H^2(K,\widehat{T})$,
 et induisant un isomorphisme $\Br_a(Y_{c}) \oi  \Br_a(T_{c})$.

 On obtient ainsi un isomorphisme $ \Br_{a}(Y_{c}) \oi  \cyr{X}^2_{\rm cyc}(K,\widehat{T})$.
Le diagramme ci-dessus se r\'ecrit alors :
$$\begin{array}{ccccccccccc}
  \cyr{X}^2(K,\widehat{T})& \subset & \cyr{X}^2_{\omega}(K,\widehat{T}) & \subset &  H^2(K,,\widehat{T}) \cr
  && \uparrow{\subset} && \cr
 \ &   & \cyr{X}^2_{\rm cyc}(K,\widehat{T}) &&   
  \end{array}
  $$

\begin{remas}
 
\`A la diff\'erence du cas o\`u $K$ est un corps global :

(i)  M\^eme si $Y$ a des points dans tous les
$K_{v}$ il n'est pas clair que $\Br(K)$ s'injecte dans $\Br(Y)$;

(ii) Le groupe $\cyr{X}^2_{\rm cyc}(K,\widehat{T})$ est fini (propri\'et\'e valable sur tout corps), mais
les groupes $\cyr{B}(Y) =\cyr{X}^2 (K,\widehat{T})$ et $\cyr{B}_{\omega}(Y) =\cyr{X}^2_{\omega} (K,\widehat{T}) $
peuvent \^etre infinis. Pour $Y=T=\G_{m}$, on a $$\cyr{X}^2 (K,\Z)= \cyr{X}^1(K,\Q/\Z)=  \cyr{X}^1_{\omega}(K,\Q/\Z)=H^1(C_{0},\Q/\Z),$$
o\`u $C_{0}$ est la courbe sur $\C$ fibre sp\'eciale d'un mod\`ele minimal de $C$ au-dessus de $\C[[t]]$ (voir le corollaire
\ref{shadivisible} et la d\'emonstration du corollaire \ref{sha1sha1omega}).

(iiii) Un \'el\'ement de  $ {B}_{\omega}(Y) $, ou m\^eme de  ${B}(Y) $, n'est pas n\'ecessairement dans $\Br(Y_{c})$.
\end{remas}

\subsection{Accouplement avec les points ad\'eliques}\label{accouplements}

Soit $Y$ une $k$-vari\'et\'e lisse g\'eom\'etriquement int\`egre,
poss\'edant des $K_{v}$-points pour tout $v \in \Omega$.
Soit $Y_{c}$ une 
$K$-compactifcation lisse de $Y$.

On a les inclusions

$$Y(\A_{K}) \subset \prod_{v} Y(K_{v}) \subset \prod_{v}Y_{c}(K_{v})=Y_{c}(\A_{K}).$$

En utilisant les isomorphismes $\partial_{v} : \Br(K_{v}) \oi  \Q/\Z(-1)$,
on peut d\'efinir plusieurs accouplements
\`a la Brauer--Manin
$$Y(\A_{K}) \times \Br(Y) \to \Q/\Z(-1)$$
$$ \prod_{v} Y(K_{v}) \times B_{\omega}(Y) \to \Q/\Z(-1)$$
$$ \prod_{v} Y_{c}(K_{v}) \times B(Y) \to \Q/\Z(-1)$$
envoyant le couple form\'e d'une famille $\{P_{v}\}_{v \in \Omega}$ et d'un \'el\'ement  $\alpha$
sur la  somme (qui n'a qu'un nombre fini de termes non nul)
$$ \sum_{v} \partial_{v}(\alpha(P_{v})) \in \Q/\Z(-1).$$
Comme on a le complexe 
$$\Br(K) \to \bigoplus_{v \in \Omega} \Br(K_{v}) \to \Q/\Z(-1),$$
ces accouplements ne d\'ependent que des quotients $\Br(Y)/\Br(K)$,
resp.   $\cyr{B}_{\omega}(Y)$, resp.  $\cyr{B}(Y)$.

Le dernier accouplement ne d\'epend pas du terme de gauche. 
En effet un \'el\'ement de $B(Y)$ provient d'un \'el\'ement    $\alpha \in \Br(Y)$ 
tel que pour tout $v \in \Omega$ il existe $\beta_{v} \in \Br(K_{v})$
dont l'image dans $\Br(Y_{v})$ co\"{\i}ncide avec l'image de $\alpha$,
la famille $\{\beta_{v}\}$ \'etant d\'efinie \`a addition de l'image diagonale
d'un \'el\'ement de $\Br(K)$. La somme
 $$ \sum_{v} \partial_{v}(\alpha(P_{v}))=   \sum_{v} \partial_{v}(\beta_{v})$$
est alors clairement ind\'ependante de la famille  $\{P_{v}\}$.
Ainsi  $Y$ d\'efinit un homomorphisme
$$  \rho_{Y}: \cyr{B}(Y) \to \Q/\Z(-1).$$
 
 On note $Y(\A_{K})^{\Br(Y)}$ le noyau \`a gauche de la premi\`ere fl\`eche,
 et on emploie des notations semblables pour les autres accouplements.
 Du  complexe ci-dessus 
on tire les inclusions
{\small
$$Y(K) \subset  Y(\A_{K})^{\Br(Y)}  \subset  [\prod_{v\in \Omega} Y(K_{v})]^{ \cyr{B}_{\omega}(Y) } \subset 
[\prod_{v\in \Omega} Y(K_{v})]^{  \cyr{B} (Y) }  \subset Y_{c}(\A_{K})^{\cyr{B}(Y)}.$$
}

\begin{prop}\label{compatiblelementaire}
Soit $T$ un $K$-tore et $Y$ un $K$-espace principal homog\`ene
poss\'edant des points dans tous les $K_{v}$. Soit 
   $\xi \in \cyr{X}^{1}(K,T)$ la classe de $Y$. Soit $\beta \in \cyr{B}(Y)$
   et $\gamma \in \cyr{X}^{2}(K,\widehat{T})$ son image par  l'isomorphisme
   $\cyr{B}(Y) \oi \cyr{X}^{2}(K,\widehat{T})$. On a l'\'egalit\'e :
   $$ \rho_{Y}(\beta) = <\xi, \gamma > \in \Q/\Z(-1),$$
   o\`u $<\xi,\gamma>$ est la valeur sur $(\xi,\gamma)$ de l'accouplement
   $$ \cyr{X}^{1}(K,T) \times \cyr{X}^{2}(K,\widehat{T}) \to \Q/\Z(-1)$$
  qui fait l'objet du  th\'eor\`eme \ref{dualitesha1tore}.
    \end{prop}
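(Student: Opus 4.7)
Le plan est de ramener l'identit\'e \`a un calcul explicite de cocycles bas\'e sur la suite exacte de $G_K$-modules
$$0 \to \G_m \to \overline{K}[Y]^{\times} \to \widehat{T} \to 0,$$
dont la classe d'extension de Yoneda dans $\mathrm{Ext}^1_{G_K}(\widehat{T},\G_m) = H^1(K,T)$ est pr\'ecis\'ement $\xi$ (identification classique d\'ej\`a utilis\'ee au d\'ebut du \S\ref{neuf}). La suite exacte longue associ\'ee identifie la fl\`eche $\Br_1(Y) \to H^2(K,\widehat{T})$ \`a un cobord surjectif (bijectif modulo l'image de $\Br K$, gr\^ace \`a $\mathrm{cd}(K) \leq 2$), et par construction $\gamma$ est l'image de $\beta$ par cette fl\`eche.

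D'abord, je calculerais le membre de gauche $\rho_Y(\beta)$. Comme $\beta \in B(Y)$, il existe pour chaque $v$ un $\beta_v \in \Br(K_v)$ dont l'image dans $\Br(Y_v)$ co\"incide avec $\beta|_{Y_v}$, donc $P_v^*\beta = \beta_v$ quelle que soit la famille $\{P_v\} \in \prod_v Y(K_v)$, d'o\`u $\rho_Y(\beta) = \sum_v \partial_v(\beta_v)$. Ensuite, je d\'eplierais la d\'efinition de l'accouplement $\langle \xi,\gamma\rangle$ du Th\'eor\`eme~\ref{dualitesha1tore}~: on choisit un ouvert non vide $U\subset C$ sur lequel $T$ s'\'etend en un tore $\calt$, $\xi$ se rel\`eve en une classe $\tilde\xi \in H^1(U,\calt)$ (provenant d'une classe dans $H^1_c(U,\calt)$ pour $U$ assez petit, gr\^ace \`a la suite exacte~(\ref{longlocal}) et \`a $\Sha^1$), et $\gamma$ se rel\`eve en une classe $\tilde\gamma \in \sD^2(U,\widehat\calt)$. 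L'accouplement est alors obtenu comme image du cup-produit dans $H^3_c(U,\G_m) = \Q/\Z(-1)$ via l'accouplement d'Artin--Verdier.

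Le travail technique principal, et l'obstacle attendu, est l'identification de ces deux expressions. La strat\'egie est de d\'ecomposer l'accouplement d'Artin--Verdier place par place \`a l'aide des suites exactes longues (\ref{longlocal}) et de la Prop.~\ref{compatcoh}~e), f). Puisque $\gamma_v = 0$ dans $H^2(K_v,\widehat{T})$ pour tout $v$, la composante locale de $\tilde\gamma$ se rel\`eve via $H^1(K_v^h,\widehat{T})$, ce qui fournit un cocycle local explicite dont le cup-produit avec la restriction locale de $\tilde\xi$ calcule pr\'ecis\'ement $\partial_v(\beta_v)$ en vertu de la dualit\'e locale (Prop.~\ref{dualitelocaletore}) et de la compatibilit\'e avec l'accouplement global (Prop.~\ref{compatcoh}~f)). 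La somme sur $v$ fournit alors l'\'egalit\'e $\sum_v \partial_v(\beta_v) = \langle \xi,\gamma\rangle$. C'est l'analogue dans notre cadre de la compatibilit\'e classique sur les corps de nombres (cf.~\cite[Chap.~6]{Sk} et \cite{sansuc}) et de celle du cas $p$-adique de \cite{HaSz}, et le m\^eme argument formel s'applique une fois le formalisme d'Artin--Verdier des \S5--6 mis en place.
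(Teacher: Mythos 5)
Your overall skeleton matches the paper's: extend $T$ to a torus over a small open $U\subset C$, lift $\xi$ to $H^1(U,\calt)$ and $\gamma$ to $H^2_c(U,\widehat\calt)$, identify the class of the extension $0 \to \G_m \to \overline{K}[Y]^{\times} \to \widehat{T} \to 0$ with $\xi$ (the paper does this via Lemmas 2.3.7 and 2.4.3 of \cite{Sk}), and reduce everything to a compatibility between $\rho_Y$ and the Artin--Verdier cup product. The paper obtains that compatibility by invoking Proposition 3.3 of \cite{dhsza2}, whose proof is a \emph{global} diagram chase on the exact sequence $\Br K \to \Br_1 Y \to H^2(K,\widehat T) \to 0$ and its model over $U$, yielding directly $\rho_Y(\beta)={\mathcal E}_Y\cup\gamma_U$ in $H^3_c(U,\G_m)$.

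Your proposed mechanism for that key step, however, fails as stated. You want to decompose the Artin--Verdier pairing place by place: lift ``the local component of $\widetilde\gamma$'' to $H^1(K_v^h,\widehat T)$ (using $\gamma_v=0$) and cup it with the local restriction of $\widetilde\xi$. There are two problems. First, $\gamma_U\in H^2_c(U,\widehat\calt)$ is \emph{not} a sum of classes coming from $\bigoplus_{v\notin U}H^1(K_v^h,\widehat T)$: by exactness of (\ref{longlocal}) that would force its image in $H^2(U,\widehat\calt)$, hence $\gamma$ itself, to vanish; the vanishing of $\gamma$ in each $H^2(K_v,\widehat T)$ separately does not produce such a decomposition, and without it the compatibility of Prop.~\ref{compatcoh}~f) gives you no handle on $\xi_U\cup\gamma_U$. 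Second, even granting local classes $\delta_v\in H^1(K_v,\widehat T)$, the restriction of $\widetilde\xi$ to $H^1(K_v,T)$ is the \emph{zero class} --- that is precisely what $\xi\in\Sha^1(K,T)$ means --- so every local cup product $\xi_v\cup\delta_v$ vanishes and your sum computes $0$, not $\sum_v\partial_v(\beta_v)$. The local points $P_v\in Y(K_v)$ and the evaluations $\beta(P_v)$, which carry all the content of $\rho_Y$, never enter your local computation. A correct argument must either follow the global route of \cite{dhsza2} or set up a genuine Cassels--Tate-style cochain computation in which what gets paired locally with $\gamma$ are the local trivialisations of the torsor $Y$ (i.e.\ $0$-cochains bounding $\xi_v$), together with a global correction term; as written, your sketch does neither.
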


\begin{proof}
La preuve est essentiellement identique \`a celle de 
\cite{dhsza2}, sections 3 et 4 (qui vaut sur un corps 
de nombres, pour un $1$-motif quelconque \`a la place du tore $T$); nous nous
bornons ici \`a en rappeler les grandes lignes. On commence par
\'etendre $T$ en un $U$-tore $\calt$, et on 
rel\`eve la classe $\xi \in \Sha^1(T)$ en une classe $\xi_U \in H^1(U,\calt)$
pour $U$ assez petit. De m\^eme on peut relever $\gamma$ en $\gamma_U \in 
H^2_c(U,{\widehat \calt})$. La proposition~3.3. de \cite{dhsza2} s'applique 
encore dans notre cadre, avec une preuve analogue (si ce n'est qu'il faut 
remplacer, dans le diagramme (7) de \cite{dhsza2}, le groupe $\Br k$ par 
son image dans $\Br_1 Y$) et donne 
$$\rho_Y(\beta)={\mathcal E}_Y \cup \gamma_U$$
o\`u ${\mathcal E}_Y \in {\rm Ext}^1_U({\widehat \calt}, \G_m)$ 
est la classe d'une extension dont la fibre g\'en\'erique sur $\spec K$ 
est celle de 
\begin{equation} \label{extph}
0 \to \ov K^* \to \ov K[Y]^* \to \ov K[Y]^*/\ov K^*=\widehat T \to 0.
\end{equation}
Ensuite, les lemmes~2.3.7. et 2.4.3. de de \cite{Sk} donnent
que la classe de l'extension 
(\ref{extph}) dans ${\rm Ext}^1_K(\widehat T,\G_m)=H^1(K,T)$ est pr\'ecis\'ement
celle de $\xi$. Quitte \`a r\'etr\'ecir $U$, on peut donc supposer 
que ${\mathcal E}_Y=\xi_U$, ce qui donne le r\'esultat d'apr\`es la 
d\'efinition de l'accouplement du th\'eor\`eme~\ref{dualitesha1tore}.

\end{proof}

On en d\'eduit l'analogue du th\'eor\`eme de Voskrenski\v{\i}-Sansuc 
sur les corps de nombres et du th\'eor\`eme~5.1 de \cite{HaSz} 
sur le corps de fonctions d'une courbe au-dessus d'un corps $p$-adique~:

\begin{cor}\label{elementairelaseule}  
Soit $Y$ un espace principal homog\`ene sous un $K$-tore.
Supposons que $Y$ poss\`ede des points dans tous les $K_{v}$.
L'obstruction \`a l'existence d'un point rationnel sur $Y$
associ\'ee au groupe  $B(Y)$  et \`a la loi de r\'eciprocit\'e sur $C$
est la seule obstruction \`a l'existence d'un point rationnel sur $Y$.
\end{cor}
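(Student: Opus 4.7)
The plan is to combine the compatibility result of Proposition~\ref{compatiblelementaire} with the perfect duality of Theorem~\ref{dualitesha1tore} in a straightforward way. Since $Y$ has a $K_v$-point at every $v\in\Omega$, its class $\xi\in H^1(K,T)$ lies in $\cyr{X}^1(K,T)$, and the question of rationality of $Y$ is equivalent to the vanishing of $\xi$ in this group.

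Suppose that $(P_v)_{v\in\Omega}\in\prod_v Y(K_v)$ is orthogonal to $B(Y)$ under the Brauer--Manin pairing of \S\ref{accouplements}, i.e.\ $\rho_Y(\beta)=0$ for every $\beta\in\cyr{B}(Y)$. Under the natural isomorphism $\cyr{B}(Y)\iso\cyr{X}^2(K,\widehat T)$ recalled in \S\ref{sgpbrauer}, Proposition~\ref{compatiblelementaire} rewrites this vanishing as
\[
\langle\xi,\gamma\rangle=0\quad\text{for every }\gamma\in\cyr{X}^2(K,\widehat T),
\]
where $\langle\,,\,\rangle$ denotes the Tate--Shafarevich pairing of Theorem~\ref{dualitesha1tore}.

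Now the group $\cyr{X}^1(K,T)$ is finite by Proposition~\ref{finitorsion} (or Proposition~\ref{finitudeShai}(ii)), so the pairing $\langle\xi,\cdot\rangle$ automatically factors through the quotient $\overline{\cyr{X}^2(K,\widehat T)}$ by the maximal divisible subgroup. Theorem~\ref{dualitesha1tore} asserts that the induced pairing
\[
\cyr{X}^1(K,T)\times\overline{\cyr{X}^2(K,\widehat T)}\to\Q/\Z(-1)
\]
is a perfect duality of finite groups, so the vanishing above forces $\xi=0$. Equivalently $Y(K)\neq\emptyset$, which is the desired conclusion.

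There is essentially no obstacle: the corollary is a direct consequence of two previously established results, and all one really has to check is that the combination is carried out correctly, namely that the left kernel of the Brauer--Manin pairing against $B(Y)$ is precisely the orthogonal of $\xi$ under $\langle\,,\,\rangle$, and that finiteness of $\cyr{X}^1(K,T)$ allows one to apply the perfectness statement (which is stated modulo the maximal divisible subgroup on the $\cyr{X}^2$ side). No new estimate or cohomological computation is needed beyond what has already been done in \S 4--7.
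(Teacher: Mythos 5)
Your argument is correct and is essentially identical to the paper's own proof: both reduce the vanishing of the Brauer--Manin pairing against $B(Y)$ to the orthogonality of the class $\xi\in\cyr{X}^1(K,T)$ with $\cyr{X}^2(K,\widehat T)$ via Proposition~\ref{compatiblelementaire}, and then invoke the left nondegeneracy of the pairing of Theorem~\ref{dualitesha1tore} (after passing to the quotient by the maximal divisible subgroup, which is legitimate since $\cyr{X}^1(K,T)$ has finite exponent) to conclude $\xi=0$. The only nitpick is that the finiteness of $\cyr{X}^1(K,T)$ is best cited from Proposition~\ref{finitudeShai}(ii) or from Theorem~\ref{dualitesha1tore} itself rather than from Proposition~\ref{finitorsion}, which concerns the groups $H^i(U,\calt)$.
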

\begin{proof}
 L'hypoth\`ese assure que l'application  $\rho_{Y}$ est nulle. La proposition 
  \ref{compatiblelementaire}  donne alors que la classe de $Y$ dans $ \cyr{X}^{1}(K,T)$
  est orthogonale \`a $\cyr{X}^{2}(K,\widehat{T})$ pour un accouplement qui,
  d'apr\`es le le th\'eor\`eme \ref{dualitesha1tore}, est non d\'eg\'en\'er\'e sur  $ \cyr{X}^{1}(K,T)$.
\end{proof}

\subsection{Un exemple.}\label{exempledetaille}

Nous pr\'esentons ici avec plus de d\'etails
un exemple donn\'e dans \cite{CTPaSu}, Prop. 3.2.
Cet exemple illustre la proposition~\ref{compatiblelementaire}
et le corollaire~\ref{elementairelaseule} ci-dessus.

\smallskip
 
Soient  $k=\C((t))$ et $K=\C((t))(x)$. On consid\`ere la $K$-vari\'et\'e  $E$
 d\'efinie par l'\'equation 

$$(X_{1}^2-xY_{1}^2)(X_{2}^2-(1+x)Y_{2}^2)(X_{3}^2-x(1+x)Y_{3}^2)=t.$$

\medskip

Assertion I.  {\it Si $P$ est un point ferm\'e de $\P^1_k$ l'un de $x, x+1, x(x+1)$ est un carr\'e dans le compl\'et\'e $K_{P}$, et donc $E(K_{P}) \neq \emptyset$ et $E_{K_{P}} $ est une vari\'et\'e $K_{P}$-rationnelle.}

  Calculons les valeurs dans $K_{P}$ pour tout point ferm\'e $P$.

Au point \`a l'infini, $x(x+1)$ est  un carr\'e. Mais ni $x$ ni $x+1$ ne le sont, car leur valuation est $-1$.

Au point $x=0$, $x+1$ est un carr\'e car en r\'eduction il vaut $1$. Ni $x$ ni $x(x+1)$ ne sont des carr\'es,
car leur valuation est $1$.

Au point $x=-1$, $x$ est un carr\'e car $-1$ est un carr\'e dans $k$. Ni $x+1$ ni $x(x+1)$ ne sont
des carr\'es, car leur valuation est $1$.

Soit $P$ un autre point ferm\'e de $\P^1_k$. Alors $x, x+1$ et $x(x+1)$ sont des unit\'es en $P$.

Soit $\xi$ la classe de $x$ dans le corps r\'esiduel $\kappa(P)$.
 On a donc $\xi\neq 0$
et $\xi \neq -1$. Soit $v$ la valuation  sur le corps r\'esiduel $\kappa(P)$, extension finie de $k=\C((t))$.

Si $v(\xi)<0$, alors $\xi(\xi+1)$ est un carr\'e dans $\kappa(P)$  et donc $x(x+1)$ est un carr\'e dans $K_{P}$.

Si $v(\xi)>0$ alors $\xi+1$ est un un carr\'e dans $\kappa(P)$ et donc $x+1$ est un carr\'e dans $K_{P}$.

Si $v(\xi)=0$ et $v(\xi+1)=0$ alors chacun de $\xi$, $\xi+1$ et $\xi(\xi+1)$ est une unit\'e  dans $\kappa(P)$.
Chacun  est  donc un carr\'e dans $\kappa(P)$. Chacun de  $x$, $x+1$,  $x(x+1)$ est donc
un carr\'e dans $K_{P}$.

Si $v(\xi)=0$ et $v(\xi+1)>0$, alors $\xi$ est 
est un carr\'e dans $\kappa(P)$
et donc   $x$ est un carr\'e dans $K_{P}$.

Ainsi pour toute valeur de $c \in K^{\times}$, en particulier pour $c=t$, l'\'equation
$$(X_{1}^2-xY_{1}^2)(X_{2}^2-(1+x)Y_{2}^2)(X_{3}^2-x(1+x)Y_{3}^2)= c$$
a des solutions dans tous les $K_{P}$ pour $P$ point ferm\'e de $\P^1_k$.

\medskip

Assertion II.  

 {\it  Soit $w$ la valuation de $K=k(\P^1)$ associ\'ee  au point g\'en\'erique de la fibre sp\'eciale de $\P^1_{\C[[t]}$.
On a $E(K_{w}) =\emptyset$, et donc $E(K)=\emptyset$.}

L'\'el\'ement $t$ est une uniformisante pour $w$, le corps r\'esiduel de $w$ est le corps $\C(x)$.  
Chacun des $x$, $x+1$, $x(x+1)$ est une $w$-unit\'e, et n'est pas un carr\'e dans le corps
r\'esiduel de $w$. On en d\'eduit que pour toute valeur $c$ non nulle de
$$(X_{1}^2-xY_{1}^2)(X_{2}^2-(1+x)Y_{2}^2)(X_{3}^2-x(1+x)Y_{3}^2)$$
avec les $X_{i}, Y_{i}$ dans $k(x)$,  on a $w(c)$ paire. Mais $w(t)=1$. Donc $t$
n'est pas repr\'esent\'e  par ce produit sur le compl\'et\'e  $K_{w}$ de $K$ en le point g\'en\'erique
de la fibre sp\'eciale, et donc pas sur $K$.

\medskip

Assertion III.  {\it Cet exemple s'explique  par la loi de r\'eciprocit\'e sur la courbe $\P^1_{k}$.}

  \medskip
  
  Notons $Z/K$   une compactification lisse de la $K$-vari\'et\'e $E$  d'\'equation
  $$(X_{1}^2-xY_{1}^2)(X_{2}^2-(1+x)Y_{2}^2)(X_{3}^2-x(1+x)Y_{3}^2)= t .$$

On sait \cite{CTBordeaux}  que le groupe de Brauer d'un mod\`ele projectif et lisse $Z$
est engendr\'e modulo $\Br(K)$ par l'\'el\'ement d'ordre 2  de $ \Br(K(Z))$ 
d\'efini par
{\small
$$A = (X_{1}^2-xY_{1}^2, x+1) =  (X_{1}^2-xY_{1}^2, x(x+1))  =$$
$$ ( X_{3}^2-x(1+x)Y_{3}^2,x+1)+ (t,x+1).$$
}

Pour $P$ parcourant les points ferm\'es de $\P^1_{k}$, on a vu  que $Z_{K_{P}}$ est
$K_{P}$-rationnel. Ceci implique $\Br Z_{K_{P}} =\Br K_{P}$.
La valeur de $A(M_{P}) \in Br(K_{P})$ ne d\'epend donc   pas du choix de $M_{P} \in Z(K_{P})$.

On a $H^1(k,\Z/2)=k^{\times}/k^{\times 2} \simeq \Z/2$. On a obstruction de r\'eciprocit\'e si l'on a :
$$\sum_{P} {\rm Cores}_{k(P)/k} (\delta_{P}(A(M_{P})) \neq 0 \in   \Z/2$$

\medskip

Au point $P$ d\'efini par $x=0$, $x+1$ est un carr\'e, donc $A_{K_{P}}=0$.

Au point  $P$ d\'efini par $x=\infty$, $x(x+1)$ est un carr\'e, donc $A_{K_{P}}=0$.

Au point d\'efini par $x=-1$, $x$ est un carr\'e, on trouve
$A_{K_{P}}=(t,x+1) \in Br(K_{P})$, et donc
$${\rm Cores}_{k(P)/k} \delta_{P}(A(M_{P})) =1 \in \Z/2.$$

Consid\'erons les   points ferm\'es $P \in \P^1_{k}$ o\`u $x$ et $x+1$ sont des unit\'es.
Soit $\xi$ la classe de $x$ dans le corps r\'esiduel $\kappa(P)$.
Soit $v$ la valuation  sur le corps r\'esiduel $\kappa(P)$, extension finie de $k=\C((t))$.

Si $v(\xi)<0$, alors $x(x+1)$ est un carr\'e dans $K_{P}$, donc $A_{K_{P}}=0$.

Si $v(\xi)>0$, alors $x+1$ est un carr\'e dans $K_{P}$, donc $A_{K_{P}}=0$.

Si $v(\xi)=0$, alors $x$ est un carr\'e dans $K_{P}$,
donc $A_{K_{P}}=(t,x+1)$, avec $t$ et $x+1$ unit\'es en $P$,
et donc $\partial_{P}(A_{K_{P}})=0$.

On obtient donc  
$$\sum_{P} {\rm Cores}_{k(P)/k} (\delta_{P}(A(M_{P})) = 1 \in \Z/2.$$
Il y a donc obstruction de r\'eciprocit\'e sur la courbe $\P^1_{k}$, et $Z(K)=\emptyset$.

\qed

Dans cet exemple, $E$ est un espace principal homog\`ene sous le $K$-tore $T$
 obtenu en rempla\c cant $t$ par $1$ dans le membre de droite de l'\'equation de $E$.
La classe de $A$ dans $\Br(Z) \subset \Br(E)$   est   constante
 en tout $v\in \Omega$. Elle d\'efinit donc un \'el\'ement de $B(E)$, dont l'image dans $\cyr{B}(E)$
 d\'efinit un \'el\'ement $\gamma
\in \cyr{X}^2(K,\widehat{T})$. La proposition \ref{compatiblelementaire} permet de  traduire
  le calcul fait ci-dessus en termes de l'accouplement
  $$ \cyr{X}^{1}(K,T) \times \cyr{X}^{2}(K,\widehat{T}) \to \Q/\Z(-1).$$

\begin{remas}

Dans l'exemple~\ref{contreexrationnel} (voir aussi  \cite[Exemple 2.6]{CTPaSu}), on a exhib\'e des
 espaces principaux homog\`enes $Y$ sous un $K$-tore $T$, avec $\prod_{v}Y(K_{v}) \neq \emptyset$
tels que
$\Br_a(Y_{c})=0$ et $Y(K)=\emptyset$.
D'apr\`es le corollaire \ref{elementairelaseule}, on doit pouvoir expliquer cet
exemple au moyen d'un \'el\'ement explicite de $B(Y)$, ou de $ \cyr{X}^{2}(K,\widehat{T})$,
mais cela ne semble pas imm\'ediat.

Les tores $T$ dans les exemples mentionn\'es sont $K$-rationnels.
Pour un tore $K$-rationnel sur un corps $K=\C((t))(C)$, c'est une cons\'equence de  th\'eor\`emes d'Harbater, Hartmann et Krashen,
et de travaux ant\'erieurs, que le principe local-global, par rapport \`a l'ensemble de toutes les valuations
discr\`etes de rang 1, vaut pour les espaces principaux homog\`enes de $K$-groupes lin\'eaires connexes
$K$-rationnels \cite[Cor. 8.11]{HHK2}. On doit donc pouvoir aussi \'etablir $Y(K)=\emptyset$
en consid\'erant un compl\'et\'e de $K$ en une place non triviale sur $\C((t))$, et c'est de fait facile
\`a \'etablir dans le cas pr\'esent.

 \end{remas}

\section{Approximation faible}

  Soient $k=\C((t))$ et $K=k(C)$ le corps des fonctions d'une courbe.
  
  Comme on a vu au d\'ebut de l'article, le corps $K$ est un corps ``de type arithm\'etique''
  au sens de \cite[\S 7]{Parimala}.  
  
  \subsection{Approximation faible pour les tores}

 Etant donn\'e $T$ un $K$-tore, on peut consid\'erer le probl\`eme
 d'approximation faible par rapport aux places $v \in C^{(1)}$.
On note $\overline{{T(K)}}$ l'adh\'erence de $T(K)$ dans le produit
direct des $T(K_v)$. De m\^eme, si $S$ est un sous-ensemble fini non vide
de $C^{(1)}$, on note $\overline{{T(K)}}_S$ l'adh\'erence de 
$T(K)$ dans $\prod_{v \in S} T(K_v)$. Le {\it d\'efaut d'approximation 
faible} pour $T$ est le conoyau de l'injection diagonale 
$\overline{{T(K)}} \to \prod_{v \in \Omega_K} T(K_v)$, ce dernier groupe 
\'etant muni de la topologie produit des topologies $v$-adiques. 

\begin{prop} \label{flasqueaf}
Soit $$1 \to Q \to R \to T \to 1$$
une r\'esolution flasque de $T$. Alors~: 

\smallskip

a) Les groupes $H^1(K_v,Q)$ sont nuls pour presque toute place $v \in C^{(1)}$, et le groupe $H^1(K,Q)$ est fini.

b) Le d\'efaut d'approximation 
faible pour $T$ est fini, et isomorphe au conoyau de la fl\`eche diagonale 
$H^1(K,Q) \to \bigoplus_{v \in \Omega_K} H^1(K_v,Q)$, qui est un homomorphisme
de groupes finis.
\end{prop}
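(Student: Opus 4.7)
Le plan consiste à adapter à notre cadre l'approche classique de Voskresenski\v{\i}--Sansuc pour le défaut d'approximation faible d'un tore (cf.~\cite{sansuc}, \S 9). On commence par établir a), la description de l'obstruction dans b) s'en déduisant par chasse au diagramme.

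Pour la première partie de a), je choisirai un ouvert non vide $U\subset C$ sur lequel $Q$ s'étend en un $U$-tore $\mathcal{Q}$. Pour $v\in U$, l'action de l'inertie $I_v\cong\hat\Z$ sur $\widehat Q$ est triviale, et la suite spectrale de Hochschild-Serre appliquée à $I_v\triangleleft G_v$ donne un isomorphisme $H^1(K_v,\widehat Q)\simeq H^1(G_v/I_v,\widehat Q)$, puisque $H^1(I_v,\widehat Q)=\Hom_{\cont}(\hat\Z,\widehat Q)=0$ (car $\widehat Q$ est un réseau discret sans torsion). Le groupe $G_v/I_v$ étant pro-cyclique isomorphe à $\hat\Z$ avec action se factorisant par un quotient cyclique fini $G$, et $\widehat Q$ étant flasque, la 2-périodicité de la cohomologie de Tate pour $G$ cyclique donne $H^1(G,\widehat Q)=\hat H^{-1}(G,\widehat Q)=0$. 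La dualité locale (Proposition~\ref{dualitelocaletore}) fournit alors $H^1(K_v,Q)=0$ pour $v\in U$. La finitude de $H^1(K,Q)$ s'obtient ensuite via la suite exacte
$$0\to\cyr{X}^1(K,Q)\to H^1(K,Q)\to\bigoplus_{v\notin U}H^1(K_v,Q),$$
dont les deux termes extrêmes sont finis (Propositions~\ref{finitudeShai}~(ii) et~\ref{dualitelocaletore}).

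Pour b), j'utiliserai la résolution flasque. Comme $H^1(K,R)=0=H^1(K_v,R)$ par Hilbert~90 et le lemme de Shapiro, les suites exactes longues donnent des identifications $H^1(K,Q)\simeq T(K)/\mathrm{Im}\,R(K)$ et $H^1(K_v,Q)\simeq T(K_v)/\mathrm{Im}\,R(K_v)$. Le tore quasi-trivial $R$ satisfaisant l'approximation faible (cas $\G_m$ par approximation des fonctions rationnelles sur la courbe, plus Shapiro), $R(K)$ est dense dans $\prod_v R(K_v)$. L'image de ce dernier dans $\prod_v T(K_v)$ est le noyau de $\prod_v T(K_v)\to\bigoplus_v H^1(K_v,Q)$ (somme directe grâce à a)), qui est ouvert et fermé puisque le but est discret. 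Une chasse au diagramme identifie alors $\overline{T(K)}$ à l'image réciproque de l'image de $H^1(K,Q)\to\bigoplus_v H^1(K_v,Q)$, d'où l'isomorphisme de conoyaux voulu, et la finitude de l'obstruction.

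Le point le plus délicat est l'annulation locale $H^1(K_v,Q)=0$ aux places de bonne réduction. Contrairement au cadre classique des corps globaux, où la dualité de Tate-Nakayama s'applique directement, la structure particulière du groupe de Galois absolu de $K_v\cong\C((s))((u))$ (extension de $\hat\Z$ par $\hat\Z$) nécessite un détour par la dualité locale établie au \S 4 puis par la périodicité de la cohomologie de Tate des groupes cycliques, qui fait basculer la propriété \emph{flasque} de $\widehat Q$ en propriété de \emph{coflasque} au niveau local.
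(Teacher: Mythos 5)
Your proof is correct, and part b) follows the same diagram chase as the paper (vanishing of $H^1$ of the quasi-trivial torus $R$, weak approximation for $R$, openness of the image of $\prod_v R(K_v)$). Part a), however, is argued by a genuinely different route. For the local vanishing, the paper invokes \cite[Thm. 2.2]{CTSflasque}: at a place of good reduction, $H^1(\calo_v,Q)$ surjects onto $H^1(K_v,Q)$ for a flasque torus, and $H^1(\calo_v,Q)=H^1(\kappa_v,Q)=0$ car ${\rm cd}(\kappa_v)=1$; you instead pass by the local duality $H^1(K_v,Q)\simeq H^1(K_v,\widehat Q)^D$ of la proposition~\ref{dualitelocaletore} and kill $H^1(K_v,\widehat Q)$ by inflation-restriction plus the $2$-périodicité de Tate pour le groupe de décomposition cyclique --- which is exactly the observation the paper only makes in the remark following the proposition. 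For the finiteness of $H^1(K,Q)$, the paper again uses the surjectivity $H^1(U,Q)\to H^1(K,Q)$ from \cite{CTSflasque} and the injection $H^1(U,Q)/n\hookrightarrow H^2(U,{}_nQ)$, whereas you sandwich $H^1(K,Q)$ between $\Sha^1(K,Q)$ (fini par la proposition~\ref{finitudeShai}) and a finite sum of local $H^1$. Your version trades the specific cohomological properties of flasque tori over regular schemes for the duality and finiteness results already established in the paper, at the (mild) cost of needing the local duality theorem where the paper's argument is purely residual; both are complete and of comparable length.
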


\begin{proof}
a) Soit $v$ une place de bonne r\'eduction pour le $K$-tore flasque $Q$.
Alors $H^1(O_{v},Q)$ se surjecte sur $H^1(K_{v},Q)$ \cite[Thm. 2.2]{CTSflasque},  mais 
$H^1(O_{v},Q)=H^1(\kappa_{v},Q)=0$ car ${\rm cd}(\kappa_{v})=1$. Finalement 
$H^1(K_v,Q)$ est nul en toute place de bonne r\'eduction de $Q$. 

 Il existe un ouvert $U \subset C$ sur lequel $Q$ s'\'etend en un $U$-tore flasque, notons-le $Q$
\cite[Prop. 1.5]{CTSflasque}.

La fl\`eche $H^1(U,Q) \to H^1(K,Q)$ est surjective \cite[Thm. 2.2]{CTSflasque}. Comme $H^1(K,Q)$
est annul\'e par un entier $n>0$, la fl\`eche se factorise par $H^1(U,Q)/n$,
qui est un sous-groupe du groupe   $H^2(U,{}_{n}Q)$, qui est fini
(Lemme \ref{finitudeUfini}). Donc $H^1(K,Q)$ est fini.

\smallskip

b) Comme le tore $R$ est quasi-trivial, on a $H^1(K,R)=H^1(K_v,R)=0$ 
pour toute place $v$. On en d\'eduit, pour tout sous-ensemble fini non vide
$S$ de $C^{(1)}$, un diagramme commutatif 
\`a lignes exactes~:
$$
\begin{CD}
R(K) @>>> T(K) @>>> H^1(K,Q) @>>> 0 \cr
@VVV @VVV @VVV \cr
\prod_{v \in S} R(K_v) @>>> \prod_{v \in S} T(K_v) @>>> \prod_{v \in S} 
H^1(K_v,Q) @>>> 0
\end{CD}
$$
o\`u les groupes sur la verticale de droite sont finis.
Le tore quasi-trivial (donc $K$-rationnel) $R$ v\'erifie l'approximation 
faible. On en d\'eduit une suite exacte 
$$ 0 \to \overline{{T(K)}}_S \to \prod_{v \in S} T(K_v) \to 
\Coker [H^1(K,Q) \to \bigoplus_{v \in S} H^1(K_v,Q)].$$
Comme $H^1(K_v,Q)=0$ en dehors de l'ensemble fini $S_0$ des places 
de mauvaise r\'eduction de $Q$, on obtient le r\'esultat.
\end{proof}

\begin{rema}
{\rm Si le $K_{v}$-tore $T$ est d\'eploy\'e par une extension cyclique,
alors $H^1(K_v,Q)$=0. On peut ainsi ``diminuer'' le nombre de places de mauvaise
r\'eduction \`a consid\'erer. De fait, dans la pr\'esente 
situation, en une place de bonne r\'eduction, le tore est bien
d\'eploy\'e par une extension cyclique, car toute extension de $\C((t))$ est cyclique.}
\end{rema}

\bigskip

Soit  $F$ un module galoisien fini sur $K=\C((t))(C)$.
Il s'\'etend en faisceau \'etale $\calf$ sur un ouvert  non vide $U \subset C$.
Pour tout entier $i \geq 0$, le produit restreint  $\P^{i}(F) \subset \prod_{v \in C^{(1)}} H^{i}(K_{v},F)$
est le sous-groupe form\'e des familles $\{\xi_{v}\}$ avec $\xi_{v} \in H^{i}(O_{v},F)$ pour presque tout $v \in  U$. Pour tout module galoisien $M$, et tout ensemble fini $S$ de $C^{(1)}$,
on note $\Sha^i_S(M)$ le sous-groupe de $H^i(K,M)$ constitu\'e 
des classes nulles en dehors de $S$.

En particulier le sous-groupe 
$\Sha^i_{\omega}(M)$ des classes nulles presque partout est la limite
inductive (sur l'ensemble des $S$ finis) des $\Sha^i_S(M)$.

\begin{prop} \label{finipoit}
Soit $F$ un module galoisien fini sur $K=\C((t))(C)$, de dual 
$\widehat F$.

 Les applications de dualit\'e locale  
$$ H^1(K_{v},F) \times H^1(K_{v},\widehat F) \to \Q/\Z(-1)$$
de la proposition  \ref{dualitelocalefinie}
induisent des applications ${\bf P}^1(F) \to H^1(K,\widehat F)^D$
et $\prod_{v \in S} H^1(K_v,F) \to \Sha^1_S(\widehat F)^D$.

(a) On a une suite exacte de groupes ab\'eliens
$$ H^1(K,F) \to {\bf P}^1(F) \to H^1(K,\widehat F)^D.$$

(b) Si $S$ est un sous-ensemble fini non vide de $C^{(1)}$, on a une suite exacte 
de groupes ab\'eliens
$$H^1(K,F) \to \prod_{v \in S} H^1(K_v,F) \to \Sha^1_S(\widehat F)^D 
\to \Sha^1(\widehat F)^D \to 0. $$
\end{prop}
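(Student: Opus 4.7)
The plan is to adapt the classical Poitou--Tate approach, combining Artin--Verdier duality on open subsets of $C$ (Proposition~\ref{finigroup}), local duality (Proposition~\ref{dualitelocalefinie}), and the localization long exact sequence~(\ref{longlocal}). First, fix an open $U_0 \subset C$ on which $F$ and $\widehat F$ extend to finite \'etale sheaves $\calf$ and $\widehat{\calf}$. For each $v \in U_0$, the subgroups $H^1(\calo_v^h,\calf) \subset H^1(K_v,F)$ and $H^1(\calo_v^h,\widehat{\calf}) \subset H^1(K_v,\widehat F)$ are mutual annihilators under local duality, since their pairing takes values in $H^2(\calo_v^h,\G_m) \oi \Br(\kappa_v) = 0$ (as $\cd(\kappa_v) \leq 1$). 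Combining the localization sequence for $\calf$ with the Artin--Verdier isomorphisms $H^2_c(U,\calf) \oi H^1(U,\widehat{\calf})^D$ and $H^2(U,\calf) \oi H^1_c(U,\widehat{\calf})^D$, and the compatibility of Proposition~\ref{compatcoh}(f) (whose derived-category argument applies verbatim to finite \'etale sheaves), yields for each $U \subset U_0$ an exact sequence
\[ H^1(U,\calf) \to \bigoplus_{v \notin U} H^1(K_v,F) \xrightarrow{\beta_U} H^1(U,\widehat{\calf})^D \to H^1_c(U,\widehat{\calf})^D, \]
with $\beta_U((\xi_v))(\eta) = \sum_{v \notin U}\langle\xi_v,\eta_{|K_v}\rangle$ given by the sum of local duality pairings.

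For part~(a), given $(\xi_v) \in \P^1(F)$ orthogonal to $H^1(K,\widehat F)$, choose $U \subset U_0$ such that $\xi_v \in H^1(\calo_v^h,\calf)$ for every $v \in U$ (possible since by definition of $\P^1$, only finitely many $\xi_v$ fail to be unramified); the unramified annihilation reduces the hypothesis to $\beta_U((\xi_v)_{v\notin U}) = 0$, yielding $\eta^U \in H^1(U,\calf)$ with $\eta^U_{|K_v} = \xi_v$ for $v \notin U$. To produce a global preimage I will enumerate the countable set $U \cap C^{(1)} = \{v_1,v_2,\dots\}$, set $U_k = U \setminus\{v_1,\dots,v_k\}$, and construct inductively compatible classes $\eta^{U_k} \in H^1(U_k,\calf)$ satisfying $\eta^{U_k}_{|K_v} = \xi_v$ for $v \notin U_k$ together with $\eta^{U_{k+1}} = \eta^{U_k}_{|U_{k+1}}$. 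The inductive step is the crux: the discrepancy $\eta^{U_{k+1}} - \eta^{U_k}_{|U_{k+1}}$ vanishes at every $v \notin U_{k+1}$, hence by~(\ref{longlocal}) comes from $H^1_c(U_{k+1},\calf) \to H^1(U_{k+1},\calf)$, and this class lifts to $H^1_c(U_k,\calf)$ by the functoriality of Proposition~\ref{compatcoh}(a). By Proposition~\ref{compatcoh}(d) the resulting correction to $\eta^{U_k}$ lies in the image of $H^1_c(U_k,\calf)$, hence is trivial at all $v \notin U_k$, and restricts correctly on $U_{k+1}$. Taking $\eta = \colim_k \eta^{U_k} \in H^1(K,F)$ gives $\eta_{|K_v} = \xi_v$ at every $v \in C^{(1)}$.

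For part~(b), I apply part~(a) to $\widehat F$ (the statement being symmetric in $F$ and $\widehat F$, since $\widehat{\widehat F} = F$): the resulting exact sequence $H^1(K,\widehat F) \to \P^1(\widehat F) \to H^1(K,F)^D$ implies, via extending $(\eta_v)_{v\in S}$ to $\P^1(\widehat F)$ by zero outside $S$, that the kernel of the induced map $\prod_{v\in S} H^1(K_v,\widehat F) \to H^1(K,F)^D$ coincides with the image of $\Sha^1_S(\widehat F) \to \prod_{v\in S} H^1(K_v,\widehat F)$ (restriction), whose kernel is $\Sha^1(\widehat F)$. Dualizing via local duality $\prod_{v \in S} H^1(K_v,F) \oi \prod_{v \in S} H^1(K_v,\widehat F)^D$ identifies the image of $H^1(K,F) \to \prod_{v\in S} H^1(K_v,F)$ with the annihilator of $\Sha^1_S(\widehat F)/\Sha^1(\widehat F)$, which is exactly the kernel of the pairing map $\prod_{v\in S} H^1(K_v,F) \to \Sha^1_S(\widehat F)^D$, and identifies the image of this pairing map with $(\Sha^1_S/\Sha^1)^D = \ker[\Sha^1_S(\widehat F)^D \to \Sha^1(\widehat F)^D]$. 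Surjectivity of the last arrow is the dual of the inclusion of finite groups $\Sha^1(\widehat F) \hookrightarrow \Sha^1_S(\widehat F)$ (finiteness from Proposition~\ref{finitudeShai} and the finiteness of $\prod_{v \in S} H^1(K_v,\widehat F)$). The principal obstacle is the compatible-lift construction in part~(a), which depends crucially on the commutativity properties of Proposition~\ref{compatcoh} linking $H^1_c$ and $H^1$ under open inclusions.
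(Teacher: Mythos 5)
Your overall route coincides with the paper's (which simply defers to \cite{HaSchSz}, Prop.~2.1 and Lemma~3.1): Artin--Verdier duality on opens of $C$ combined with the localization sequence (\ref{longlocal}) for part (a), then local duality and formal dualization to deduce (b) from (a) applied to $\widehat F$. The setup, the handling of the places outside $U$, and the deduction of (b) are sound. But the patching step in (a) --- which you yourself identify as the crux --- has a genuine gap, in fact two. First, $C^{(1)}$ is uncountable (already $\P^1(\C((t)))$ is, since $\C((t))$ is uncountable), so the enumeration $U\cap C^{(1)}=\{v_1,v_2,\dots\}$ on which your induction rests does not exist. Second, and more seriously, your claim that the discrepancy $\eta^{U_{k+1}}-\eta^{U_k}{}_{|U_{k+1}}$ vanishes at every $v\notin U_{k+1}$ fails at $v=v_{k+1}$: the inductive hypothesis only gives $\eta^{U_k}{}_{|K_v}=\xi_v$ for $v\notin U_k$, and at $v_{k+1}\in U_k$ the class $\eta^{U_k}$ is merely unramified --- nothing forces it to equal $\xi_{v_{k+1}}$. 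Hence the discrepancy need not come from $H^1_c(U_{k+1},\calf)$ and the correction mechanism collapses; moreover, correcting $\eta^{U_k}$ at stage $k+1$ destroys the compatibility with earlier stages and you give no reason the process stabilizes.

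The standard repair is a compactness argument rather than an induction. For each nonempty open $V$ contained in the locus where every $\xi_v$ is unramified, let $X_V\subset H^1(V,\calf)$ be the set of classes restricting to $\xi_v$ for all $v\notin V$. Your orthogonality argument shows each $X_V$ is nonempty; it is finite because $H^1(V,\calf)$ is (Lemme~\ref{finitudeUfini}); and for $W\subset V$ one gets an injection $X_W\hookrightarrow X_V$ compatible with the inclusions into $H^1(K,F)$, because a class of $X_W$ restricts at each $v\in V\setminus W$ to the unramified class $\xi_v$ and therefore extends to $V$ by purity (Prop.~\ref{extension}). A cofiltered inverse limit of nonempty finite sets over a directed index set is nonempty, and any element of $\varprojlim_V X_V$ is a global class restricting to $\xi_v$ at every place. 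With this replacement for the crux step, the rest of your argument (including the dualization yielding (b), which correctly uses that $A\mapsto A^{DD}$ is the identity on discrete torsion groups) goes through.
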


Le groupe $H^1(K, \widehat F)$ est en g\'en\'eral infini.
Dans la seconde suite, les groupes autres que $H^1(K,F)$ sont finis.
La finitude de $ \Sha^1_S(\widehat F)$ r\'esulte de la 
proposition \ref{extension} et du lemme \ref{finitudeUfini}.

\begin{proof} 
La d\'emonstration est enti\`erement analogue 
\`a celle de \cite[Prop. 2.1]{HaSchSz}, avec la diff\'erence qu'ici 
le corps $K$ est de dimension cohomologique 2.
 On passe par la dualit\'e 
d'Artin-Verdier (Prop. \ref{finigroup} ci-dessus) pour
les faisceaux \'etales finis sur un ouvert de la courbe $C$ pour obtenir 
le premier \'enonc\'e.
 On en d\'eduit le second en utilisant la dualit\'e
locale (Prop. \ref{dualitelocalefinie}), comme dans \cite[Lemma 3.1]{HaSchSz}.
\end{proof}

\begin{theo}
Soit $T$ un $K$-tore. 

Soit $S$ un ensemble fini non vide de places de $K$. 
Alors on a une suite exacte de groupes ab\'eliens
$$0 \to \overline{{T(K)}}_S \to \prod_{v \in S} T(K_v) \to 
\Sha^2_S(\widehat T)^D \to \Sha^2(\widehat T)^D \to 0,$$
l'application
$$\prod_{v \in S} T(K_v) \to 
\Sha^2_S(\widehat T)^D$$
 \'etant induite par les applications
de dualit\'e locale
$$T(K_{v}) \times H^2(K_{v}, \widehat T) \to \Q/\Z(-1)$$
de la proposition \ref{dualitelocaletore}. 

Dans cette suite exacte, l'image de 
$ \prod_{v \in S} T(K_v) \to 
\Sha^2_S(\widehat T)^D $ est un groupe fini.
\end{theo}

Noter que l'application $\prod_{v \in S} T(K_v) \to
\Sha^2_S(\widehat T)^D$ est continue via la remarque~\ref{topolocale}.

\begin{proof} On montre d'abord deux lemmes~:

\begin{lem} \label{lemme1}
La suite de groupes ab\'eliens
$$T(K) \to \prod_{v \in S} T(K_v) \to \Sha^2_S(\widehat T)^D$$
est un complexe. 
\end{lem}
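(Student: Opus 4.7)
Le plan est d'utiliser l'argument classique de r\'eciprocit\'e. Fixons $P \in T(K)$ d'images $P_v \in T(K_v)$, et $\alpha \in \Sha^2_S(\widehat T) \subset H^2(K,\widehat T)$. Il s'agit de montrer que
$$\sum_{v \in S} \langle P_v, \alpha_v \rangle_v = 0 \in \Q/\Z(-1),$$
o\`u $\langle \cdot, \cdot \rangle_v$ d\'esigne l'accouplement de dualit\'e locale de la Proposition~\ref{dualitelocaletore}. Puisque $\alpha \in \Sha^2_S(\widehat T)$ signifie par d\'efinition que $\alpha_v = 0$ pour tout $v \notin S$, cette somme co\"{\i}ncide avec la somme \'etendue \`a toutes les places $\sum_{v \in \Omega} \langle P_v, \alpha_v \rangle_v$, qui n'a qu'un nombre fini de termes non nuls.

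L'\'etape suivante consiste \`a interpr\'eter chaque accouplement local comme issu d'un cup-produit. L'accouplement canonique $T \otimes_{\Z} \widehat T \to \G_m$ induit par cup-produit, pour chaque place $v$, un accouplement
$$H^0(K_v,T) \times H^2(K_v,\widehat T) \to H^2(K_v,\G_m) = \Br K_v \oi \Q/\Z(-1),$$
le dernier isomorphisme \'etant celui de la Proposition~\ref{cohocourbes}(iv). Par construction de la Proposition~\ref{dualitelocaletore}, qui passe par les diagrammes Kummer-compatibles du Lemme~\ref{compatkummer}, cet accouplement par cup-produit co\"{\i}ncide avec l'accouplement $\langle \cdot, \cdot \rangle_v$. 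Par fonctorialit\'e du cup-produit par rapport \`a la restriction, on a de plus, pour chaque $v$, l'\'egalit\'e $P_v \cup \alpha_v = (P \cup \alpha)_v$ dans $\Br K_v$, o\`u $P \cup \alpha \in H^2(K,\G_m) = \Br K$ d\'esigne le cup-produit global.

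On conclut alors par la loi de r\'eciprocit\'e fournie par la Proposition~\ref{cohocourbes}(v) : la compos\'ee
$$\Br K \to \bigoplus_{v \in C^{(1)}} \Br K_v \to \Q/\Z(-1)$$
est nulle. Appliqu\'ee \`a la classe $P \cup \alpha \in \Br K$, elle donne
$$\sum_{v \in \Omega} \langle P_v, \alpha_v \rangle_v = \sum_{v \in \Omega} \mathrm{inv}_v\bigl((P \cup \alpha)_v\bigr) = 0,$$
ce qui termine la preuve.

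Le seul point \'eventuellement d\'elicat est la compatibilit\'e entre l'accouplement local de la Proposition~\ref{dualitelocaletore}, construit par passage \`a la limite sur les accouplements de Kummer $H^1(K_v,{}_n T) \times H^1(K_v,\widehat T/n) \to \mu_n$, et l'accouplement par cup-produit mis en jeu dans la loi de r\'eciprocit\'e. Mais cette compatibilit\'e est fournie au niveau fini par le Lemme~\ref{compatkummer} et se propage aux limites inductive et projective via les deux diagrammes commutatifs qui d\'efinissent l'accouplement dans la preuve de la Proposition~\ref{dualitelocaletore} ; aucune id\'ee nouvelle n'est donc requise.
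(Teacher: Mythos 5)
Votre preuve est correcte, mais elle suit une route diff\'erente de celle du texte. Vous utilisez l'argument classique de r\'eciprocit\'e : l'accouplement local de la Proposition~\ref{dualitelocaletore} est par d\'efinition le cup-produit $H^0(K_v,T)\times H^2(K_v,\widehat T)\to \Br K_v$, la fonctorialit\'e du cup-produit donne $P_v\cup\alpha_v=(P\cup\alpha)_v$, et la nullit\'e de la compos\'ee $\Br K\to\bigoplus_v\Br K_v\to\Q/\Z(-1)$ (Proposition~\ref{cohocourbes}~(v)) conclut. Le texte proc\`ede autrement : il \'etend $t$ et $\alpha$ \`a un ouvert $U$ avec $S=C\setminus U$, interpr\`ete $\sum_{v\in S}\langle t_v,\alpha_v\rangle$ comme l'accouplement d'Artin--Verdier $\langle u,\alpha\rangle$ o\`u $u$ est l'image de $(t_v)$ dans $H^1_c(U,\T)$ (compatibilit\'e du lemme~\ref{artlocal}), puis constate que $u=0$ par la suite de localisation $H^0(U,\T)\to\bigoplus_{v\in S}H^0(K_v^h,T)\to H^1_c(U,\T)$. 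Votre approche est plus \'el\'ementaire et ne requiert ni le passage par la cohomologie \`a support compact ni la Proposition~\ref{compatcoh}~f) ; celle du texte s'ins\`ere dans la machinerie d\'ej\`a mise en place (et r\'eutilis\'ee pour le lemme~\ref{lemme2}) et \'evite toute question de normalisation. C'est l\`a le seul point que vous devriez expliciter davantage : il faut que la fl\`eche $\bigoplus_v\Br K_v\to\Q/\Z(-1)$ de la suite exacte de la Proposition~\ref{cohocourbes}~(v) soit bien la somme des invariants $\partial_v$ utilis\'es dans les dualit\'es locales --- fait que le texte affirme et exploite au \S\ref{accouplements}, de sorte que votre argument est l\'egitime. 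En revanche, la compatibilit\'e que vous signalez \`a la fin (cup-produit contre limite des accouplements de Kummer) n'est pas un vrai enjeu, puisque l'accouplement de la Proposition~\ref{dualitelocaletore} est d\'efini directement comme le cup-produit, la th\'eorie de Kummer ne servant qu'\`a en \'etablir la perfection.
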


On note d'abord qu'on a des accouplements locaux 
$$H^0(K_v^h,T) \times H^2(K_v^h,\widehat T)=H^2(K_v,\widehat T) \to 
\Br K_v^h=\Br K_v \simeq \Q/\Z(-1)$$
qui sont compatibles avec les accouplements analogues faisant 
intervenir $H^0(K_v,T)$ au lieu de $H^0(K_v^h,T)$. Il suffit donc 
de d\'emontrer que la suite 
$$T(K) \to \prod_{v \in S} T(K_v^h) \to \Sha^2_S(\widehat T)^D$$
est un complexe,
la derni\`ere fl\`eche \'etant induite par les accouplements locaux.

\smallskip

Soient $t \in T(K)$ et $\alpha \in \Sha^2_S(\widehat T)$. 
Soit $U$ un ouvert affine non vide de $C$ tel que 
$T$ s'\'etende en un $U$-tore $\calt$ avec de plus~: 
$t \in \calt(U)=H^0(U,\calt)$ et $\alpha$ se rel\`eve en un 
\'el\'ement (encore not\'e $\alpha$) de $H^2(U,\widehat \T)$.
On peut supposer (quitte \`a restreindre $U$ et \`a augmenter $S$) que 
$S=C-U$. Soient $(t_v)$ l'image de $t$ dans $\prod_{v \in S} T(K_v^h)$ et 
$\theta$ l'image de $(t_v)$ dans $\Sha^2_S(\widehat T)^D$. Alors 
$$\theta(\alpha)=\sum_{v \in S} \langle t_v , \alpha_v \rangle$$
est aussi donn\'e par l'accouplement d'Artin-Verdier 
$\langle u, \alpha \rangle$, o\`u $u$ est l'image de $(t_v)$ dans 
$H^1_c(U,\T)$ car on a compatibilit\'e entre accouplement 
d'Artin-Verdier et accouplements locaux (lemme~\ref{artlocal}).
Mais $u$ est nul via la suite de localisation  \cite[Prop. II.2.3 p. 166]{milneADT}
$$H^0(U,\T) \to \bigoplus_{v \in S} H^0(K_v ^h,T) \to H^1_c(U,\T).$$
Ceci d\'emontre le premier lemme.

\smallskip

\begin{lem} \label{lemme2}
La suite de groupes ab\'eliens
$$0 \to \overline{{T(K)}}_S \to \prod_{v \in S} T(K_v) 
\to \Sha^2_S(\widehat T)^D$$
est exacte, et l'image de $\prod_{v \in S} T(K_v) 
\to \Sha^2_S(\widehat T)^D$ est un groupe fini.
\end{lem}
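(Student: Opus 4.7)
First, Lemma~\ref{lemme1} provides the complex $T(K) \to \prod_{v \in S} T(K_v) \to \Sha^2_S(\widehat{T})^D$. The composed map is continuous: each local factor $T(K_v) \to H^2(K_v, \widehat{T})^D$ is the adjoint of the local duality of Prop.~\ref{dualitelocaletore} and therefore factors through the profinite completion of $T(K_v)$ by Rem.~\ref{topolocale}, while the target is Hausdorff (and indeed discrete once the image is shown to be finite). Hence $\overline{T(K)}_S$ is contained in the kernel.

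For the reverse inclusion and finiteness, I fix a flasque resolution $1 \to Q \to R \to T \to 1$ with $R$ quasi-trivial. Since $R$ is $K$-rational it satisfies weak approximation, and Hilbert 90 yields $H^1(K_v, R) = H^1(K, R) = 0$. Repeating the diagram chase from the proof of Prop.~\ref{flasqueaf}(b), one obtains an exact sequence
$$0 \to \overline{T(K)}_S \to \prod_{v \in S} T(K_v) \xrightarrow{\delta} \Coker\Bigl[H^1(K, Q) \to \bigoplus_{v \in S} H^1(K_v, Q)\Bigr],$$
whose right term is finite, since $H^1(K, Q)$ and each $H^1(K_v, Q)$ are finite (Prop.~\ref{flasqueaf}(a), together with the finiteness of $H^1(K_v, Q)$ coming from Prop.~\ref{dualitelocaletore}). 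Thus it suffices to prove that $\prod_{v \in S} T(K_v) \to \Sha^2_S(\widehat{T})^D$ factors through $\delta$ and that the induced map on the cokernel is injective; both assertions of the lemma then follow.

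The factorization uses the compatibility of local duality with the boundary $T(K_v) \to H^1(K_v, Q)$ and its adjoint $H^1(K_v, \widehat{Q}) \to H^2(K_v, \widehat{T})$ induced by the dual sequence $0 \to \widehat{T} \to \widehat{R} \to \widehat{Q} \to 0$. For $\alpha \in \Sha^2_S(\widehat{T})$, its image $\alpha'' \in H^2(K, \widehat{R})$ satisfies $\alpha''_v = 0$ for $v \notin S$, so $\alpha'' \in \Sha^2_S(\widehat{R}) \subset \Sha^2_\omega(\widehat{R}) = \Sha^2(\widehat{R})$ by Prop.~\ref{sha1sha1omega}(iv); consequently $\alpha''_v = 0$ for every $v$, and for $(r_v) \in \prod_{v \in S} R(K_v)$ with image $(t_v) \in \prod_{v \in S} T(K_v)$ the identity $\langle t_v, \alpha_v \rangle = \langle r_v, \alpha''_v \rangle$ forces $\sum_{v \in S} \langle t_v, \alpha_v \rangle = 0$. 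Hence the pairing descends to the cokernel of $\delta$. For the injectivity of the induced map, I reduce by Kummer theory to Prop.~\ref{finipoit}: choosing $n$ that kills $H^1(K, Q)$ and every $H^1(K_v, Q)$, the groups $H^1(K, Q)$ and $H^1(K_v, Q)$ become quotients of $H^1(K, {}_n Q)$ and $H^1(K_v, {}_n Q)$ respectively; Prop.~\ref{finipoit} applied to $F = {}_n Q$ (Cartier dual $\widehat{Q}/n$) together with the long exact $\Sha^*_S$-sequence associated to $0 \to \widehat{T}/n \to \widehat{R}/n \to \widehat{Q}/n \to 0$ then realises the cokernel of $\delta$ as a subgroup of $\Sha^2_S(\widehat{T})^D$. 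The main obstacle lies in this final diagram chase, which must cope with the absence of Chebotarev (so $\Sha^2(\widehat{R})$ need not vanish and elements of $\Sha^2_S(\widehat{T})$ need not globally lift to $H^1(K, \widehat{Q})$); the divisibility of $\Sha^2(\widehat{R})$ recorded in Prop.~\ref{sha1sha1omega}(iv) is precisely the input that compensates, given the finite-exponent nature of the cokernel.
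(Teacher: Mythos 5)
Your first half is sound and essentially parallel to the paper: the containment $\overline{T(K)}_S \subset \ker$ via Lemma~\ref{lemme1} and continuity, the identification of $\ker\delta$ with $\overline{T(K)}_S$ coming from Proposition~\ref{flasqueaf}, and the observation that the image of $\prod_{v\in S}R(K_v)$ is killed by the pairing because $\Sha^2_S(\widehat R)=\Sha^2(\widehat R)$ (Proposition~\ref{sha1sha1omega}~(iv)) are all correct; together they give the factorization through $\delta$ and the finiteness of the image.

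The gap is in the injectivity step, and it is created by your choice of a \emph{flasque} resolution rather than a resolution with finite kernel. What your strategy ultimately requires is that an element of $\prod_{v\in S}H^1(K_v,Q)$ orthogonal to the relevant dual data come from $H^1(K,Q)$, i.e.\ a Poitou--Tate exactness statement for the flasque torus $Q$. But for $Q$ flasque one has $\Sha^1_S(\widehat Q)=H^1(K,\widehat Q)=\Sha^1_{\cyc}(\widehat Q)$, whose image in $\Sha^2_S(\widehat T)$ is only $\Sha^2_{\cyc}(\widehat T)$, and the paper's Proposition~\ref{contrexwa} together with the corollary that follows it shows precisely that the sequence $H^1(K,Q)\to\bigoplus_v H^1(K_v,Q)\to H^1(K,\widehat Q)^D$ can fail to be exact: the flasque resolution simply does not carry enough dual information. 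Your proposed rescue --- lifting $(\partial t_v)$ to $\prod_{v\in S}H^1(K_v,{}_nQ)$ and invoking Proposition~\ref{finipoit} for ${}_nQ$ --- is not substantiated: the lift is non-canonical, and the hypothesis $(t_v)\perp\Sha^2_S(\widehat T)$ gives no control on the pairing of that lift against $\Sha^1_S(\widehat Q/n)$, since the connecting map sends $\Sha^1_S(\widehat Q/n)$ into $\Sha^2_S(\widehat T/n)$, which admits no natural map to $\Sha^2_S(\widehat T)$ compatible with the evaluation pairing on $T(K_v)$. The paper sidesteps all of this by invoking Ono's lemma to reduce to a resolution $0\to F\to R\to T\to 0$ with $F$ \emph{finite} and $R$ quasi-trivial; then $A_S(T)\simeq\Coker\,[H^1(K,F)\to\prod_{v\in S}H^1(K_v,F)]$, Proposition~\ref{finipoit}~(b) applies directly to $F$, and the composite $A_S(T)\to\Sha^2_S(\widehat T)^D\to\Sha^1_S(\widehat F)^D$ is injective, whence the injectivity of $A_S(T)\to\Sha^2_S(\widehat T)^D$. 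This use of a finite kernel is not cosmetic; it is the essential point your argument is missing.
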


L'exactitude se montre   comme dans 
le Th. 3.3. de \cite{HaSchSz}, en se ramenant (par le lemme d'Ono) 
au cas o\`u  $T$ admet une r\'esolution $$0 \to F \to R \to T \to 0$$ 
avec $F$ fini et $R$ tore quasi-trivial.
On a alors  le diagramme commutatif
\begin{equation} \label{grandiag}
\begin{CD}
R(K) @>>> T(K) @>>> H^1(K,F) @>>> 0 \cr
@VVV @VVV @VVV \cr
\prod_{v \in S} R(K_v) @>>> \prod_{v \in S} T(K_v) @>>> \prod_{v \in S} 
H^1(K_v,F) @>>> 0 \cr
@VVV @VVV @VVV \cr
 \Sha^2_S(\widehat R)^D  @>>>   \Sha^2_S(\widehat T)^D   @>>> \Sha^1_S(\widehat F)^D @>>> 0
\end{CD}
\end{equation}
o\`u les suites horizontales sont exactes, les suites 
verticales sont des complexes, et la suite verticale de droite
est exacte par la proposition~\ref{finipoit}.

 Le tore $R$ est quasi-trivial. La proposition 
 \ref{sha1sha1omega} 
montre   que l'on a  $\Sha^2_S(\widehat R)=\Sha^2(\widehat R)$.
Ceci implique que l'application
$\prod_{v \in S} R(K_v) 
\to \Sha^2_S(\widehat R)^D$
est nulle. 

Du diagramme on conclut que
  la fl\`eche
 $\prod_{v \in S} T(K_v) 
\to \Sha^2_S(\widehat T)^D$
se factorise par le groupe fini $\prod_{v\in S}H^1(K_{v},F)$.

Le tore $R$ \'etant quasitrivial satisfait l'approximation faible.
L'adh\'erence de $T(K)$ dans $ \prod_{v \in S} T(K_v)$
est le sous-groupe ouvert d'indice fini engendr\'e par $T(K)$
et l'image de $\prod_{v \in S} R(K_v) \to  \prod_{v \in S} T(K_v)$.
Notons $A_{S}(T)$ le groupe fini quotient de $ \prod_{v \in S} T(K_v)$
par l'adh\'erence de $T(K)$.

Du diagramme on d\'eduit  un homomomorphisme
$A_{S}(T) \to \Sha^2_S(\widehat T)^D$
qui compos\'e avec 
$ \Sha^2_S(\widehat T)^D \to \Sha^1_S(\widehat F)^D $
est injectif. Ainsi $A_{S}(T) \to \Sha^2_S(\widehat T)^D$
est aussi injectif.

\begin{rema}
{\rm 
Conservons les notations de la d\'emonstration. 
De la suite exacte
$$0 \to \widehat{T} \to \widehat{R} \to \widehat{F} \to 0$$
on d\'eduit une suite exacte (d\'efinissant le groupe $B$)
$$0 \to \Sha^1_{S}(\widehat{F}) \to   \Sha^2_{S}(\widehat{T})  \to   
\Sha^2_{S}(\widehat{R}) \to B \to 0,$$
et l'on v\'erifie que groupe $B$ est annul\'e par  le carr\'e de l'ordre du
groupe fini $\widehat{F}$.
Comme $\widehat{R}$ est un module de permutation, 
  la proposition~\ref{sha1sha1omega}
montre que l'on a $$  \Sha^2(\widehat{R})= \Sha^2_{S}(\widehat{R})= 
\Sha^2_{\omega}(\widehat{R})$$
et que ce groupe est divisible. Comme $B$ est annul\'e par un entier non nul, ceci implique $B=0$.
La suite  inf\'erieure dans le diagramme (\ref{grandiag}) est  donc exacte 
aussi \`a gauche.
Le groupe $ \Sha^2_{S}(\widehat{R}) ^D$ est sans torsion.

S'il en \'etait besoin, ceci permettrait de donner une 
d\'emonstration alternative 
du lemme~\ref{lemme1}, \`a savoir que la verticale m\'ediane 
dans le diagramme (\ref{grandiag})
est un complexe. Soit en effet $\xi \in T(K)$. 
Soit $\rho \in  \Sha^2_{S}(\widehat{T})^D$ 
son image par l'application compos\'ee. Comme la verticale de droite est un
 complexe,
ceci implique qu $\rho=\sigma$ avec $\sigma$ dans le sous-groupe sans torsion
$ \Sha^2_{S}(\widehat{R})^D \subset  \Sha^2_{S}(\widehat{T})^D$.
Comme $H^1(K,F)$ est annul\'e par l'ordre $n$ de $F$, on voit que
$n\xi$ est image de $\eta \in R(K)$. 
L'image de $\eta$ par l'application compos\'ee dans
 $ \Sha^2_{S}(\widehat{R})^D$ est nulle. On a donc $n\sigma=0$
 mais comme $ \Sha^2_{S}(\widehat{R})^D$ est sans torsion, $\sigma=0$
 et donc $\rho=0$.
}
\end{rema}

\begin{rema}
{\rm
On voit ainsi que l'adh\'erence du groupe $T(K)$ dans $\prod_{v\in S}T(K_v) $
a son image nulle dans $ \Sha^2_S(\widehat T)^D$.
On aurait pu \'etablir ce fait directement en rappelant  que
l'application 
 $\prod_{v\in S}T(K_v)  \to \prod_{v\in S} H^2(K_{v},\widehat T)^D$
 est continue. On obtient ainsi une d\'emonstration qui n'utilise pas
 l'\'egalit\'e $\Sha^2_S(\widehat R)=\Sha^2(\widehat R)$.
}
\end{rema}

Pour finir la preuve du th\'eor\`eme,  il faut \'etablir
l'exactitude des trois derniers termes 
$$\prod_{v \in S} T(K_v) \to
\Sha^2_S(\widehat T)^D \to \Sha^2(\widehat T)^D \to 0.$$
Celle-ci se montre comme dans le Th\'eor\`eme  3.3. de \cite{HaSchSz}.

On part de la suite exacte \'evidente de groupes discrets de torsion
$$ 0 \to \Sha^2(\widehat T) \to \Sha^2_S(\widehat T) \to \bigoplus_{v \in S} H^2(K_{v},\widehat T).$$
Par dualit\'e on obtient une suite exacte de groupes compacts
$$ \prod_{v \in S}H^2(K_{v},\widehat T)^D \to \Sha^2_S(\widehat T)^D \to  \Sha^2(\widehat T)^D \to 0,$$
o\`u les fl\`eches sont continues. 

Par dualit\'e locale, on a une application 
$$\theta : \prod_{v \in S}T(K_{v}) \to \prod_{v \in S} H^2(K_{v},\widehat T)^D$$
qui a une image dense; l'application $\theta$ est continue via la 
remarque~\ref{topolocale}.

Il suffit alors de montrer que l'image de l'application compos\'ee
$$\prod_{v \in S}T(K_{v}) \to  \prod_{v \in S}H^2(K_{v},\widehat T)^D \to \Sha^2_S(\widehat T)^D$$
co\"{\i}ncide avec celle de l'application continue
$$\prod_{v \in S}H^2(K_{v},\widehat T)^D \to \Sha^2_S(\widehat T)^D.$$
D'apr\`es le lemme \ref{lemme2},
 l'image de l'application compos\'ee ci-dessus est finie.
La densit\'e de l'image de $\theta$ assure alors
que l'image de l'application compos\'ee co\"{\i}ncide avec celle de
$ \prod_{v \in S}H^2(K_{v},\widehat T)^D \to \Sha^2_S(\widehat T)^D.$
\end{proof}

\begin{cor}\label{coropresqueV}
On a une suite exacte de groupes ab\'eliens
$$0 \to \overline{{T(K)}} \to \prod_{v \in \Omega_K} T(K_v) \to 
\Sha^2_{\omega}(\widehat T)^D \to \Sha^2(\widehat T)^D \to 0,$$
o\`u l'image de l'application $\prod_{v \in \Omega_K} T(K_v) \to 
\Sha^2_{\omega}(\widehat T)^D$ est un groupe fini. Plus pr\'ecis\'ement,
soit $S$ un ensemble fini de places contenant toutes les places
de mauvaise r\'eduction de $T$. Alors
$A_S(T)=A(T)$ et $\Sha^2_S(\widehat T)=\Sha^2_{\omega} (\widehat T)$.
\end{cor}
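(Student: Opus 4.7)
The plan is to deduce this corollary from the preceding theorem applied to a suitable finite set $S$, by establishing the two refinements $A_S(T)=A(T)$ and $\Sha^2_S(\widehat T) = \Sha^2_{\omega}(\widehat T)$ once $S$ contains the bad reduction places of $T$ (enlarging $S$ if necessary to absorb the bad reduction of an auxiliary flasque resolution), and then propagating the exact sequence from $\prod_{v \in S}T(K_v)$ to $\prod_{v \in \Omega_K} T(K_v)$.

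To prove $A_S(T)=A(T)$, I would fix a flasque resolution $1 \to Q \to R \to T \to 1$ and enlarge $S$ so that $Q$ has good reduction outside $S$. Proposition~\ref{flasqueaf}(a) then gives $H^1(K_v,Q)=0$ for all $v \notin S$, so the inclusion $\bigoplus_{v \in S} H^1(K_v,Q) \hookrightarrow \bigoplus_{v \in \Omega_K} H^1(K_v,Q)$ is an equality. Since Proposition~\ref{flasqueaf}(b) identifies, for any finite $S' \subset \Omega_K$, the group $A_{S'}(T)$ with the cokernel of $H^1(K,Q) \to \bigoplus_{v \in S'} H^1(K_v,Q)$, the desired equality follows.

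The main obstacle is the equality $\Sha^2_S(\widehat T) = \Sha^2_{\omega}(\widehat T)$. I would dualize the above resolution to obtain $0 \to \widehat T \to \widehat R \to \widehat Q \to 0$ with $\widehat R$ a permutation module. Given $\alpha \in \Sha^2_{\omega}(\widehat T)$ with image $\beta \in H^2(K,\widehat R)$, the class $\beta$ lies in $\Sha^2_{\omega}(\widehat R)$, which coincides with $\Sha^2(\widehat R)$ by Proposition~\ref{sha1sha1omega}(iv); in particular $\beta_v=0$ for every $v$. Hence each local class $\alpha_v$ is the image of some element of $H^1(K_v,\widehat Q)$. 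For $v \notin S$, Proposition~\ref{flasqueaf}(a) gives $H^1(K_v,Q)=0$, and the perfect local duality between $H^1(K_v,Q)$ and $H^1(K_v,\widehat Q)$ provided by Proposition~\ref{dualitelocaletore} forces $H^1(K_v,\widehat Q)=0$. Consequently $\alpha_v=0$ for all $v \notin S$, and $\alpha \in \Sha^2_S(\widehat T)$.

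To conclude, plugging the two equalities into the exact sequence of the preceding theorem yields
$$0 \to \overline{T(K)}_S \to \prod_{v \in S} T(K_v) \to \Sha^2_{\omega}(\widehat T)^D \to \Sha^2(\widehat T)^D \to 0$$
with finite image in the middle. The local pairings at places $v \notin S$ vanish on $\Sha^2_{\omega}(\widehat T) = \Sha^2_S(\widehat T)$ since the local classes $\alpha_v$ are zero there, so the middle arrow extends canonically through the projection $\prod_{v \in \Omega_K} T(K_v) \to \prod_{v \in S} T(K_v)$. Finally, the equality $A_S(T)=A(T)$ ensures that the preimage of $\overline{T(K)}_S$ in $\prod_{v \in \Omega_K} T(K_v)$ is precisely $\overline{T(K)}$, which delivers the required exactness on the left and completes the proof.
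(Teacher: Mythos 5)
Your proof is correct, but for the key equality $\Sha^2_S(\widehat T)=\Sha^2_{\omega}(\widehat T)$ you take a genuinely different route from the paper. The paper deduces it indirectly: it writes the exact sequence of the preceding theorem for $S$ and for a larger $S'$, observes that the right-hand map $\Sha^2(\widehat T)^D \to \Sha^2(\widehat T)^D$ is the identity and that the left-hand map $A_{S'}(T)\to A_S(T)$ is an isomorphism of finite groups by Proposition~\ref{flasqueaf}, and concludes by a diagram chase that the middle map $\Sha^2_{S'}(\widehat T)^D \to \Sha^2_S(\widehat T)^D$ is an isomorphism, whence the claim in the limit. You instead prove the equality directly at the level of classes: dualizing a flasque resolution to $0\to\widehat T\to\widehat R\to\widehat Q\to 0$, using $\Sha^2_{\omega}(\widehat R)=\Sha^2(\widehat R)$ for the permutation module $\widehat R$ (Proposition~\ref{sha1sha1omega}(iv)), and killing the local lifts in $H^1(K_v,\widehat Q)$ for $v\notin S$ via the vanishing of $H^1(K_v,Q)$ at good-reduction places together with the local duality of Proposition~\ref{dualitelocaletore}. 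Your argument is more explicit and establishes $\Sha^2_S(\widehat T)=\Sha^2_{\omega}(\widehat T)$ independently of the duality theorem, at the cost of invoking the dual resolution and local duality; the paper's argument is shorter once the theorem is available. One small point: you need $S$ to contain the bad-reduction places of $Q$ rather than just those of $T$; for the usual flasque resolution split by the splitting field of $T$ these coincide, and the paper makes the same implicit identification in Proposition~\ref{flasqueaf}, so your hedge about enlarging $S$ is harmless but worth making explicit if you want the statement verbatim for every $S$ containing the bad-reduction places of $T$.
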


\begin{proof} Soit $S'$ ensemble fini de places avec $S \subset S'$.
On a alors un diagramme commutatif
$$
\begin{CD}
0 @>>> A_{S'}(T) @>>> \Sha^2_{S'}(K,\widehat T)^D @>>> 
\Sha^2(K,\widehat T)^D @>>> 0 \cr
&& @VVV @VVV @VVV \cr
0 @>>> A_{S}(T) @>>> \Sha^2_{S}(K,\widehat T)^D @>>> 
\Sha^2(K,\widehat T)^D @>>> 0
\end{CD}
$$
La fl\`eche de droite est un isomorphisme,
la fl\`eche de gauche est une surjection de groupes finis. D\`es que 
$S$ contient toutes
les places de mauvaise r\'eduction de $T$, 
la proposition \ref{flasqueaf}  montre que
la projection $A_{S'}(T) \to A_{S}(T)$ est un isomorphisme.
On conclut qu'il en est de m\^eme pour la  fl\`eche  
verticale m\'ediane, et donc
que pour $S$ contenant toutes les places de mauvaise r\'eduction pour $T$,
on a $ \Sha^2_{S}(K,\widehat T)= \Sha^2_{\omega}(K,\widehat T)$.
\end{proof}

\begin{rema} 
{\rm
Il se peut tr\`es bien que les deux groupes 
$\Sha^2_{\omega}(\widehat T)$ et $\Sha^2(\widehat T)$ soient infinis, 
mais $\Sha^2(\widehat T)$ est toujours d'indice fini dans 
$\Sha^2_{\omega}(\widehat T)$ par le corollaire~\ref{coropresqueV}. 
Les deux groupes sont \'egaux 
pour un tore quasi-trivial, et plus pr\'ecis\'ement
\'egaux si et seulement si l'approximation faible vaut pour $T$.
Noter aussi qu'on ne peut pas comme dans 
le cas o\`u $K$ est un corps de nombres identifier le dual de 
$\Sha^2(\widehat T)$ avec 
$\Sha^1(T)$ car en g\'en\'eral $\Sha^2(\widehat T)$ contient 
une partie divisible non triviale.
}
\end{rema}

\medskip

Le th\'eor\`eme suivant est le substitut, 
dans le pr\'esent contexte, de la suite exacte
de Voskresenski\v{\i} pour les tores sur un corps de nombres 
(\cite{voskbook},  Chap. 4, Section 11.6, Th. page 120, ou encore
\cite{sansuc}, Th. 9.2.ii) et \cite{CTSflasque}, Prop. 9.5).

\begin{theo} \label{vosksansuc}
Soit $K=\C((t))(C)$ comme ci-dessus, et soit $T$ un $K$-tore.
Soit $A(T)$ le groupe fini mesurant le  d\'efaut d'approximation faible et $\Sha(T)$ 
le groupe fini mesurant le d\'efaut du principe de Hasse
pour les espaces principaux homog\`enes sous $T$.
On a une suite exacte de groupes finis
$$ 0 \to A(T) \to [\Sha^2_{\omega}(K,\widehat T)/\div]^D \to \Sha^1(K,T) \to  0.$$
\end{theo}

\begin{proof}
La d\'emonstration du corollaire ci-dessus \'etablit que le groupe  $\Sha^2(K,\widehat T)$ est d'indice fini dans
$\Sha^2_{\omega}(K,\widehat T)$. Leurs sous-groupes divisibles maximaux
co\"{\i}ncident donc. Comme $A(T)$ est fini, du corollaire \ref{coropresqueV}  on 
d\'eduit une suite exacte de groupes finis
$$0 \to A(T) \to [\Sha^2_{\omega}(K,\widehat T)/\div]^D \to [\Sha^2 (K,\widehat T)/\div]^D \to 0.$$
Le th\'eor\`eme \ref{dualitesha1tore} identifie le groupe $ [\Sha^2 (K,\widehat T)/\div]^D$
avec $\Sha^1(K,T)$.
\end{proof}

\bigskip

On a une inclusion  $\Sha^2_{\cyc}(\widehat T) \subset \Sha^2_{\omega}(\widehat T)$,
donc une application 
$$\Sha^2_{\cyc}(\widehat T) \to \Sha^2_{\omega}(\widehat T)/\div$$
qui induit une application
$$[ \Sha^2_{\omega}(\widehat T)/\div]^D \to \Sha^2_{\cyc}(\widehat T)^D.$$
On a un isomorphisme entre $\Sha^2_{\cyc}(\widehat T)$ et le sous-groupe $Br_{e}T_{c}$  
du groupe de Brauer non ramifi\'e d'une compactification lisse $T_{c} $ de  $T$ form\'e des \'el\'ements nuls en l'\'el\'ement neutre.
L'application compos\'ee
$$A(T) \to \Br_{e}T_{c}^D$$
est induite par l'accouplement de r\'eciprocit\'e 
(cf. \cite{dhsza2}, preuve du Th.~6.1.).
On va voir qu'en 
g\'en\'eral cette application n'est pas injective, en construisant
maintenant un exemple qui montre que,
contrairement au cas o\`u $K$ est un corps de nombres, le d\'efaut d'approximation
faible d'un $K$-tore n'est pas toujours contr\^ol\'e par son groupe de Brauer non 
ramifi\'e (cette assertion \'etant valide une fois la compatibilit\'e ci-dessus v\'erifi\'ee).

\begin{lem}
Il existe une courbe sur $\C((t))$ de corps de fonctions $K$, telle que $K$ 
poss\`ede deux extensions galoisiennes $K_1$, $K_2$ de groupes de Galois 
respectifs $\Z/4$ et $\Z/2$, v\'erifiant~: l'extension $K_1/K$ 
est totalement d\'ecompos\'ee en tout point ferm\'e de $C$ et 
l'extension $K_2/K$ a au moins un sous-groupe de d\'ecomposition non 
cyclique.
\end{lem}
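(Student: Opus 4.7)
The plan is to exploit the failure of Chebotarev over $K = \C((t))(C)$ already illustrated in Section~\ref{lescorps}. I would take $E_0/\C$ an elliptic curve equipped with a cyclic $4$-isogeny $\varphi\colon E' \to E_0$, set $C = E_0 \times_\C \C((t))$ and $K = \C((t))(C)$, and let $K_1$ be the function field of the \'etale $\Z/4$-cover $E' \times_\C \C((t)) \to C$. Exactly as in Section~\ref{lescorps}, for every finite extension $k'/\C((t))$ the group $E_0(k')$ is divisible (its formal group is the additive formal group in characteristic zero, while $E_0(\overline{k'}) = E_0(\C)$ is a divisible complex torus), so $E'(k') \to E_0(k')$ is surjective, and hence $K_1/K$ is totally d\'ecompos\'ee at every closed point of $C$.

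For $K_2$, the plan is to force a non-cyclic decomposition subgroup at one chosen closed point $v_0 \in C^{(1)}$ by taking a biquadratic extension $K_2 = K(\sqrt{f}, \sqrt{g})/K$; the statement of the lemma is naturally read with $\Gal(K_2/K) \simeq (\Z/2)^2$, since a genuine $\Z/2$-extension cannot have a non-cyclic decomposition subgroup. Concretely, I would fix a closed point $v_0$ of $C$ with residue field $\kappa_{v_0} \simeq \C((t))$, a local parameter $\pi_0$ at $v_0$, and a unit $u_0$ at $v_0$ whose residue in $\kappa_{v_0}$ is a non-square. Setting $f = \pi_0$ and $g = u_0$, a direct inspection of $K_{v_0} \simeq \C((t))((\pi_0))$ shows that $f$, $g$ and $fg$ are all non-squares in $K_{v_0}$, so the decomposition subgroup of $K_2/K$ at $v_0$ is the full Klein four group $(\Z/2)^2$, which is non-cyclic.

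The main obstacle will be to realise the two constructions on the same curve $C$ in a compatible way: one needs rational functions $\pi_0$ and $u_0$ on $C$ with the prescribed local behaviour at $v_0$, while retaining the constant elliptic structure needed for the $K_1$ argument. Both can be built from classical rational functions on $E_0$ (for instance from differences of Weierstrass abscissae, or from pullbacks of $t$), and one finally checks the linear disjointness $K_1 \cap K_2 = K$, which is automatic since $K_1/K$ is unramified on $C$ while $K_2/K$ genuinely ramifies at $v_0$. A last sanity check is that the divisibility argument producing the total d\'ecomposition of $K_1/K$ applies unchanged at $v_0$, so the two phenomena coexist at the same place.
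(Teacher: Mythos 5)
Your construction of $K_1$ coincides with the paper's (the cyclic $4$-isogeny $E'\to E_0$ of constant elliptic curves, with total decomposition at every closed point because $E'(\kappa_v)\to E_0(\kappa_v)$ is surjective); only your parenthetical justification is off: $E_0(\overline{k'})$ is not $E_0(\C)$, but what you actually need is that $E_0(k')$ is divisible, which holds because it is an extension of the divisible group $E_0(\C)$ by the uniquely divisible formal group, after which the dual isogeny gives surjectivity. For $K_2$ you take a genuinely different route, and it is the one that can actually be checked. The paper takes for $K_2$ the constant extension of a geometric $(\Z/2)^2$-cover of $E$ given by $\sqrt{x-x_0},\sqrt{y-y_0}$, said to be totally ramified at $m=(x_0,y_0)$; but at the closed point $v$ of $C$ above $m$ both functions are uniformizers whose ratio is a unit with residue in $\C^*$, hence a square in $\kappa_v=\C((t))$, so $(x-x_0)(y-y_0)$ is a square in $K_v$ and the decomposition group at that place has order at most $2$ (and indeed tame inertia over $\C$ is cyclic, so no $(\Z/2)^2$-cover of $E_{/\C}$ can be totally ramified at a point of $E(\C)$). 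Your choice $K_2=K(\sqrt{\pi_0},\sqrt{u_0})$ isolates exactly the point that matters: since $K_{v_0}^*/K_{v_0}^{*2}$ is generated by a uniformizer $\pi_0$ and by $t$, one of the three quadratic subextensions must be unramified at $v_0$ with unit part of non-square residue in $\C((t))$, i.e.\ of odd $t$-adic valuation; taking $u_0=t$ --- your ``pullback of $t$'' option, and note that no function coming from $E_0$ over $\C$ can serve as $u_0$ at such a point, since its value there lies in $\C^*$ --- makes $\pi_0$, $u_0$, $\pi_0u_0$ represent the three nontrivial classes, whence $D_{v_0}=(\Z/2)^2$. You are also right to read the $\Z/2$ of the statement as $(\Z/2)^2$, as the following lemma confirms. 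One small slip: your linear-disjointness argument by ramification overlooks the subfield $K(\sqrt{u_0})\subset K_2$, which is unramified at $v_0$; but linear disjointness is not part of the statement, and it follows anyway from the fact that $K_1$ is totally split at $v_0$ while no nontrivial subextension of $K_2$ is.
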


\begin{proof}
Soit $E$ la courbe elliptique sur $\C$ d'\'equation affine
$y^2=x(x-1)(x-2)$. Soit $E' \to E$ une isog\'enie de noyau $\Z/4$;
soit $X\to E$ un rev\^etement galoisien de groupe $(\Z/2)^2$, totalement 
ramifi\'e en un certain point $m \in E(\C)$, par exemple 
le rev\^{e}tement donn\'e par $\sqrt{x-x_{0}}, \sqrt{y-y_{0}}$, o\`u 
$y_{0}^2=x_{0}(x_{0}-1)(x_{0}-2)\neq 0$. Soient $K$, $K_1$, $K_2$ les
corps de fonctions respectifs de de $E \times_{\C} \C((t))$, 
$E'\times_{\C} \C((t))$, $X \times_{\C} \C((t))$. Alors $K_1/K$ est 
totalement d\'ecompos\'e en toute place de $v$ car le rev\^etement 
$E' \to E$ est \'etale au-dessus du corps alg\'ebriquement clos $\C$, 
donc totalement d\'ecompos\'e partout. De plus, en la place $v$ 
de $E \times_{\C} \C((t))$ correspondant au point ferm\'e au-dessus 
de $m$, l'extension $(K_2)_v/K_v$ est totalement ramifi\'ee, donc son 
groupe de Galois reste $(\Z/2)^2$.
\end{proof}

\begin{lem}
Il existe un module galoisien fini $M$ sur $K$ tel que le groupe 
$\Sha^1_{\omega}(M)$ contienne strictement $\Sha^1(M)+\Sha^1_{\cyc}(M)$.
\end{lem}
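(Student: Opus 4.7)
\emph{Plan of proof.} Let $L = K_1 K_2$ and $G = \Gal(L/K)$, and set $H = \Gal(L/K_1)$, which by the previous lemma is isomorphic to $(\Z/2)^2$. Since $K_1/K$ est totalement d\'ecompos\'ee en tout point ferm\'e de $C$, every decomposition group $D_v \subset G$ at a closed point $v \in C^{(1)}$ lies in $H$, while the non-cyclic decomposition group of $K_2/K$ gives $D_{v_0} = H$ at a specific place $v_0$. Furthermore, for $v$ outside a finite set, $D_v$ is a cyclic subgroup of $H$, by the basic structure of decomposition at unramified or étale-type places over $\C((t))$.

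\emph{The target.} We shall construct a finite $G_K$-module $M$ (with action factoring through $G$) and a class $\alpha \in H^1(G,M) \subset H^1(K,M)$ such that:
\begin{itemize}
\item[(i)] $\alpha|_H \in H^1(H,M)$ is a non-zero class in the subgroup $\Sha^1_{\cyc}(H,M) := \ker[H^1(H,M) \to \prod_{C} H^1(C,M)]$, where $C$ runs over cyclic subgroups of $H$;
\item[(ii)] $\alpha$ itself does not lie in $\Sha^1_{\cyc}(K,M)$: it restricts non-trivially to some procyclic subgroup of $G_K$ (typically generated by an element of $G$ with non-trivial projection to $\Gal(K_1/K)$).
\end{itemize}
Assuming such $\alpha$ is built, (i) forces $\alpha|_{D_v} = 0$ for every $v \neq v_0$ with cyclic $D_v$, hence $\alpha \in \Sha^1_{\omega}(K,M)$; and $\alpha|_{D_{v_0}} = \alpha|_H \neq 0$, so $\alpha \notin \Sha^1(K,M)$. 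Finally, (ii) means $\alpha \notin \Sha^1_{\cyc}(K,M)$, while a decomposition $\alpha = \beta + \gamma$ with $\beta \in \Sha^1,\ \gamma \in \Sha^1_{\cyc}$ would force $\alpha|_{D_{v_0}} = \gamma|_{D_{v_0}}$ to lie in the image of $\Sha^1_{\cyc}(K,M) \to \Sha^1_{\cyc}(H,M)$. The final step is to show, using the presence of the $\Z/4$-factor coming from $K_1/K$, that $\alpha|_H$ can be chosen outside this image.

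\emph{Choice of $M$.} The module should be a finite $G$-module (say an extension of $\Z/4$ or of the augmentation ideal $I_H \subset \Z[H]$) carrying a non-trivial $N := \Gal(L/K_2)$-twist in addition to its natural $H$-module structure. A computable example is obtained by starting from $I_H$, for which $H^1(H, I_H) \simeq \Z/4$ contains a subgroup of order $2$ of $\Sha^1_{\cyc}(H, I_H)$-classes (computed from the standard resolution of $\Z$ by $\Z[H]$-modules), and then twisting by the quadratic character of the unique quadratic subfield of $K_1/K$ to produce a $G$-module structure that genuinely mixes $H$ and $N$.

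\emph{Main obstacle.} The crux is that the naive lift: inflate $\bar\alpha \in \Sha^1_{\cyc}(H, I_H)$ to $H^1(G, I_H)$ through the projection $G \to G/N \simeq H$, automatically yields $\alpha \in \Sha^1_{\cyc}(G, I_H)$, as one checks by a short cocycle calculation (for $g = (h,n) \in H \times N$ one has $\alpha(g) = \alpha(h) = (h-1)m = (g-1)m$, since $g$ acts through $h$). Obtaining (ii) therefore requires a genuinely non-split $G$-module structure on $M$: this is the Grunwald--Wang-type twist involving the $\Z/4$-extension $K_1/K$. Showing that, for the twisted $M$, the image of $\Sha^1_{\cyc}(K,M) \to H^1(H,M)$ does \emph{not} exhaust $\Sha^1_{\cyc}(H,M)$ — so that the class $\alpha|_H$ escapes it — is the heart of the argument and the only delicate cohomological verification; all other points reduce to the structural facts (i) and (ii) already recorded.
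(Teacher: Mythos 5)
Your reduction to finite group cohomology is the same as the paper's: same fields $K_1,K_2$, same group $G=\Z/4\times(\Z/2)^2$ with $H=\Gal(L/K_1)$ containing all decomposition groups, and the same three structural observations (restriction to $H=G_{v_0}$ controls $\Sha^1$, restriction to the cyclic subgroup $D=\Gal(L/K_2)$ of order $4$ controls $\Sha^1_{\cyc}$, restriction to the cyclic subgroups of $H$ controls $\Sha^1_{\omega}$), as well as the correct final reduction $\Sha^1_{\cyc}(K,M)\subset\Sha^1_{\cyc}(G,M)$ via the vanishing of $\Sha^1_{\cyc}(L,M)$ for the split module. But the proof stops exactly where it has to deliver: no module $M$ is actually produced, and you yourself label the decisive verification (``the heart of the argument and the only delicate cohomological verification'') as not done. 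A plan with the key step outstanding is not a proof, so there is a genuine gap.

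Moreover, the direction you sketch for filling it is off target. You look for $M$ built from $I_H$ and then worry that a character twist (``Grunwald--Wang-type'') is needed to defeat the obstacle you correctly identify, namely that any module inflated from $H$ gives only classes in $\Sha^1_{\cyc}(G,M)$. The paper needs no twist: it takes $M=\Ker[\Z/16[G]\to\Z/16]$, the augmentation ideal of the \emph{full} group $G$ reduced mod $16$. Then (Sansuc, p.~22) $H^1(G,M)\simeq\Z/16$ and restriction to a subgroup $C$ is the natural surjection $\Z/16\to\Z/|C|$; hence $\Sha^1_{\omega}(G,M)=2\cdot(\Z/16)\simeq\Z/8$ (cyclic subgroups of $H$ have order $\le 2$), while both $\Sha^1(G,M)$ (restriction to $H$, of order $4$) and $\Sha^1_{\cyc}(G,M)$ (restriction to the cyclic $D\simeq\Z/4$) land in $4\cdot(\Z/16)\simeq\Z/4$. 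The class $2\in\Z/16$ then witnesses the strict inclusion. It is precisely the presence of the order-$4$ cyclic subgroup $D$ acting nontrivially on $M$ that makes this work, and this is what your proposed $I_H$-based module cannot see without the explicit construction you leave open.
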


\begin{proof} On commence par prendre (via le lemme pr\'ec\'edent)
des extensions $K_1$, $K_2$ de $K$, de groupes
de Galois respectifs $D=\Z/4$ et $H=(\Z/2)^2$, v\'erifiant~:

a) $K_1/K$ est totalement d\'ecompos\'ee en tout point ferm\'e $v$
de $C$.

b) $K_2/K$ a au moins un sous-groupe de d\'ecomposition (disons en $v_0$)
non cyclique, i.e. \'egal \`a $H$.

\smallskip

\smallskip

Les extensions galoisiennes  $K_1$ et 
$K_2$  de $K$ sont lin\'eairement disjointes vu que $K_1$ est totalement 
d\'ecompos\'ee en $v_0$ tandis que $K_2$ est inerte.

Posons $L=K_1.K_2$, c'est une extension galoisienne de $K$ de 
groupe $G=D \times H \simeq  \Z/4 \times (\Z/2)^2$, o\`u $H=H\times 1$
est le groupe de Galois de $L$ sur $K_{1}$, et $D=1 \times D$
est le groupe de Galois de $L$ sur $K_{2}$.

Rappelons  que presque tous les groupes de d\'ecomposition  sont 
cycliques. L'hypoth\`ese a) donne que tous les groupes de d\'ecomposition 
dans $G$ sont inclus dans $H$.

Soit $M$ un $G$-module fini. Soit $\alpha \in H^1(G,M)$. Alors~:

\smallskip

i) Pour qu'on ait  $\alpha \in \Sha^1(G,M)$, il faut que la restriction 
$\alpha_H$ \`a $H$ soit nulle car le groupe de d\'ecomposition
$G_{v_0}$ est $H$. 

ii) Pour qu'on ait $\alpha \in \Sha^1_{\cyc}(G,M)$, il faut que 
$\alpha_D$ soit nulle car $D$ est un sous-groupe cyclique de $G$.

iii) Pour qu'on ait  $\alpha \in \Sha^1_{\omega}(G,M)$, il suffit  
que l'on ait  $\alpha_{D'}=0$ pour tout sous-groupe cyclique $D'$ de $H$, 
puisque pour presque toute $v$, $G_v$ est l'un de ces sous-groupes.

\smallskip

En prenant alors pour $M$ le noyau de l'augmentation $\Z/16[G] \to \Z/16$,
on obtient (cf. Sansuc \cite{sansuc}, p. 22)~: 

$$\Sha^1(G,M) \subset \Z/4 ;  \quad \Sha^1_{\cyc}(G,M) \subset \Z/4 ; \quad 
 \Sha^1_{\omega}(G,M)=\Z/8.$$
On a de fait
 $H^1(G,I_{G})=\Z/16$, le premier  et le second groupe sont
 dans $4.(\Z/16)$, et le troisi\`eme est $2.(\Z/16)$.
Ceci  montre que $\Sha^1_{\omega}(G,M)$ contient strictement le groupe
$\Sha^1_{\cyc}(G,M) + \Sha^1(G,M)$. 
On observe alors qu'on a $\Sha^1_{\cyc}(K,M) \subset \Sha^1_{\cyc}(G,M)$ car 
l'image de tout \'el\'ement de $\Sha^1_{\cyc}(K,M)$ dans 
$H^1(L,M)$ est dans $\Sha^1_{\cyc}(L,M)$, qui est nul 
(rappelons que le module galoisien $M$ est d\'eploy\'e
par $L$), donc appartient \`a $H^1(G,M) \cap \Sha^1_{\cyc}(K,M)=
\Sha^1_{\cyc}(G,M)$. Soit alors $\alpha$ un \'el\'ement de 
$\Sha^1_{\omega}(G,M)$ qui n'est pas dans $\Sha^1_{\cyc}(G,M) + \Sha^1(G,M)$,
alors $\alpha$ ne peut pas s'\'ecrire $\beta+\gamma$ avec 
$\beta \in \Sha^1_{\cyc}(K,M)$ et $\gamma \in \Sha^1(K,M)$, sinon 
$\gamma$ serait aussi dans $\Sha^1(G,M)$ d'apr\`es ce qui pr\'ec\`ede.
Finalement $\Sha^1_{\omega}(K,M)$ contient strictement 
$\Sha^1_{\cyc}(K,M) + \Sha^1(K,M)$.
\end{proof}

\begin{prop} \label{contrexwa}
Il existe un $K$-tore $T$ tel que le complexe 
$$0 \to \overline{{T(K)}} \to \prod_{v \in \Omega_K} T(K_v) \to 
\Sha^2_{\cyc}(\widehat T)^D$$ 
ne soit pas exact.
\end{prop}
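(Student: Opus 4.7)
La strat\'egie propos\'ee sera de convertir la non-\'egalit\'e du lemme pr\'ec\'edent (\`a savoir $\Sha^1_\omega(K,M) \supsetneq \Sha^1(K,M) + \Sha^1_\cyc(K,M)$) en une non-\'egalit\'e analogue en degr\'e~$2$ pour un r\'eseau galoisien $N$, qui servira de module des caract\`eres du tore cherch\'e. On reprend $K$, $L$, $G=\Gal(L/K)$ et $M$ comme dans les deux lemmes pr\'ec\'edents. On commencera par choisir une surjection de $G$-modules $P \to M$ avec $P = \Z[G]^r$ libre de rang fini, et l'on posera $N = \Ker(P \to M)$; le tore $T$ sera alors d\'efini par $\widehat T = N$, et appara\^{\i}tra explicitement comme un quotient de $R_{L/K}\G_m^r$ par un $K$-sch\'ema en groupes finis.

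La suite exacte $0 \to N \to P \to M \to 0$ fournira le m\'ecanisme de transfert. Comme $P$ est un module de permutation, Shapiro et Hilbert~90 donnent $H^1(K,P) = H^1(L,\Z)^r = 0$, et de m\^eme $H^1(K_v,P) = 0$ pour toute place $v \in C^{(1)}$. Le morphisme connexant $\delta : H^1(K,M) \hookrightarrow H^2(K,N)$ sera donc injectif, ainsi que chacun des $\delta_v$. On en d\'eduira des injections $\delta : \Sha^1_?(K,M) \hookrightarrow \Sha^2_?(K,N)$ pour $? \in \{\emptyset, \omega, \cyc\}$, et la non-\'egalit\'e du lemme se traduira imm\'ediatement en
$$\delta(\Sha^1_\omega(K,M)) \not\subset \delta(\Sha^1(K,M)) + \delta(\Sha^1_\cyc(K,M)) \subset \Sha^2_\omega(K,\widehat T).$$

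L'obstacle principal sera de v\'erifier que cette non-inclusion persiste modulo le sous-groupe divisible maximal de $\Sha^2_\omega(K,\widehat T)$. Les images de $\delta$ \'etant de $16$-torsion, et la suite exacte longue plongeant $\Sha^2_?(K,N)/\delta(\Sha^1_?(K,M))$ dans le groupe divisible $\Sha^2(K,P) = \Sha^1(L,\Q/\Z)^r$ (corollaire~\ref{shadivisible}), on cherchera \`a exploiter l'isomorphisme $\Sha^2_?(G,N) \cong \Sha^1_?(G,M)$ (d\'eduit de la nullit\'e des $\hat H^i(G,P)$ pour $P$ coinduit), ainsi que l'inflation injective $H^2(G,N) \hookrightarrow H^2(K,N)$ (justifi\'ee par $H^1(L,N)=0$), pour exhiber un repr\'esentant explicite d'une classe non triviale de $\Sha^2_\omega(K,\widehat T)$ modulo $\Sha^2(K,\widehat T) + \Sha^2_\cyc(K,\widehat T) + \div$. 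Un choix judicieux de la pr\'esentation $P \to M$ (\'eventuellement en ajoutant un facteur direct de permutation bien choisi) pourra \^etre n\'ecessaire pour s'assurer que les classes consid\'er\'ees ne sont pas absorb\'ees par la partie divisible.

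Une fois acquise cette non-\'egalit\'e, le th\'eor\`eme~\ref{vosksansuc} identifiera $A(T)$ \`a l'annulateur de l'image de $\Sha^2(K,\widehat T)/\div$ dans $[\Sha^2_\omega(K,\widehat T)/\div]^D$. La discussion du paragraphe~\ref{neuf} fournira alors un noyau non trivial pour la compos\'ee
$$A(T) \longrightarrow [\Sha^2_\omega(K,\widehat T)/\div]^D \longrightarrow \Sha^2_\cyc(K,\widehat T)^D,$$
ce qui, par dualit\'e, traduira exactement la non-exactitude du complexe de l'\'enonc\'e.
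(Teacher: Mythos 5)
Votre construction du tore co\"{\i}ncide avec celle du texte~: vous prenez pour $\widehat T$ le noyau d'une surjection d'un module de permutation sur $M$, ce qui revient \`a poser $T=R/F$ avec $R$ quasi-trivial et $F$ le dual de Cartier de $M$. En revanche, le point crucial de votre strat\'egie --- montrer que l'image de $\Sha^1_{\omega}(K,M)$ par le cobord $\delta$ n'est pas contenue dans $\Sha^2(K,\widehat T)+\Sha^2_{\cyc}(K,\widehat T)+{\rm Div}$ --- est pr\'ecis\'ement l'\'etape que vous ne d\'emontrez pas~: vous la signalez vous-m\^eme comme l'obstacle principal et n'en donnez qu'un plan conditionnel (\emph{on cherchera \`a exploiter}, \emph{un choix judicieux pourra \^etre n\'ecessaire}). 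Or cette \'etape ne r\'esulte pas formellement de l'injectivit\'e de $\delta$ et de l'in\'egalit\'e $\Sha^1_{\omega}(M)\neq\Sha^1(M)+\Sha^1_{\cyc}(M)$~: l'image r\'eciproque d'une somme n'est pas la somme des images r\'eciproques, et rien n'exclut a priori qu'une classe $\delta(m)$ se d\'ecompose en $a+b$ avec $a\in\Sha^2(\widehat T)$ et $b\in\Sha^2_{\cyc}(\widehat T)$ sans que $a$ ni $b$ ne proviennent de $H^1(K,M)$. C'est l\`a une lacune r\'eelle de la r\'edaction.

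Elle est toutefois comblable. D'une part le terme ${\rm Div}$ est inoffensif~: $\Sha^2(\widehat T)$ est d'indice fini dans $\Sha^2_{\omega}(\widehat T)$ (corollaire~\ref{coropresqueV}), donc contient le sous-groupe divisible maximal de ce dernier. D'autre part, si $\delta(m)=a+b$ comme ci-dessus, les images de $a$ et de $-b$ dans $H^2(K,\widehat R)$ co\"{\i}ncident (l'image de $\delta(m)$ y est nulle), et cette classe commune appartient \`a $\Sha^2_{\cyc}(\widehat R)$, qui est nul puisque $\widehat R$ est un module de permutation~; l'exactitude de $H^1(K,M)\to H^2(K,\widehat T)\to H^2(K,\widehat R)$ donne alors $a=\delta(m_a)$ et $b=\delta(m_b)$, puis la nullit\'e de $H^1(K_v,\widehat R)$ (resp. de $H^1(\langle g\rangle,\widehat R)$ pour les sous-groupes procycliques) force $m_a\in\Sha^1(M)$ et $m_b\in\Sha^1_{\cyc}(M)$, d'o\`u $m\in\Sha^1(M)+\Sha^1_{\cyc}(M)$ par injectivit\'e de $\delta$. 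Une fois ce point acquis, votre conclusion par dualit\'e via le th\'eor\`eme~\ref{vosksansuc} est correcte. Notez enfin que la preuve du texte contourne enti\`erement cette discussion en travaillant du c\^ot\'e dual~: on choisit $f\in\Sha^1_S(M)^D$ non nul mais nul sur $\Sha^1(M)+\Sha^1_{\cyc}(M)$, on le rel\`eve via la suite exacte de la proposition~\ref{finipoit} en une famille $(f_v)\in\prod_{v\in S}H^1(K_v,F)$ puis, gr\^ace \`a $H^1(K_v,R)=0$, en une famille de points locaux $(t_v)\in\prod_{v\in S}T(K_v)$~; l'orthogonalit\'e \`a $\Sha^2_{\cyc}(\widehat T)\simeq\Sha^1_{\cyc}(M)$ et la non-appartenance \`a l'adh\'erence de $T(K)$ se lisent alors directement sur $f$, sans qu'aucune question de divisibilit\'e n'intervienne.
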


\begin{proof} 
Pour tout module galoisien fini $M$, le groupe $\Sha^1_{\omega}( M)$ est fini (voir Prop. \ref{finitudeShai}).
Comme ce groupe est la r\'eunion des $\Sha^1_{S}( M)$ pour $S$ fini,
il existe un ensemble fini $S$ de places telle que  $\Sha^1_{S}(K,M)=\Sha^1_{\omega}( M)$.

Soit $M$ le module galoisien fini du lemme pr\'ec\'edent, et soit $S$ fini tel que
$\Sha^1_{S}(M)=\Sha^1_{\omega}( M)$. 
Soit $F$ le dual de $M$.  On plonge $F$ dans un tore quasi-trivial $R$ et 
on note $T:=R/F$ le tore quotient. 
On a donc une suite exacte de modules galoisiens de type fini
$$ 0 \to \widehat T \to \widehat P \to M \to 0.$$
Quitte \`a agrandir $S$,
on peut d'apr\`es ce qui a \'et\'e vu plus haut, supposer 
$\Sha^2_{S}(\widehat T)=\Sha^2_{\omega}(\widehat T)$.

On a un diagramme commutatif
$$
\begin{CD}
 \Sha^1_{\cyc}( M) @>>>  \Sha^1_{\omega}( M) \cr
 @VVV @VVV \cr
 \Sha^2_{\cyc}(\widehat T)@>>> \Sha^2_{\omega}(\widehat T)$$
\end{CD}
$$
o\`u les fl\`eches horizontales sont des inclusions.
Comme   les groupes $H^1(K,\widehat R)$, $H^1(K_v,\widehat R)$ et 
$\Sha^2_{\cyc}(\widehat R)$ sont nuls,
les fl\`eches verticales sont injectives, et $\Sha^1_{\cyc}( M)  \oi \Sha^2_{\cyc}(\widehat T) $.

Comme le groupe fini
$\Sha^1_{\omega}(M)$ contient strictement $\Sha^1(M)+\Sha^1_{\cyc}(M)$,
il existe     un \'el\'ement non 
nul $f$ de $\Sha^1_S(M)^D$ qui est non nul, mais s'annule sur 
$\Sha^1(M)+\Sha^1_{\cyc}(M)$. 

\smallskip

La suite exacte 
$$H^1(K,F) \to \prod_{v \in S} H^1(K_v,F) \to \Sha^1_S(M)^D \to 
\Sha^1(M)^D \to 0$$
(proposition~\ref{finipoit}) permet donc de relever $f$ en un 
$(f_v) \in \prod_{v \in S} H^1(K_v,F)$. Comme $H^1(K_v,R)=0$, 
ce $(f_v)$ se rel\`eve lui-m\^eme en un \'el\'ement 
$(t_v)$ de $\prod_{v \in S}
T(K_v)$. On a un diagramme commutatif  de complexes
$$
\begin{CD}
T(K) @>>> H^1(K,F) \cr
@VVV @VVV \cr
\prod_{v \in S} T(K_v) @>>> \prod_{v \in S} H^1(K_v,F) \cr 
@VVV @VVV \cr 
\Sha^2_S(\widehat T)^D @>>> \Sha^1_{S}(M)^D
\end{CD}
$$

Maintenant, on observe que~: 

a) Le compos\'e 
$$\prod_{v \in S} T(K_v) \to \Sha^2_S(\widehat T)^D = \Sha^2_{\omega}(\widehat T)^D 
\to \Sha^2_{\cyc}(\widehat T)^D$$
s'annule sur
l'\'el\'ement $(t_v)$, 
car $f$ s'annule sur 
$\Sha^1_{\cyc}(M)$, et on a aussi un isomorphisme
$\Sha^1_{\cyc}( M)  \oi \Sha^2_{\cyc}(\widehat T) $.

\smallskip

b) L'\'el\'ement $(t_v)$ n'est pas dans l'adh\'erence de $T(K)$. 
Sinon son image $(f_v) \in \prod_{v \in S} H^1(K_v,F)$ proviendrait de 
$H^1(K,F)$, et d'apr\`es la proposition~\ref{finipoit} l'image 
$f$ de  $(f_v)$ dans $\Sha^1_{\omega}(M)^D$ serait nulle, ce qui 
n'est pas le cas par construction.

\end{proof}

\begin{rema} 
{\rm A contrario, on voit facilement que si $T$ est un tore admettant 
une r\'esolution comme ci-dessus mais avec $M=\widehat F$ v\'erifiant 
$\Sha^1_{\omega}(M)=\Sha^1(M)+\Sha^1_{\cyc}(M)$, alors le complexe 
$$0 \to \overline{{T(K)}} \to \prod_{v \in \Omega_K} T(K_v) \to
\Sha^2_{\cyc}(\widehat T)^D$$
est bien exact.
}
\end{rema}

\begin{cor}
Il existe un $K$-tore flasque $Q$ tel que le complexe
$$H^1(K,Q) \to \bigoplus_{v \in \Omega} H^1(K_v,Q) \to H^1(K,\widehat Q)^D$$
ne soit pas exact.
\end{cor}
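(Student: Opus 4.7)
The plan is to deduce this corollary from Proposition~\ref{contrexwa} by means of a flasque resolution of the $K$-tore $T$ furnished there. Take such a $T$ and choose a flasque resolution
$$1 \to Q \to R \to T \to 1,$$
with $R$ quasi-trivial and $Q$ flasque; I claim this $Q$ provides the desired example.

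Since $R$ is quasi-trivial, Hilbert 90 gives $H^1(K,R)=0$ and $H^1(K_v,R)=0$ at every place; the coboundaries of the corresponding long exact sequences produce surjections $T(K)\twoheadrightarrow H^1(K,Q)$ and $T(K_v)\twoheadrightarrow H^1(K_v,Q)$. By Proposition~\ref{flasqueaf}(a), $H^1(K_v,Q)$ vanishes outside a finite set of places, so we actually obtain a surjection $\prod_v T(K_v)\twoheadrightarrow\bigoplus_v H^1(K_v,Q)$, and by Proposition~\ref{flasqueaf}(b) this identifies $\coker[H^1(K,Q)\to\bigoplus_v H^1(K_v,Q)]$ with $A(T)=\prod_v T(K_v)/\overline{T(K)}$. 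Dualising the flasque resolution yields $0\to\widehat T\to\widehat R\to\widehat Q\to 0$; since $\widehat R$ is un module de permutation, $H^1(K,\widehat R)=0$, and the coboundary embeds $H^1(K,\widehat Q)$ into $H^2(K,\widehat T)$. A standard argument (see \S\ref{sgpbrauer} and \cite[Prop.~9.5]{CTSflasque}) identifies the image with $\Sha^2_{\cyc}(K,\widehat T)$.

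Under these identifications, exactness of the complex
$$H^1(K,Q)\to\bigoplus_v H^1(K_v,Q)\to H^1(K,\widehat Q)^D$$
becomes equivalent to injectivity of the induced homomorphism $A(T)\to\Sha^2_{\cyc}(K,\widehat T)^D$. But this injectivity is in turn equivalent to the exactness at the middle term of
$$0\to\overline{T(K)}\to\prod_v T(K_v)\to\Sha^2_{\cyc}(K,\widehat T)^D,$$
which is precisely what Proposition~\ref{contrexwa} forbids. Hence the complex for $Q$ cannot be exact.

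The only non-formal ingredient is the compatibility of the two Tate pairings: the local pairing $H^1(K_v,Q)\times H^1(K_v,\widehat Q)\to\Br K_v$ of Proposition~\ref{dualitelocaletore} must match the pairing $T(K_v)\times H^2(K_v,\widehat T)\to\Br K_v$ under the coboundary maps induced by the flasque resolution on both factors. This is a routine cup-product diagram chase that relies on the compatibility of cup products with connecting homomorphisms, and it is the one place in the argument requiring real (but standard) verification.
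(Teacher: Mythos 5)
Your argument is correct and is essentially the paper's own proof: both take the flasque resolution $1\to Q\to R\to T\to 1$ of the torus $T$ from Proposition~\ref{contrexwa}, use $H^1(K,R)=H^1(K_v,R)=0$ and weak approximation for the quasi-trivial $R$ to identify $\Coker[H^1(K,Q)\to\bigoplus_v H^1(K_v,Q)]$ with $A(T)$, identify $H^1(K,\widehat Q)$ with $\Sha^2_{\cyc}(K,\widehat T)$ (the paper phrases this as $H^1(K,\widehat Q)=\Sha^1_{\cyc}(\widehat Q)\simeq\Sha^2_{\cyc}(\widehat T)$), and conclude by the non-exactness of the middle column; the paper runs this as an explicit diagram chase rather than your cokernel/injectivity reformulation, and it likewise takes for granted the compatibility of the local pairings with the coboundaries that you flag at the end.
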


On n'a donc pas de suite exacte de Poitou-Tate ``compl\`ete'' pour un tore
dans notre cadre.

\begin{proof}
Soient $K$ et $T$ comme dans la proposition~\ref{contrexwa}. 
Consid\'erons une r\'esolution flasque 
$$1 \to Q \to R \to T \to 1 \to 0$$
de $T$. 
Notons d\'ej\`a que le tore $Q$ \'etant flasque, on a, pour $S$ 
ensemble fini de places contenant toutes les places de mauvaise 
r\'eduction 
$$H^1(K,\widehat Q)=\Sha^1_{\cyc}(\widehat Q)=\Sha^1_{\omega} (\widehat Q)=
\Sha^1_S(\widehat Q)$$
car un tore flasque qui est d\'eploy\'e par une extension cyclique est 
facteur direct d'un tore quasi-trivial. Par ailleurs on a un 
diagramme commutatif \`a lignes exactes 
$$
\begin{CD}
R(K) @>>> T(K) @>>> H^1(K,Q) @>>> 0 \cr
@VVV @VVV @VVV \cr
\prod_{v \in S} R(K_v) @>>> \prod_{v \in S} T(K_v) @>>> \prod_{v \in S}
H^1(K_v,Q) @>>> 0 \cr
&& @VVV @VVV \cr
&&   \Sha^2_{\cyc}(\widehat T)^D   @>{\simeq}>> 
\Sha^1_{\cyc}(\widehat Q)^D @>>> 0
\end{CD}
$$
et la proposition~\ref{contrexwa} nous dit qu'on peut prendre $S$ 
tel que la suite 
$$\overline{{T(K)}}_S \to \prod_{v \in S} T(K_v) \to 
\Sha^2_{\cyc}(\widehat T)^D$$
ne soit pas exacte. Comme $R(K)$ est dense dans $\prod_{v \in S} R(K_v)$,
une chasse au diagramme donne alors que la derni\`ere colonne ne 
peut pas \^etre exacte,
\end{proof}

\begin{rema}
{\rm Comme nous l'a signal\'e T. Szamuely, la th\'eorie du corps 
de classes fonctionne bien sur un corps du type $k(C)$, avec $k$ 
quasi fini, \`a condition d'avoir la condition suppl\'ementaire 
$H^1(k,J)=0$, o\`u $J$ est la jacobienne de la courbe $C$ (voir \cite{rimwhap}
et \cite{milneADT}, Chapitre I, Appendice A), 
condition qui est v\'erifi\'ee
si $k$ est fini mais pas toujours si $k=\C((t))$.
La proposition~\ref{finipoit} permet de faire le lien avec cette th\'eorie~: en 
effet cette propostion combin\'ee au th\'eor\`eme~\ref{fini} 
pour $F=\mu_n$ donne une suite exacte~:
$$K^*/K^{*^n} \to \prod^{'}_v K_v^*/K_v^{*^n} \to \Ga_K^{\rm ab}/n 
\to \Sha^2(\mu_n) \to 0$$
o\`u $\Ga_K$ est le groupe de Galois absolu de $K$ et 
$\prod^{'}$ d\'esigne le produit restreint. Soit $I_K=\prod^{'} 
K_v^*$ le groupe des id\`eles de $K$ et $C_K=I_K/K^*$ son groupe des
classes d'id\`eles. La nullit\'e 
de $H^1(k,J)$ implique celle de $\Sha^2(\mu_n)$ (propositions~\ref{cohocourbes} 
iii) et \ref{Sha2Kmun} ii)), ce qui permet sous cette hypoth\`ese d'en 
d\'eduire facilement qu'on a une application de r\'eciprocit\'e 
$C_K \to \Ga_K^{\rm ab}$ d'image dense, comme dans le cas classique 
o\`u $k$ est fini. Sans la nullit\'e de $\Sha^2(\G_m)$, on voit qu'il subsiste
une obstruction \`a la densit\'e de l'application de r\'eciprocit\'e 
(obstruction qui dispara\^{\i}t si on remplace $K$ par son 
extension ab\'elienne maximale 
totalement d\'ecompos\'ee en tous les points ferm\'es de $C$).
}
\end{rema}

\section{Groupes alg\'ebriques lin\'eaires}\label{GAL}

 Soit $K=\C((t))(C)$ le corps des fonctions d'une courbe $C$ projective, lisse,
  g\'eom\'etriquement connexe sur le corps $\C((t))$.
 On note $K_{v}$ le compl\'et\'e de $K$ en une place $v \in \Omega$, o\`u 
 $\Omega$ est l'ensemble des places associ\'ees aux points ferm\'es de $C$.

 \subsection{Rappels} \label{rapsect}
 
 Dans \cite{CTGiPa}, on a d\'ecrit de nombreuses propri\'et\'es des
 groupes alg\'ebriques lin\'eaires connexes sur divers corps de ``dimension'' 2,
 mais le cas des corps $\C((t))(C)$ n'y avait pas \'et\'e pris en consid\'eration.
 Ceci fut rectifi\'e dans l'expos\'e de Parimala \cite{Parimala}.
 Les \'enonc\'es qui suivent sont  pour l'essentiel contenus
 dans le  \S 6 et  le \S 7 de son expos\'e.
 Leur d\'emonstration utilise les propri\'et\'es
 des corps $\C((t))(C)$ rappel\'es au \S \ref{lescorps} du pr\'esent article :
 propri\'et\'e $C_{2}$, et co\"{\i}ncidence de l'exposant et de l'indice
 des alg\`ebres simples centrales sur un tel corps.

 Soit $G$ un $K$-groupe alg\'ebrique lin\'eaire connexe.

\medskip

$\bullet$  
Les quotients $G(K)/R$ et $G(K_{v})/R$ par la $R$-\'equivalence  sont finis. Si $G$ est simplement connexe,
ils sont triviaux.

$\bullet$ Pour $G$ semisimple simplement connexe sur un corps $K=\C((t))(C)$
et $S$ un ensemble fini de points ferm\'es $v$   de $C$, on a l'approximation faible :
 l'application
naturelle
$$G(K) \to \prod_{v\in S} G(K_{v})$$
a une image dense. Pour le voir, il suffit  d'adapter la d\'emonstration de \cite[Thm. 4.7]{CTGiPa}.  

$\bullet$ Si $G$ est simplement connexe, on a $H^1(K,G)=0$ et $H^1(K_{v},G)=0$.
C'est un cas particulier  de \cite[Thm. 1.2]{CTGiPa}.

$\bullet$ Un $K$-groupe  semisimple simplement connexe sans facteur de type $A_{n}$
est une $K$-vari\'et\'e $K$-rationnelle.
C'est un cas particulier de  \cite[Thm. 1.5; Thm. 4.3]{CTGiPa}.

\subsection{Obstruction au principe local-global pour les espaces principaux homog\`enes}

Le th\'eor\`eme suivant (qui s'\'etend imm\'ediatement au cas non r\'eductif
par trivialit\'e de la cohomologie galoisienne des unipotents en 
caract\'eristique z\'ero) est l'analogue du th\'eor\`eme 8.5 
de Sansuc \cite{sansuc}.

On utilise ici   les notations introduites aux  paragraphes \ref{sgpbrauer} et \ref{accouplements}.
\begin{theo}
Si $G$ est un $K$-groupe lin\'eaire connexe r\'eductif, 
l'ensemble $\cyr{X}^1(K,G)$ est fini.

On a un accouplement
$$\cyr{X}^1(K,G) \times \cyr{B}(G) \to  \Q/\Z,$$
fonctoriel et multiplicatif en $G$, 
compatible \`a toute application $\cyr{X}^1(K,G) \to \cyr{X}^2(K,\mu)$
d\'eduite d'une $K$-isog\'enie $H \to G$ de noyau $\mu$.

Cet accouplement est non d\'eg\'en\'er\'e \`a gauche. 
 Si $G$ est semisimple,
l'accouplement est non d\'eg\'en\'er\'e des deux c\^ot\'es.

Pour $E$ un espace principal homog\`ene sous $G$,
ayant des points dans tous les $K_{v}$, on a un isomorphisme naturel
$ \theta : \cyr{B}(E) \to  \cyr{B}(G) $ et  pour $\alpha \in \cyr{B}(E)$,
l'obstruction au principe de Hasse sur $E$ associ\'ee \`a $\alpha$,
soit  $\rho_{E}(\alpha) \in \Q/\Z$, 
co\"{\i}ncide avec $<E,\theta{\alpha}> \in \Q/\Z$.
\end{theo}

\begin{proof}

(esquisse) La m\'ethode \'etant analogue \`a celle 
de \cite{sansuc}, nous allons nous contenter d'en rappeler les grandes 
\'etapes. On trouvera une autre m\'ethode, bas\'ee sur l'hypercohomologie 
des complexes de tores, dans \cite{diego}; cette autre m\'ethode 
donne en outre le fait que le noyau \`a droite de l'accouplement est 
le sous-groupe divisible maximal de $\cyr{B}(G)$.

\smallskip

Une suite exacte
$$1 \to \mu \to H \times_{K} P \to G \to 1,$$
avec $H$ semisimple simplement connexe, $P$ un $K$-tore quasitrivial et
$\mu$ un $K$-groupe fini central dans $ H \times P$ d\'efinit ce que 
l'on appelle un {\it rev\^etement sp\'ecial}
d'un $K$-groupe r\'eductif $G$. 
Pour tout $K$-groupe r\'eductif $G$, il existe un
entier $n>0$ tel que $G^n$ admette un tel rev\^etement (\cite{sansuc}, 
Lemme~1.10).
 Compte tenu des annulations $H^1(K,P)=0$ ($P$ quasitrivial)
et $H^1(K,H)=0$ (vu ci-dessus),
une telle suite exacte induit une application 
$$ \cyr{X}^1(K,G) \to \cyr{X}^{2}(K,\mu)$$
dont le noyau est trivial.

D'apr\`es Sansuc 
on a un isomorphisme (\cite[Cor. 7.4]{sansuc})
$$ H^1(K,\widehat{\mu}) \oi \Ker [\Br_{a}(G) \to \Br_{a}(H\times_{K}P)]$$
et  un isomorphisme (\cite[Lemme 6.9]{sansuc})
$$   \Br_a(H\times_{K}P)]\oi H^2(K,\widehat{P}) ,$$
ceci sur tout corps $K$ et de fa\c con fonctorielle en le corps $K$.
On obtient ainsi une suite exacte
$$ 0 \to \cyr{X}^1(K,\widehat{\mu}) \to \cyr{B}(G) \to  \cyr{X}^{2}(K,\widehat{P}).$$

 Le th\'eor\`eme \ref{fini} fournit une dualit\'e de groupes finis
 $$\cyr{X}^1(K,\widehat{\mu}) \times  \cyr{X}^{2}(K,\mu) \to \Q/\Z.$$
 
 On d\'efinit alors comme dans \cite{sansuc}, Prop. 8.1)
  un accouplement
 $$ \cyr{X}^1(K,G) \times  \cyr{B}(G) \to \Q/\Z$$
 \`a la Brauer--Manin, en utilisant le fait que 
 $B(E)=B(G)$ pour $E$ un espace principal homog\`ene de $G$ 
(\cite{sansuc}, Lemme~6.8). L'accouplement ainsi d\'efini
 est compatible avec celui ci-dessus via $\cyr{X}^1(K,\widehat{\mu}) \to \cyr{B}(G)$ (preuve identique \`a celle de \cite{sansuc}, Th~8.5).

On obtient en fin de compte que l'accouplement 
$$ \cyr{X}^1(K,G) \times  \cyr{B}(G) \to \Q/\Z$$
est non d\'eg\'en\'er\'e \`a gauche. Pour $E$ un espace principal homog\`ene
sous un $K$-groupe r\'eductif, l'obstruction de Brauer-Manin associ\'ee \`a 
$\Br(E)$ (et m\^eme \`a son sous-groupe $B(E)$)
est la seule obstruction au principe de Hasse. 

\smallskip

Si enfin $G$ est suppos\'e semi-simple, alors son rev\^etement 
simplement connexe 
$$1 \to \mu \to \widetilde G \to G \to 1$$
induit, compte tenu des propri\'et\'es des corps $K$ et $K_v$ 
mentionn\'ees en \ref{rapsect}, 
une bijection $\Sha^1(K,G) \oi \Sha^2(K,\mu)$ (cas particulier de 
\cite{gonz}, Th. 5.7.i). 
D'apr\`es Sansuc (\cite{sansuc}, Th. 7.2) et compte tenu du fait 
que $\Br_a (\widetilde G)=0$ sur tout corps $K$ (car $\widetilde G$ est 
semi-simple simplement connexe), on a alors un isomorphisme 
$\Sha^1(K,\hat \mu) \oi \cyr{B}(G)$, ce qui implique que l'accouplement 
$$ \cyr{X}^1(K,G) \times \cyr{B}(G) \to \Q/\Z$$
est dans ce cas non d\'eg\'en\'er\'e des deux c\^ot\'es.

\end{proof}


\begin{thebibliography}{99}

\bibitem{BRL} S. Bosch, M. Raynaud et W. L\"utkebohmert, {\it N\'eron models}, 
  Ergebnisse der Mathematik und ihrer Grenzgebiete, 3. Folge,  {\bf  21},  Springer-Verlag, Berlin, 1990.

\bibitem{santa} J.-L. Colliot-Th\'el\`ene, 
Birational invariants, purity and the Gersten conjecture, in 
{\it K-Theory and Algebraic Geometry: Connections with Quadratic Forms and Division Algebras}, AMS Summer Research Institute, Santa Barbara 1992, ed. W. Jacob and A. Rosenberg, Proceedings of Symposia in Pure Mathematics {\bf 58},
Part I (1995) 1--64.

\bibitem{CTBordeaux}  J.-L. Colliot-Th\'el\`ene, Groupe de Brauer non ramifi\'e d'espaces homog\`enes de tores, Journal de th\'eorie des nombres de Bordeaux, \`a para\^{\i}tre.

\bibitem{CTGiPa}  J.-L. Colliot-Th\'el\`ene, P. Gille et R. Parimala,
Arithmetic of linear algebraic groups over $2$-dimensional geometric fields,
Duke Math. J. {\bf 121} (2004) 285--341.

\bibitem{CTPaSu0} J.-L. Colliot-Th\'el\`ene,  R. Parimala et V. Suresh, 
Patching and local-global principles for homogeneous spaces over function fields of $p$-adic curves,  
Commentarii Math. Helv. {\bf 87} (2012) no. 4, 1011--1033.

\bibitem{CTPaSu} J.-L. Colliot-Th\'el\`ene,  R. Parimala et V. Suresh, Lois de r\'eciprocit\'e sup\'erieures et points rationnels, pr\'epublication, 
novembre 2013.

\bibitem{ctcras} J.-L. Colliot-Th\'el\`ene et J.-J. Sansuc, 
Cohomologie des groupes de type multiplicatif sur les sch\'emas r\'eguliers,
C. R. Acad. Sc. Paris {\bf 287} (1978) 449--452.

\bibitem{CTSflasque}  J.-L. Colliot-Th\'el\`ene et J.-J. Sansuc, Homogeneous spaces under flasque tori : applications, J. Algebra {\bf 106}  (1987) 148--205.

\bibitem{EW} H. Esnault et O. Wittenberg, 
On the cycle class map for zero-cycles over local fields,
avec un appendice par S. Bloch, pr\'epublication, f\'evrier 2014.


\bibitem{GiPi} P. Gille et A. Pianzola,  Isotriviality and \'etale cohomology of Laurent polynomial rings, Journal of Pure and Applied Algebra  {\bf 212} (2008) 780--800.

\bibitem{gonz} C. Gonzal\'ez-Avil\'es~: Quasi-abelian crossed modules
and nonabelian cohomology, pr\'epublication 2013.

\bibitem{sga4} A. Grothendieck, M. Artin, J.-L. Verdier~:
{\em S\'eminaire de g\'eom\'etrie alg\'ebrique}, Th\'eorie des topos et
cohomologie \'etale des sch\'emas (SGA 4), III,
Lecture notes in Math. {\bf 305}, Springer-Verlag 1973


\bibitem{GrBr} A. Grothendieck, Le groupe de Brauer I, II, III, in {\it Dix expos\'es sur la cohomologie des sch\'emas}, Masson \& North-Holland, Paris, 1968.

\bibitem{HaSchSz} D. Harari, C. Scheiderer et T. Szamuely, Weak approximation for tori over $p$-adic function fields, \`a para\^{\i}tre dans Internat. 
Math. Res. Notices. 

\bibitem{dhsza1} D. Harari et T. Szamuely, Arithmetic duality theorems 
for 1-motives, J. reine angew. Math. {\bf 578} (2005), 93--128. Corrigenda,
  J. reine angew. Math. {\bf 632} (2009), 233--236.

\bibitem{dhsza2} D. Harari et T. Szamuely, Local-global principles for 
1-motives, Duke Math. J.  {\bf 143}, no 3 (2008), 531--557.

\bibitem{HaSz}  D. Harari et T. Szamuely, Local-global questions for 
tori over $p$-adic function fields, \`a para\^{\i}tre dans J. Algebraic 
Geom.

\bibitem{HHK1} D. Harbater, J. Hartmann et D. Krashen,  
Application of patching to quadratic forms and central simple algebras, Inv. math. {\bf 178} 2009) 231--263.
 

\bibitem{HHK2} D. Harbater, J. Hartmann, D. Krashen, Local-global principles for torsors over arithmetic curves, pr\'epublication 2011. 

\bibitem{HHK3} D. Harbater, J. Hartmann, D. Krashen, Local-global principles for Galois cohomology,  \`a para\^{\i}tre dans Comment. Math. Helv. 

\bibitem{Harder} G. Harder, Halbeinfache Gruppenschemata \"uber Dedekindringen, Invent. math. {\bf 4} (1967), 165-191.

\bibitem{diego} D. Izquierdo, Th\'eor\`emes de dualit\'e arithm\'etique 
et approximation faible pour les corps de fonctions sur des corps 
locaux sup\'erieurs, pr\'epublication 2014.

 

\bibitem{milneEC} J. S. Milne, { \it{\'Etale Cohomology}}, Princeton University Press, 1980.

\bibitem{milneADT} J. S. Milne, { \it Arithmetic duality theorems}, Second Edition, Kea Books (2006).

\bibitem{Parimala} R. Parimala, Arithmetic of linear algebraic groups over two-dimensional fields, Expos\'e au congr\`es international, Hyderabad, India, 2010.

\bibitem{rimwhap} D. S. Rim, G. Whaples,
Global norm-residue map over quasi-finite field, Nagoya Math. J. {\bf 27}
(1966), 323--329.


\bibitem{Saltman} D. Saltman, Division algebras over $p$-adic curves, 
J. Ramanujan Math. Soc. {\bf 12} (1997) 25--47.
Correction to Division algebras over p-adic curves,
J. Ramanujan Math. Soc. {\bf 13} (1998), no. 2, 125¿-129. 


\bibitem{sansuc} J-J. Sansuc, Groupe de Brauer et arithm\'etique
des groupes alg\'ebriques lin\'eaires sur un corps de nombres,
J. reine angew. Math. {\bf 327} (1981), 12--80.

 

\bibitem{CG} J-P. Serre, Cohomologie  Galoisienne, Cinqui\`eme \'edition, r\'evis\'ee
et compl\'et\'ee, Springer L.N.M. {\bf 5} (1994).
\bibitem{Sk} A. N. Skorobogatov, {\it Torsors and rational points},
Cambridge Tracts in Mathematics {\bf 144}, 
Cambridge University Press, Cambridge, 2001.

\bibitem{vosk} V. E.  Voskresenski\v {\i}, Propri\'et\'es birationnelles des groupes alg\'ebriques lin\'eaires (en russe), {\em Izv. Akad. Nauk SSSR Ser. Mat.} 34 (1970) 3--19.

\bibitem{voskbook} V. E. Voskresenski\v{\i}, {\it 
Algebraic groups and their birational invariants},
Translations of Mathematical Monographs {\bf 179}, American Math. Soc. (1998).



\end{thebibliography}
\end{document}